\title{A microlocal Cauchy problem through a crossing point of Hamiltonian flows}
\author{Kenta Higuchi}
\address{(K. Higuchi) Faculty of Education, Gifu University, 1-1 Yanagido, Gifu, 501-1193, Japan}
\email{higuchi.kenta.b7@f.gifu-u.ac.jp}
\author{Vincent Louatron}
\address{(V. Louatron) Graduate school of science and engineering,
Ritsumeikan University,
1-1-1 Nojihigashi, Kusatsu, Shiga, 525-8577, Japan}
\email{gr0607hr@ed.ritsumei.ac.jp}
\author{Kouichi Taira}
\address{(K. Taira) Faculty of Mathematics, Kyushu University, 744, Motooka, Nishi-ku, Fukuoka, 819-0395, Japan}
\email{taira.kouichi.800@m.kyushu-u.ac.jp}
\date{}
\begin{document}
    \begin{spacing}{1.2}
        \begin{abstract}
In this paper, we consider $2\times 2$ matrix-valued pseudodifferential equations in which the two characteristic sets intersect with finite contact order. We show that the asymptotic behavior of its solution changes dramatically before and after the crossing point, and provide a precise asymptotic formula. This is a generalization of the previous results for matrix-valued Schr\"odinger operators and Landau-Zener models.
The proof relies on a normal form reduction and a detailed analysis of a simple first-order system.

\end{abstract}
        \maketitle

        \section{Introduction}

\subsection{Main results}\label{SubS:first}

In this paper, we discuss the existence, uniqueness, and asymptotic behavior of solutions of a system of pseudodifferential equations\footnote{The precise definitions of the terms used in this paragraph are provided in Section \ref{S:Preliminaries}.}:
\begin{align}\label{Eq:MicrolocalCauchyProblem}
	\mathscr{P}u\equiv 0 \quad \text{m.l. on}\,\, \Omega
\end{align}
where
\begin{itemize}

\item $\Omega \subset \Rr^2=T^*\Rr$ is a connected open neighborhood of $(0,0)$;

\item $\ms{P}$ is a $2\times 2$-matrix-valued semiclassical pseudo-differential operator of the form 
\begin{equation*}
	\ms{P} =
	\begin{pmatrix}
		P_1 & hQ_1\\
		hQ_2 & P_2
	\end{pmatrix} \quad \text{acting on } L^2(\Rr,\Cc^2),
\end{equation*}
defined as the Weyl quantization (see \eqref{Eq:Weyl}) of the matrix-valued symbol\footnote{Here, we assume that the symbol $p$ is bounded on $\Rr^2$. However, we can still apply our main theorem for unbounded symbols such as $p_j(x,\xi)=\xi^2+V_j(x)$ or $p_j\in S(m)$ for an order function $m$ (see \cite[\S 4.4]{Zwo}) 
since all the statements below are microlocalized on a compact subset of the phase space $T^*\Rr$.}
\begin{equation}\label{Eq:Def-PDO}
p(x,\xi) :=
\begin{pmatrix}
	p_1(x,\xi) & hq_1(x,\xi)\\
	hq_2(x,\xi) & p_2(x,\xi)
\end{pmatrix} \in \mc{M}_2(C^\infty_b).
\end{equation}
\end{itemize}
Throughout the article, we assume that the following two conditions hold.
\begin{condition}[Real-principal type]\label{C:RealPrincipal} 
For each $j=1,2$, the function $p_j$ is of real-principal type at $(0,0)$, that is, 
$p_j$ is real-valued and 
    \begin{equation}
      p_j(0,0)=0,\quad\partial p_j (0,0)\neq0.
    \end{equation}
\end{condition}

\begin{condition}[Finite contact order]\label{C:ContactOrder}
    There exists $m\ge1$ such that $H_{p_1}^kp_2(0,0)=0$ for all $k\in\{0,\ldots,m-1\}$ and
    \begin{equation}\label{Eq:Contact}
        H_{p_1}^{m}p_2(0,0)\neq0.
    \end{equation}
    Here, we write $H_{p_1}:=\partial_{\xi} p_1\partial_{x}-\partial_{x}p_1\partial_{\xi}$. In the following, the \textbf{transversal case} refers to the situation where $m=1$ and the \textbf{tangential case} to situations where $m\geq 2$.
\end{condition}
\noindent
The domain $\Omega$ is chosen to be sufficiently small so that the following conditions hold.
\begin{itemize}
\item The characteristic sets 
\begin{align*}
\Lambda_j:=\{(x,\xi)\in \Rr^2\mid p_j(x,\xi)=0\}\quad (j=1,2)
\end{align*}
are submanifolds and intersect only at $(0,0)$ inside $\Omega$. Geometrically, the intersection is of finite contact order $m$ by Condition \ref{C:ContactOrder}. We refer to $m$ as \textbf{the contact order} of $\Lambda_1$ and $\Lambda_2$;

\item For $j=1,2$, the portion $(\Omega\cap\Lambda_j)\setminus \{(0,0)\}$ is divided into two connected components $\gamma_j^{\inc}$ and $\gamma_j^{\out}$, where the Hamiltonian flow generated by $H_{p_j}$ is incoming to the crossing point $(0,0)$ on $\gamma_j^\inc$ while it is outgoing from $(0,0)$ on $\gamma_j^\out$. See Figure~\ref{Fig:notations}.
\end{itemize}

\begin{figure}[h]
    \begin{minipage}{\linewidth}
    {\centering
	\includegraphics[width=\linewidth,page=2]{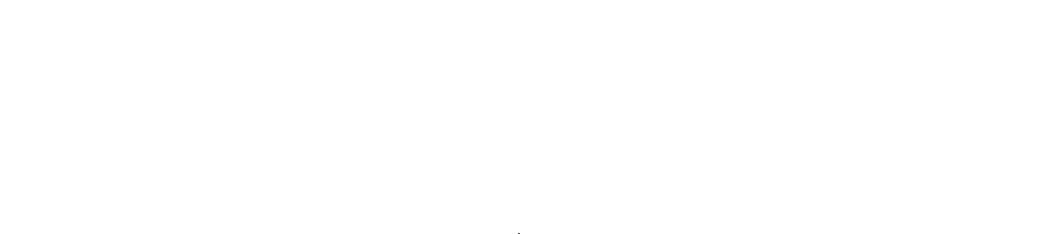}
    \caption{Hamiltonian flows and notations}\label{Fig:notations}
    }
    \end{minipage}
\end{figure}

In the setting above, the uniqueness, existence, and asymptotic behavior of microlocal solutions \textbf{away from the crossing point $(0,0)$} are well known and can be analyzed using the usual WKB method. In particular, it is known that microlocal solutions of \eqref{Eq:MicrolocalCauchyProblem} are microlocalized on $\Lambda_1 \cup \Lambda_2$ --- meaning that their wavefront set is included in the characteristic sets, see Definition \ref{Def:MicrolocallySmall}. 
On each connected component $\gamma_j^\dir$ $(j=1,2$, $\dir\in\{\inc,\out\})$, one has:
\begin{itemize}
\item (Uniqueness) The space of microlocal solutions near 
$\gamma_j^\dir$ 
is \textbf{one-dimensional}. 
More precisely, $u\equiv0$ microlocally near a point of $\gamma_j^\dir$ implies that $u\equiv0$ microlocally on all $\gamma_j^\dir$.

\item (Existence) 
There exists a non-trivial solution $w_j^{\dir}$ of \eqref{Eq:MicrolocalCauchyProblem} microlocally on $\gamma_j^{\dir}$, 
uniquely determined up to a multiplicative constant.

\item (Asymptotic behavior) The solution $w_j^{\dir}$ is a WKB state (or Lagrangian distribution) associated with the Lagrangian submanifold $\Lambda_j$.
Generically for $p_1$ and $p_2$, we can write $w_j^{\dir}(x)=\sigma_j^{\dir}(x)e^{\frac{i}{h}\phi_j(x)}$, where $\phi_j$ is a solution to the eikonal equation $p_j(x,\phi_j'(x))=0$ and the amplitude $\sigma_j^{\dir}$ satisfies a transport equation. 
\end{itemize}
According to the uniqueness and existence written above, for each solution $u$ of \eqref{Eq:MicrolocalCauchyProblem} microlocally on $\Omega$, there exist four complex numbers $\alpha_1^\inc$, $\alpha_2^\inc$, $\alpha_1^\out$, $\alpha_2^\out\in\mathbb{C}$  such that 
\begin{align}\label{Eq:Defalpha}
    u\equiv \alpha_j^{\dir}w_j^{\dir}
    \quad\text{ m.l. on } \gamma_j^{\dir}\quad (j=1,2,\ \dir\in\{\inc,\out\}).
\end{align} 
See Proposition \ref{Prop:matrixmicsol} for the more rigorous statements, proved by standard WKB analysis.

The purpose of this paper is to study the equation \eqref{Eq:MicrolocalCauchyProblem} \textbf{near the crossing point $(0,0)$}.
More precisely, we consider the following problems.

\underline{\textbf{Problem 1}}: How many microlocal solutions are there for \eqref{Eq:MicrolocalCauchyProblem}?

\underline{\textbf{Problem 2}}: 
What is the relationship between the coefficients $\alpha_j^{\dir}$?

\noindent
Problem $1$ is solved by the following theorem.

\begin{thm}[Existence and uniqueness of the microlocal Cauchy problem]\label{Thm:ExistenceAndUniqueness}
In the above setting,
the following holds: 

\noindent$(i)$ $($Uniquness$)$
    Let $u$ be a microlocal solution of $\ms{P}u=0$ on $\Omega$. 
    If there exist two points $\rho_1^\inc\in\gamma_1^\inc$ and $\rho_2^\inc\in\gamma_2^\inc$ such that $u\equiv 0$ microlocally at $\rho_1^\inc$ and $\rho_2^\inc$, then 
    \begin{equation*}
        u\equiv0\quad\text{m.l. on }\Omega.
    \end{equation*}

\noindent$(ii)$ (Existence) Fix two WKB states $w_j^{\inc}$ ($j=1,2$), 
which are tempered and satisfy \eqref{Eq:MicrolocalCauchyProblem}  
microlocally
on $\gamma_j^{\inc}$. 
Then, for each $\alpha_1^{\inc},\alpha_2^{\inc}\in\mathbb{C}$, there exists a tempered $u$ such that
\begin{align}\label{eq:msPCauchy}
\ms{P}u\equiv 0\quad \text{m.l. on }\Omega,\quad u\equiv \alpha_{j}^{\inc}w_j^{\inc}\quad \text{m.l. on }\gamma_j^{\inc}\quad (j=1,2).
\end{align}
We refer to the constants $\alpha_1^\inc$ and $\alpha_2^\inc$ appearing in the second identity \eqref{eq:msPCauchy} as the microlocal Cauchy data of the system \eqref{Eq:MicrolocalCauchyProblem}.

\end{thm}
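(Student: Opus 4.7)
The plan is to combine standard real-principal-type propagation away from the crossing with a microlocal normal form reduction near $(0,0)$ to a simple first-order model ODE system, whose microlocal Cauchy problem can then be studied directly. Since, as recalled in the introduction, any microlocal solution of $\ms{P}u\equiv 0$ is automatically microlocalized on $\Lambda_1\cup\Lambda_2$ and is, up to a scalar, a WKB state on each branch $\gamma_j^{\dir}$, the content of the theorem is concentrated at the single point $(0,0)$.

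For uniqueness $(i)$, I would first propagate the assumed vanishing: applying scalar propagation of singularities along the Hamiltonian flows of $p_1$ and $p_2$, with the coupling $hQ_j$ absorbed as an $O(h)$ perturbation, the hypothesis $u\equiv 0$ at $\rho_j^{\inc}$ yields $u\equiv 0$ microlocally on all of $\gamma_j^{\inc}$. Next, near $(0,0)$, I would conjugate $\ms{P}$ by an elliptic Fourier integral operator chosen so that the scalar parts take a model form compatible with Condition \ref{C:ContactOrder}, typically $p_1=\xi$ and $p_2=\xi+cx^m$ with $c\neq 0$; the off-diagonal structure and the $O(h)$ size of the couplings are preserved by such a conjugation. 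On the resulting model, $\ms{P}u\equiv 0$ becomes a coupled first-order ODE system in $x$, and the task reduces to showing that its only tempered microlocal solution with data vanishing on both incoming branches is zero. Combined with propagation on the outgoing branches, this gives $u\equiv 0$ m.l. on $\Omega$.

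For existence $(ii)$, I would proceed symmetrically. After reducing $\ms{P}$ to the same normal form near $(0,0)$, I would solve the reduced first-order ODE system in $x$ with initial data matching the prescribed WKB data $\alpha_j^{\inc}w_j^{\inc}$ on the incoming branches; the $O(h)$ coupling guarantees tempered growth, and the construction is arranged so that the resulting function has the prescribed microlocal behavior on $\gamma_1^{\inc}\cup\gamma_2^{\inc}$. Conjugating back by the Fourier integral operator produces the desired $u$.

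The main obstacle is the direct analysis of the reduced model at the crossing, especially in the tangential case $m\geq 2$, where $\Lambda_1$ and $\Lambda_2$ become tangent at $(0,0)$ so that neither elementary propagation nor stationary phase applies there. One must establish $h$-uniform bounds on the fundamental matrix of the coupled ODE system that are sharp enough both to rule out non-trivial tempered solutions with vanishing incoming data (for $(i)$) and to prescribe arbitrary incoming WKB data while keeping the output tempered (for $(ii)$). This is the "detailed analysis of a simple first-order system" announced in the abstract and constitutes the technical heart of the argument; by contrast, the construction of the normal form Fourier integral operator is comparatively routine, but must be performed with care to preserve the off-diagonal matrix structure and the order of the couplings.
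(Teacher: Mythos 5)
Your outline matches the paper's strategy essentially step for step: propagation of singularities (or, equivalently, the one‑dimensionality of the microlocal solution space along each branch away from $(0,0)$) combined with a Fourier integral operator normal form taking $p_1$ to $\xi$ and $p_2$ to $a_0(x,\xi)(\xi-f(x))$ with $f$ vanishing to order $m$, followed by a Duhamel/Neumann-series analysis of the reduced first-order system using a degenerate stationary-phase estimate to get the $\|K_j\|_{L^\infty\to L^\infty}=\mathcal O(h^{1/(m+1)})$ bound that drives both existence and uniqueness. The only point where your description is slightly looser than the paper is the normal form itself — one cannot make $\kappa^*p_2$ exactly $\xi+cx^m$ but only $a_0(x,\xi)(\xi-f(x))$ with an elliptic factor $a_0$, which the paper peels off via the diagonal FIO $\mathcal F=\operatorname{diag}(F_1,F_2)$ and the factor $\operatorname{diag}(1,A)$; but you correctly identify the reduced model and where the real work (the $h$‑uniform estimates on the fundamental matrix near the degenerate crossing) lies.
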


In particular, the space of microlocal solutions on $\Omega$ of \eqref{Eq:MicrolocalCauchyProblem} is \textbf{two-dimensional}.
The term microlocal Cauchy data is taken from \cite{BFRZ}, where a microlocal Cauchy problem involving a scalar Hamiltonian with a hyperbolic fixed point is studied.


We now turn to Problem $2$. Fix two WKB states $w_j^{\dir}$ ($j=1,2$), which are tempered and satisfy \eqref{Eq:MicrolocalCauchyProblem} microlocally on $\gamma_j^{\dir}$.
Thanks to Theorem \ref{Thm:ExistenceAndUniqueness} (\textit{ii}), for given $\alpha_1^{\inc},\alpha_2^{\inc}\in\mathbb{C}$, we can find a solution $u$ of \eqref{eq:msPCauchy}. As stated above in \eqref{Eq:Defalpha},
there exist $\alpha_1^{\out},\alpha_2^{\out}\in\mathbb{C}$ such that
\begin{align}\label{eq:outgoing-data}
u\equiv \alpha_{j}^{\out}w_j^{\out}\quad \text{m.l. on }\gamma_j^{\out}\quad (j=1,2).
\end{align}
By virtue of Theorem \ref{Thm:ExistenceAndUniqueness} $(i)$, the constants $\alpha_1^{\out},\alpha_2^{\out}\in\mathbb{C}$ are unique modulo $\BigO{h^\infty}$. Since the equation \eqref{eq:msPCauchy} is linear, we can find a $2\times 2$ matrix $T$, called \textbf{the transfer matrix} such that
\begin{equation*}
    \begin{pmatrix}
        \alpha_1^\out\\
        \alpha_2^\out
    \end{pmatrix}
    =T
    \begin{pmatrix}
        \alpha_1^\inc\\
        \alpha_2^\inc
    \end{pmatrix},
\end{equation*}
which can be defined modulo $\BigO{h^{\infty}}$. Our next theorem describes the asymptotic behavior of the transfer matrix. Here, we remark that the transfer matrix $T=T(w_1^\inc,w_2^\inc,w_1^\out,w_2^\out)$ depends on the choice of the WKB states $w_j^{\dir}$ ($j=1,2$, $\dir\in \{\inc,\out\}$), as described in \eqref{Eq:TransferMatrixChangeWKB} in the example below. To simplify the connection formula, we will choose these WKB states as in the definition below.

\begin{definition}\label{def:normalized}
\noindent$(i)$ Let $v_j$ be a microlocal solution to the scalar equation $Pv_j\equiv 0$ m.l. on $\Omega$. We say that $v_j$ is \textbf{normalized} if
\begin{equation}\label{eq:FBI-normal}
    \frac {e^{-i\Theta(p_j)/2}}{\sqrt{2\pi h}}\int_{\Rr}e^{-\frac{x^2}{2h}}v_j(x)dx=1+\BigO{h},
\end{equation}
where we put
\begin{equation}\label{eq:def-Theta}
    \Theta(p_j):=\left\{
    \begin{aligned}
    &-\arctan\left(\frac{\partial_x p_j(0,0)}{\partial_\xi p_j(0,0)}\right)
    &&(\partial_\xi p_j(0,0)\neq0),\\
    &-\frac{\pi}2&&(\partial_\xi p_j(0,0)=0).
    \end{aligned}\right.
\end{equation}

\noindent$(ii)$ Let $w_j^{\dir}$ ($j=1,2$, $\dir\in \{\inc,\out\}$) be a microlocal solution to the equation \eqref{Eq:MicrolocalCauchyProblem} microlocally on $\gamma_j^{\dir}$. The solution $w_{j}^{\dir}$ is said to be \textbf{normalized} if
\begin{align}\label{eq:WKB-normal}
w_j^\dir\equiv v_j\mathbf{e}_j+\BigOO{h}{L^2(\Rr;\Cc^2)}\quad \text{m.l. on } \gamma_j^{\dir},
\end{align}
where the pair of $\mathbf{e}_1={}^t(1,0)$ and $\mathbf{e}_2={}^t(0,1)$ is the canonical basis of $\Cc^2$, and $v_j$ is a normalized microlocal solution to the scalar equation $P_jv_j\equiv0$ m.l. on $\Omega$.
\end{definition}

\begin{rmk}
Normalized solutions indeed exist (Propositions~\ref{prop:scalarmicsol} and \ref{Prop:matrixmicsol}) and are unique up to $\BigO{h}$. 
The ambiguity involving an $\BigO{h}$-term comes from that in \eqref{eq:FBI-normal} and \eqref{eq:WKB-normal}.

\end{rmk}

We now state our second result which deals with Problem $2$. The symbol $\delta_{m,1}$ stands for Kronecker's delta: $\delta_{m,1}=1$ for $m=1$ and $\delta_{m,1}=0$ for $m\geq 2$.

\begin{thm}[Microlocal connection formula]\label{Th:LSM}
Suppose that the WKB solutions $w_j^{\dir}$ are normalized in the sense of Definition \ref{def:normalized}.
Then, the transfer matrix $T=T(w_1^\inc,w_2^\inc,w_1^\out,w_2^\out)$ satisfies the asymptotic formula
    \begin{equation}\label{eq:asymptotics-T}
        T=\begin{pmatrix}
            1&0\\0&1
        \end{pmatrix}
        -ih^{\frac1{m+1}}\begin{pmatrix}
            0&\omega_1 q_1(0,0)\\ \omega_2q_2(0,0)&0
        \end{pmatrix}
        +\BigO{h^{\frac2{m+1}}\left(\log\frac1h\right)^{\delta_{m,1}}},
    \end{equation}
    where the $q_j$ are the anti-diagonal terms of $p$ defined in \eqref{Eq:Def-PDO} and the $\omega_j$ are given by
    \begin{align*}
        &\omega_1=2\mu_m\left(\frac{-\operatorname{sgn}(sH_{p_1}^m p_2(0,0))\pi}{2(m+1)}\right)\bm{\Gamma}\left(\frac{m+2}{m+1}\right)
        \left(\frac{|\partial_{(x,\xi)}p_2(0,0)|}{|\partial_{(x,\xi)}p_1(0,0)|}\frac{(m+1)!}{|H_{p_1}^mp_2(0,0)|}\right)^{\frac1{m+1}},\\
        &\omega_2=2\mu_m\left(\frac{-\operatorname{sgn}(sH_{p_2}^m p_1(0,0))\pi}{2(m+1)}\right)\bm{\Gamma}\left(\frac{m+2}{m+1}\right)
        \left(\frac{|\partial_{(x,\xi)}p_1(0,0)|}{|\partial_{(x,\xi)}p_2(0,0)|}\frac{(m+1)!}{|H_{p_2}^mp_1(0,0)|}\right)^{\frac1{m+1}}.
    \end{align*}
Here, the constant $s$ and the function $\mu_m=\mu_m(\theta)$ are defined by
    \begin{equation}\label{eq:def-s-tau-mu}
        s:=\operatorname{sgn}\prod_{j=1}^2\left(H_{p_j}(0,0),\tau(\Theta(p_j))\right)_{\Rr^2}
        ,\quad\tau(\theta)=(\cos\theta,\sin\theta),\qquad
        \mu_m(\theta):=\frac{e^{i\theta}+e^{i(-1)^{m+1}\theta}}2.
    \end{equation}
where $(\cdot,\cdot)_{\Rr^2}$ stands for the usual (Euclidean) inner product in $\Rr^2$.
\end{thm}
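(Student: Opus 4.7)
The plan is to combine a microlocal normal form reduction with the explicit analysis of a simple scalar model to extract the leading asymptotics, as already advertised in the abstract. In a first step, I would use Egorov's theorem to conjugate $\ms{P}$ by a suitable semiclassical Fourier integral operator $U$ (quantizing a symplectomorphism $\kappa$ that sends $\Lambda_1$ to $\{\xi=0\}$ and straightens $p_2$). By Condition \ref{C:ContactOrder}, the pullback of $p_2$ by $\kappa^{-1}$ satisfies $\tilde p_2(x,0)=c\, x^m/m!+O(x^{m+1})$ with $c=H_{p_1}^m p_2(0,0)$, and real principal type gives $\tilde p_2(x,\xi)=e(x,\xi)\bigl(\xi+ c x^{m+1}/(m+1)!+O(x^{m+2})\bigr)$ for an elliptic factor $e$. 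Absorbing $e$ into the coupling, the conjugated system takes the model form
\begin{equation*}
    \tilde{\ms{P}}=\begin{pmatrix} hD_x & h\tilde q_1\\ h\tilde q_2 & hD_x+V(x)\end{pmatrix}+\BigO{h^2},\qquad V(x)=\tfrac{c}{(m+1)!}x^{m+1}+\cdots
\end{equation*}
and the WKB states $w_j^\dir$ transform to explicit WKB states $\tilde w_j^\dir$ whose phases vanish at $0$.

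In a second step, I would solve the transfer problem for this reduced system by treating the off-diagonal coupling perturbatively. The diagonal part is transport along $\partial_x$, so Duhamel's formula expresses $(\tilde\alpha_1^\out,\tilde\alpha_2^\out)$ as $(\tilde\alpha_1^\inc,\tilde\alpha_2^\inc)$ plus integrals of the form
\begin{equation*}
    -i\,\tilde q_j(0,0)\int_{\Rr}e^{\frac{i}{h}\psi(x)}\chi(x)\,dx+\BigO{h^{2/(m+1)}(\log h^{-1})^{\delta_{m,1}}},
\end{equation*}
where $\psi(x)=\int_0^x V(y)\,dy=c\,x^{m+2}/((m+2)!)+\cdots$ is the phase difference between the two eikonal solutions near the crossing, and $\chi$ is a cut-off. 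A change of variable $x=h^{1/(m+1)}t$ reduces this to the canonical oscillatory integral $\int e^{i(c/((m+2)!))t^{m+2}}dt$, whose value is $\bm{\Gamma}\!\left(\frac{m+2}{m+1}\right)\cdot 2\mu_m(\cdot)$ times an explicit power of $c$. This produces the $h^{1/(m+1)}$ scaling, the Gamma factor, and the $\mu_m$ factor in \eqref{eq:asymptotics-T}, while the logarithmic loss in the remainder for $m=1$ arises from the standard Airy-type second-order correction.

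The third step, and the main technical obstacle, is to identify the \emph{geometric} constants $\omega_j$ from the algebraic ones obtained above. The ratio $|\partial_{(x,\xi)}p_2(0,0)|/|\partial_{(x,\xi)}p_1(0,0)|$ and the factorial $(m+1)!/|H_{p_1}^mp_2(0,0)|$ come from the Jacobian of $\kappa$ and the rescaling used to bring $\tilde p_2$ to its model form. The sign $s$ and the arguments $\Theta(p_j)$ appear precisely because the WKB states $w_j^\dir$ are normalized via the FBI transform in \eqref{eq:FBI-normal}, which is \emph{not} invariant under $\kappa$: propagating the normalization $\tilde w_j^\dir\equiv\tilde v_j\mathbf{e}_j+\BigO{h}$ back through $U^{-1}$ introduces metaplectic phase factors depending on the angle of $H_{p_j}(0,0)$ in the canonical symplectic frame, encoded by $\Theta(p_j)$. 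The hard part will be verifying that every auxiliary choice (branch of $\kappa$, orientation of $\gamma_j^\dir$, Maslov correction) cancels consistently so that the final formula depends only on the intrinsic data $(p_1,p_2,q_1,q_2)$; once this normalization bookkeeping is done, \eqref{eq:asymptotics-T} follows by combining the three steps, with the antidiagonal structure reflecting that the coupling requires one application of the off-diagonal $h\tilde q_j$ terms.
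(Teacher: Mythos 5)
Your overall strategy --- normal form reduction via an FIO conjugation, Duhamel formula for the resulting first-order system, degenerate stationary phase for the oscillatory integrals, and normalization bookkeeping when transporting WKB states through the conjugating operator --- is exactly the strategy the paper follows (Sections \ref{S:NormalForms}--\ref{S:Correspondence}). However, there is a substantive off-by-one error in your first step that propagates through the analysis.

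You write $\tilde p_2(x,\xi)=e(x,\xi)\bigl(\xi + c\,x^{m+1}/(m+1)! + O(x^{m+2})\bigr)$, but this factorization has the wrong order of vanishing. Since you correctly identified $\tilde p_2(x,0) = c\,x^m/m! + O(x^{m+1})$, and since $\partial_\xi\tilde p_2(0,0) \neq 0$, the implicit function theorem gives $\tilde p_2 = a_0(x,\xi)\bigl(\xi - f(x)\bigr)$ with $f$ vanishing to order exactly $m$ at the origin, not $m+1$; the contact order of $\{\xi=0\}$ and $\{\xi = f(x)\}$ is precisely the vanishing order of $f$, so taking $f\sim x^{m+1}$ would describe a tangency of order $m+1$, contradicting Condition \ref{C:ContactOrder}. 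Because you placed a quantity vanishing to order $m+1$ in the role of the diagonal potential $V$, your phase $\psi = \int_0^x V$ comes out vanishing to order $m+2$, and then the substitution $x = h^{1/(m+1)}t$ does \emph{not} make $\psi(x)/h$ $h$-independent (an extra positive power of $h$ survives in the exponent); the leading term would instead scale as $h^{1/(m+2)}$, contradicting the statement you are trying to prove. With the corrected factorization ($f \sim x^m$, hence the phase $F = \int_0^x f \sim x^{m+1}$, and $F^{(m+1)}(0) = f^{(m)}(0) \neq 0$), the rescaling produces the canonical integral $\int e^{i\,a\,t^{m+1}}dt$ and the stated $h^{1/(m+1)}$ with the factor $\bm{\Gamma}\!\left(\frac{m+2}{m+1}\right)\mu_m(\cdot)$, which is what the paper's Lemma \ref{Lemma:degst} delivers. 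Your third step correctly anticipates where the geometric constants come from, but since it is only sketched I will not attempt to audit whether, for instance, the case of a negative elliptic factor (which reverses the direction of the flow on the second characteristic, treated in Subsection \ref{SubS:negative-c}) is handled consistently.
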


\begin{rmk}
The ambiguity of the choices of normalized solutions $w_j^{\dir}$ is not explicit in this formula since the estimate of the remainder $\BigO{h^{2/(m+1)}\left(\log h^{-1}\right)^{\delta_{m,1}}}$ is rougher than $\BigO{h}$.
\end{rmk}

\begin{rmk}
    By the definition \eqref{eq:def-Theta} of $\Theta(p_j)$, the vector $\tau(\theta)$ defined by \eqref{eq:def-s-tau-mu} is the unit vector parallel to the Hamiltonian vector $H_{p_j}(0,0)$, that is,
    \begin{equation*}
        H_{p_j}(0,0)=\pm\left|H_{p_j}(0,0)\right|\tau(\Theta(p_j)),\quad
        \left(H_{p_j}(0,0),\tau(\Theta(p_j))\right)_{\Rr^2}=\pm\left|H_{p_j}(0,0)\right|,
    \end{equation*}
    where the identities hold true for the same sign.
    The sign is positive if and only if either $\partial_\xi p_j(0,0)>0$ or $\partial_\xi p_j(0,0)=0$ and $\partial_xp_j(0,0)>0$ holds (see Figures~\ref{Fig:Sign-s-transv} and \ref{Fig:Sign-s-tang}).
    In the tangential case, one has $\Theta(p_1)=\Theta(p_2)$. 
    This implies the identity
    \begin{equation*}
        s=\operatorname{sgn}(H_{p_1}(0,0),H_{p_2}(0,0))_{\Rr^2}.
    \end{equation*}
\end{rmk}

\begin{rmk}
    The formulae of $\omega_1$ and $\omega_2$ in Theorem~\ref{Th:LSM} are written symmetrically.
    We can also write them using only either one of $H_{p_1}^mp_2(0,0)$ or $H_{p_2}^mp_1(0,0)$ 
    since one has the identity
    \begin{equation*}
        H_{p_1}^mp_2(0,0)=-\left(\frac{|\partial_{(x,\xi)}p_1(0,0)|}{|\partial_{(x,\xi)}p_2(0,0)|}\right)^{m-1}H_{p_2}^mp_1(0,0).
    \end{equation*}
    This identity will be proven in Lemma~\ref{lem:prelim-for-compute}.
\end{rmk}

\begin{figure}[h]
    \begin{minipage}{\linewidth}
    {\centering
	\includegraphics[width=\linewidth,page=2]{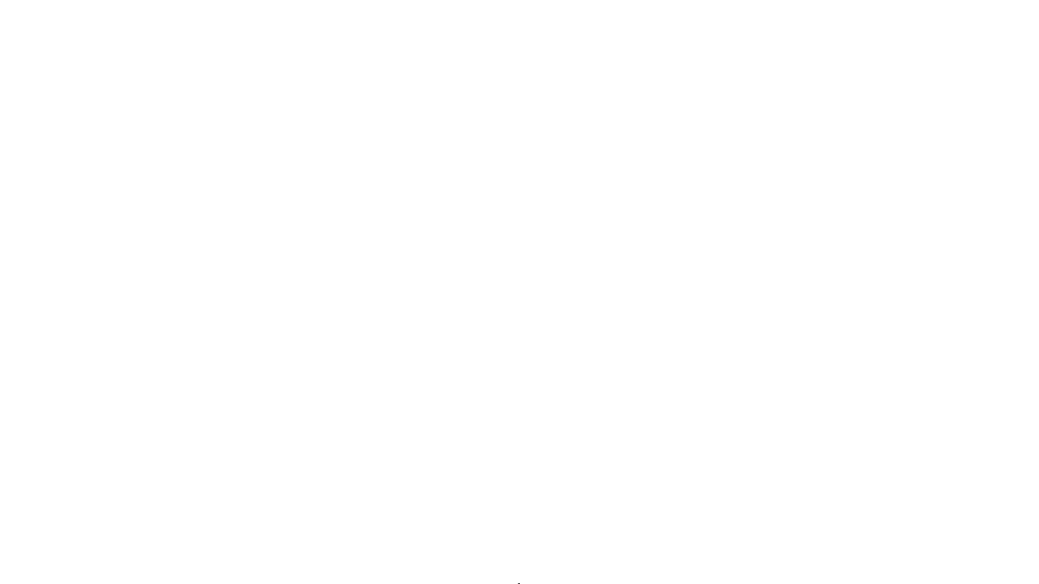}
    \caption{Hamiltonian flows, the sign of $s$ introduced by \eqref{eq:def-s-tau-mu}, and $\Theta(p_j)$ introduced by \eqref{eq:def-Theta} for the transversal case $m=1$.}\label{Fig:Sign-s-transv}
    }
    \end{minipage}
\end{figure}

\begin{figure}[h]
    \begin{minipage}{\linewidth}
    {\centering
    \includegraphics[width=\linewidth,page=2]{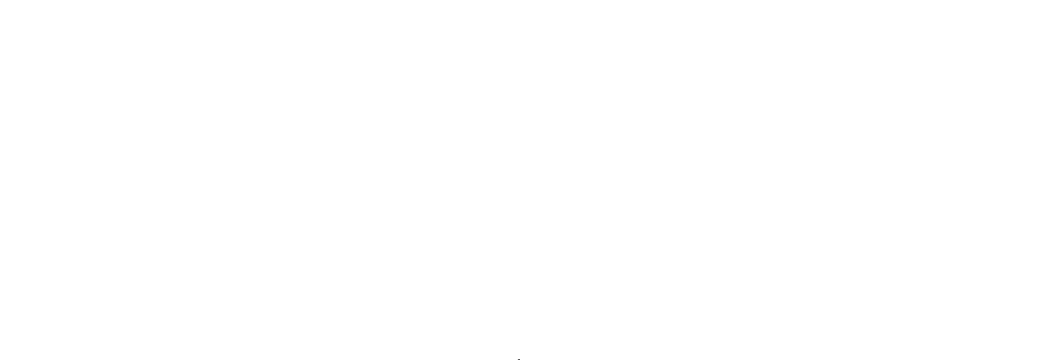}
    \caption{Hamiltonian flows and the sign of $s$ introduced by \eqref{eq:def-s-tau-mu} for the tangential case $m\ge2$.}\label{Fig:Sign-s-tang}
    }
    \end{minipage}
\end{figure}

\subsection{Motivations and context}

The main motivation to computing the transfer matrix \eqref{eq:asymptotics-T} is to provide a general method for computing spectral asymptotics, namely the eigenvalue splitting (\cite{AF}) and the distribution of resonances (\cite{FMW1, FMW2, FMW3, AFH, AFH2, Lou}) of matrix-valued Schr\"odinger operators in the semiclassical limit $h \to +0$\footnote{We also find a similar approach for the asymptotics of the scattering matrix of matrix-valued Schr\"odinger operators 
with
transversal crossing of Hamiltonian flows in \cite{ABA}.}. 
Essentially, such distribution is obtained by studying the associated eigenstates and resonant states, which are microlocalized on the characteristics $\Lambda_j$ and behave as microlocal solutions of the eigenvalue equation \eqref{Eq:MicrolocalCauchyProblem}. The behavior of such solutions on the characteristics is known outside crossing points, therefore establishing a microlocal connection formula at those crossing points is enough to describe resonant and eigen- states. 
Such an approach is employed in \cite{FMW3,AF,AFH,AFH2,Lou}. While \cite{FMW3} and \cite{AF} used a normal form reduction to obtain a microlocal connection formula for transversal case, \cite{AFH,AFH2,Lou} used a non-microlocal method to obtain the formula for tangential case.
Our goal is to unify those different methods using a normal form reduction which applies to the both situations.
The study of a microlocal problem via a normal form reduction is not new, and the idea traces back to the works of Landau and Zener on avoided crossings. We distinguish two types of normal form reductions of matrix-valued operators:
\begin{itemize}
\item The first essentially consists in reducing a matrix-valued symbol of principal type to a scalar-valued symbol of the form $x\xi$, which has an hyperbolic fixed point at $(0,0)$. Applications of this method to microlocal problems are found in \cite{FMW3} or \cite{CLP} for example, where the interaction term is assumed to be elliptic at the crossing point. 

\item The second method consists in reducing the matrix-valued symbol into another simpler matrix-valued symbol. This method has the advantage of preserving the geometric nature of the crossing point (the crossing of real principal type is not transformed into an hyperbolic fixed point) and does not require ellipticity. Such method was developed for codimension 3 crossings in \cite{CdV03}.
This was applied in \cite{CdV11} for 1-dimensional, transversal crossings. 
\end{itemize}
For both methods however, tangential crossings were excluded up until now. 
To deal with such crossings, the first author extended in \cite{AFH} the construction of \cite{FMW1} of exact solutions near the crossing point. 
The second author improved this method in \cite{Lou} 
under a relaxed elliptic assumption on the interaction term.

Our normal form result, Theorem \ref{Thm:Matrix-NF}, accounts for both transversal and tangential crossings and holds without any ellipticity assumption. It relies essentially on the Darboux theorem and the Egorov theorem, and makes for a much simpler proof than that of \cite{CdV11} for the transversal case. 
Moreover, we apply this normal form reduction to recover the results of \cite{AFH}, \cite{FMW1}, \cite{FMW2}, and \cite{FMW3} all at once, through the microlocal connection formula (Theorem \ref{Th:LSM}). In practice, since $P$ is a $2\times2$ matrix-valued differential operator of order $1$ at the principal level, we end up computing $2 \times 2$ change-of-basis matrices (see \eqref{lem:basisexist}), whereas for example the direct computation of \cite[Proposition 3.12]{AFH} for the Schr\"odinger operator (which is of order $2$ at the principal level) involves $4 \times 4$ change-of-basis matrices. 

In the rest of the section, we sketch an outline of the connection formula through a simple example and we apply Theorem \ref{Th:LSM} to recover the connection formulae for Schr\"odinger-type operators. In Section \ref{S:Preliminaries}, we lay out the background of microlocal analysis needed for this paper. The organization of sections \ref{S:NormalForms}, \ref{S:CFReduced} and \ref{S:Correspondence} will be detailed in Section \ref{SubS:OutlineProof}.

\subsection{Simple case (non-microlocal)}\label{SubS:simple}

Let us describe our method for a simple case with
\begin{align*}
P_1=hD_x,\quad P_2=hD_x-f(x),\quad R_1=R_2=r(x)
\end{align*}
where $f\in C^{\infty}(\Rr)$ and $r\in C_c^{\infty}(\Rr)$ are real-valued functions satisfying
\begin{align*}
f^{(k)}(0)=0\quad \text{for}\quad k=0,1,\hdots,m-1,\quad f^{(m)}(0)\neq 0,\qquad f(x)\neq 0\quad\text{for}\quad x\in \operatorname{supp}r\setminus\{0\}.
\end{align*}
Although Theorems \ref{Thm:ExistenceAndUniqueness} and \ref{Th:LSM} deal with microlocal solutions, we focus on the exact solutions here.

The equation $\mathscr{P}u=0$ with $u(x)={}^t(u_1(x),u_2(x))$ is written as the following system of differential equations:
\begin{align}\label{eq:intromodeleq}
\left\{
\begin{aligned}
    &hD_xu_1(x)=-hr(x)u_2(x)\\
    &(hD_x-f(x))u_2(x)=-hr(x)u_1(x).
\end{aligned}\right.
\end{align}
First, we observe that this equation can be written by the Duhamel formula as
\begin{align}\label{eq:intromodelDuh}
u_1(x)=\alpha^\inc_1-i\Gamma_+(e^{-\frac{i}{h}F(\cdot)}u_2),\quad u_2(x)=e^{\frac{i}{h}F(x)} \left(\alpha^\inc_2-i\Gamma_-(u_1)\right),
\end{align}
with $\alpha^\inc_1=u_1(-1)$ and $\alpha^\inc_2=u_2(-1)\exp(\frac ih\int_{-1}^0f(y)dy)$, where we define
\begin{equation*}
    \Gamma_\pm v(x):=\int_{-1}^x e^{\pm F(y)}r(y)v(y)dy,\quad F(x):=\int_0^xf(y)dy.
\end{equation*}

\subsection*{Existence and uniqueness of exact solutions}
The equation \eqref{eq:intromodeleq} is a first order differential equation and the space of the solutions is two-dimensional. Moreover, the solution is uniquely determined by the initial data $\alpha_1^{\inc},\alpha_2^{\inc}\in\mathbb{C}$ and can be written explicitly as
\begin{align}\label{eq:intromodelDuh2}
u_1=(I+\Gamma_+\Gamma_-)^{-1}\left(\alpha^\inc_1-i\alpha^\inc_2\Gamma_+(\mathbbm{1})\right),\,\, u_2=e^{\frac{i}{h}F}(I+\Gamma_-\Gamma_+)^{-1}\left(\alpha^\inc_2-i\alpha^\inc_1\Gamma_-(\mathbbm{1})\right)
\end{align}
due to \eqref{eq:intromodelDuh}, where $\mathbbm{1}$ is the constant function valued at $1$. Here, the operator $I+\Gamma_+\Gamma_-$ is invertible for sufficiently small $h>0$, in fact, we have $\|\Gamma_\pm\Gamma_\mp\|_{L^\infty\to L^\infty}=\BigO{h^{\frac1{m+1}}}$
by the degenerate stationary phase formula (see Proposition \ref{prop:normbdd}).

\subsection*{Asymptotic behavior of the solution}
To describe the asymptotic behavior of the solution, we need more precise estimates for 
$\Gamma_{\pm}$. We have
$\|\Gamma_\pm\Gamma_\mp(\mathbbm{1})\|_{L^\infty\to L^\infty}=\BigO{h^{\frac2{m+1}}\logdelt }$ and
\begin{align*}
\Gamma_\pm(\mathbbm{1})(x)=\left\{
    \begin{aligned}
        &
        \BigO{h}
        &&(-1\le x\le -\epsilon)\\
        &
        h^{\frac1{m+1}}r(0)\omega_{\pm}+\BigO{h^{\frac{2}{m+1}}\logdelt}
        &&(\epsilon\le x\le 1)
    \end{aligned}\right.\quad \text{for fixed $0<\epsilon<1$}
\end{align*}
with $\omega_{+}=\omega, \omega_-=\overline{\omega}$ and $\omega$ defined by 
\eqref{Eq:omega}. This is a consequence
of the degenerate stationary phase formula (see Proposition \ref{prop:normbdd}), where we note that the critical point of $F$ on $\supp r$ is $x=0$ only.
 By \eqref{eq:intromodelDuh2} and the Neumann series expansion $(I+\Gamma_{\pm}\Gamma_{\mp})^{-1}=\sum_{k=0}^{\infty}(-1)^k(\Gamma_{\pm}\Gamma_{\mp})^k$, we conclude
\begin{align}\label{eq:introasyexp}
\begin{pmatrix}u_1(x)\\ e^{-\frac{i}{h}F(x)}u_2(x)\end{pmatrix}=&\left\{\begin{aligned}
        &
\begin{pmatrix}\alpha^\inc_1\\ \alpha^\inc_2\end{pmatrix}
&&(-1\le x\le -\epsilon)\\
&\begin{pmatrix}\alpha^\inc_1-ih^{\frac1{m+1}}r(0)\omega \alpha^\inc_2\\ 
\alpha^\inc_2-ih^{\frac{1}{m+1}}r(0)\overline{\omega} \alpha_1^{\inc}\end{pmatrix}
&&(\epsilon\le x\le 1)
\end{aligned}\right..
\end{align}
modulo $\BigO{h^{\frac{2}{m+1}}\logdelt }$.
In particular, we see that the asymptotic behavior of the solution $u$ dramatically changes around $x=0$.

\subsection*{Transfer matrix}
Now we describe the asymptotic behavior of the transfer matrix $T$ with respect to the basis $\tw_1^{\inc},\tw_2^{\inc}$ and $\tw_1^{\out},\tw_2^{\out}$ that are WKB solutions for $-1\leq x\leq -\epsilon$ and $\epsilon\leq x\leq 1$ respectively of the form $\tw_1^{\dir}={}^t(1,0)+\BigO{h}$ and $\tw_2^{\dir}={}^t(0,e^{\frac{i}{h}F(x)})+\BigO{h}$ for $\dir\in\{\inc,\out\}$.
Note here that we are using the functions $\tw_2^\inc$, $\tw_2^\out$ not normalized in the sense of Definition~\ref{def:normalized} for a notational simplicity (the normalization requires a multiplication by a constant depending on $f'(0)$).
The asymptotic expansion \eqref{eq:introasyexp} yields
\begin{align*}
u(x)=\left\{\begin{aligned}
&\alpha_1^{\inc}\tw_1^{\inc}+\alpha_2^{\inc}\tw_2^{\inc} &&(-1\le x\le -\epsilon)\\
&(\alpha^\inc_1-ih^{\frac1{m+1}}r(0)\omega \alpha^\inc_2)\tw_1^{\out}+(\alpha^\inc_2-ih^{\frac{1}{m+1}}r(0)\overline{\omega} \alpha_1^{\inc})\tw_2^{\out} &&(\epsilon\le x\le 1)
    \end{aligned}\right.
\end{align*}
modulo $\BigO{h^{\frac{2}{m+1}}\logdelt }$. Then, the transfer matrix $T$ can be calculated as
\begin{align*}
\begin{pmatrix}
\alpha_1^{\inc}\\
\alpha_2^{\inc}
\end{pmatrix}\mapsto \begin{pmatrix}
\alpha^\inc_1-ih^{\frac1{m+1}}r(0)\omega \alpha^\inc_2\\
\alpha^\inc_2-ih^{\frac{1}{m+1}}r(0)\overline{\omega} \alpha_1^{\inc}
\end{pmatrix}=\left(I_2-ih^{\frac{1}{m+1}}\begin{pmatrix}0&r(0)\omega  \\ r(0)\overline{\omega}&0\end{pmatrix} \right)\begin{pmatrix}
\alpha_1^{\inc}\\
\alpha_2^{\inc}
\end{pmatrix}
\end{align*}
modulo $\BigO{h^{\frac{2}{m+1}}\logdelt }$.

\subsection{The example of the matrix-valued Schr\"odinger operator}\label{SubS:Schrodinger}

We now turn to a microlocal example. We consider a matrix-valued Schr\"odinger operator
\begin{equation}\label{Eq:MatrixSchrödingerOperator} P_{\rm Sc} = P_{\rm Sc}(h,E_0) := 
	\begin{pmatrix}
		P_1-E_0 & h W\\
		h W & P_2-E_0
	\end{pmatrix}
\end{equation}
with $P_j :=(h D_x)^2  + V_j(x)$ $j = 1,2$, $E_0 \in \Rr$ and $W = W(x)$ is a multiplication operator. Below, we apply Theorem \ref{Th:LSM} and recover the transfer matrix of $P_{\rm Sc}$ at the crossing points as obtained in \cite[Theorem 3.4]{AFH} and \cite[Theorem 2]{AFH2}.
 
Assume that the function $V_2-V_1$ vanishes only at $x=0$, at a finite order $n \in \Nn \setminus \{0\}$:
\begin{equation}\label{Eq:ContactOrderOfPotentials}
	(V_2-V_1)^{(j)}(0) = 0 \text{ for all }  j \in \{0, \dots, n-1\}, \stext{and} (V_2-V_1)^{(n)}(0) \neq 0.
\end{equation}
For simplicity, assume that $V_1(0) = 0$ and $|x^{-1}V'_j(x)| > 0$ near $0$.
The characteristic sets $\Lambda_j := \{\xi^2 + V_j(x) = E_0\}$ associated with $P_j-E_0$, $j = 1,2$, are assumed to satisfy:
\begin{enumerate}[label=\textbf{\blue{(\roman*)}}]
	\item If $E_0 > 0$ then $\Lambda_1 \cap \Lambda_2 = \{(0,\xi_0), (0,-\xi_0)\}$. This is the situation considered in \cite{AFH}.\label{item:NoCaustic}
	\item If $E_0 = 0$ then $\Lambda_1 \cap \Lambda_2 = \{(0,0)\}$. This is the situation considered in \cite{AFH2}.\label{item:Caustic}
\end{enumerate}
where $\xi_0 = \sqrt{E_0}$. The crossing order $m$ of $\Lambda_1$ and $\Lambda_2$ is
$m = n$ for (i) and $m=2n$ for (ii). In (ii), the crossing point $(0,0)$ is also a turning point (or caustic), meaning that $\partial_\xi p_j(0,0)=0$.

\subsection*{Transfer matrix for \ref{item:NoCaustic}}

The computation of the transfer matrix for both crossing points is similar (see Remark \ref{Rem:ConjugateTransferMatrix}), therefore we only detail computations for the transfer matrix at the crossing point $(0,\xi_0)$. Since $(0,\xi_0)$ is not a turning point, we may solve the eikonal and transport equations and fix a basis of WKB solutions $w_1^{\inc}$, $w_2^{\inc}$,  $w_2^{\out}$ and $w_2^{\out}$ for $P_{\rm Sc}$ of the form
\begin{equation}\label{Eq:WKBSchrodinger}
    w_j^\dir(x,h) = e^{\frac{i}{h}\int_0^x\sqrt{E_0-V_j(y)}\,dy}
    \sigma_j(x)(\mathbf{e}_j + \BigO{h})
\end{equation}
where
\begin{equation}\label{Eq:PhaseAmpWKBSchrod}
    \sigma_j(x) := c_j\left(1-\frac{V_j(x)}{E_0}\right)^{-1/4} \quad \text{and }
    c_j := \sqrt{\frac{\left|\partial_{(x,\xi)} p_j(0,\xi_0)\right|}{\left|\partial_\xi p_j(0,\xi_0)\right|}} = \left(1 + \frac{V_j'(0)^2}{4E_0}\right)^{\frac 14}.
\end{equation}
According to Proposition \ref{prop:scalarmicsol}, the identity $\sigma_j(0) = c_j$ ensures that those solutions are normalized in the sense of Definition \ref{def:normalized}. Now, fix a neighborhood $\Omega$ of $(0,\xi_0)$. Theorem \ref{Thm:ExistenceAndUniqueness} ensures there exist constants $\alpha_j^\dir = \alpha_j^\dir(u) \in \Cc$ such that, for any microlocal solution $u$ of $P_{\rm Sc}u \equiv 0$ near $\Omega$,
\begin{equation*}
	u \equiv \alpha_j^\dir w_j^\dir \text{ microlocally near } \rho_j^\dir.
\end{equation*}
The transfer matrix $T_{\rm Sc}(0,\xi_0)$ of $P_{\rm Sc}$ at $(0,\xi_0)$ is uniquely defined by ${}^t(\alpha_1^\out, \alpha_2^\out) = T_{\rm Sc}(0,\xi_0) {}^t(\alpha_1^\inc, \alpha_2^\inc)$. We recall that the crossing order is $m = n$ and that
\begin{equation*}
    \partial_{x,\xi} p_j(0,\xi_0) = (V_j'(0),2\xi_0) \quad \text{and } H_{p_1}^np_2(0,\xi_0) = 2^n\xi_0^n(V_2-V_1)^{(n)}(0) \quad \text{and } s = 1.
\end{equation*}
We then apply Theorem \ref{Th:LSM} and recover the result of \cite{AFH} on the asymptotics of $T_{\rm Sc}(0,\xi_0)$: 
\begin{prop}
	In the case \ref{item:NoCaustic}, the transfer matrix $T_{\rm Sc}(0,\xi_0)$ satisfies
	\begin{equation}\label{Eq:TransferMatrixSchrodinger}
		T_{\rm Sc}(0,\xi_0)=I_2-ih^{\frac1{n+1}}
		\begin{pmatrix}
			0&\omega W(0)\\\overline{\omega}W(0)&0
		\end{pmatrix}
		+\BigOO{h^{\frac2{n+1}}\left(\log\frac1h\right)^{\delta_{n,1}}}{\Cc^2\to\Cc^2},
	\end{equation}
	as $h\to+0$ with 
	\begin{align}\label{Eq:OmegaSchrodinger+}
		\omega=\mu_n\left(\frac{-\sgn((V_2 - V_1)^{(n)}(0))}{2(n+1)}\pi\right)
		\mathbf{\Gamma}\left(\frac{n+2}{n+1}\right) 
		\left(\frac{2(n+1)!}{|(V_2 - V_1)^{(n)}(0)|}\right)^{\frac{1}{n+1}}. 
	\end{align}
\end{prop}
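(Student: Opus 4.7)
The strategy is to apply Theorem \ref{Th:LSM} at the crossing point $(0,\xi_0)$ after verifying that the WKB basis \eqref{Eq:WKBSchrodinger} is normalized in the sense of Definition \ref{def:normalized} and computing each geometric invariant entering \eqref{eq:asymptotics-T} in terms of the Schrödinger data.

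First I would check the scalar normalization of $v_j(x) = \sigma_j(x)e^{i\phi_j(x)/h}$ with $\phi_j'(x) = \sqrt{E_0 - V_j(x)}$. Conjugating by $e^{-i\xi_0 x/h}$ to translate the crossing to the origin of phase space, the integral in \eqref{eq:FBI-normal} has phase $\psi(x) = i(\phi_j(x) - \xi_0 x) - x^2/2$ with $\psi(0) = \psi'(0) = 0$ and $\psi''(0) = -1 - iV_j'(0)/(2\xi_0)$. Complex stationary phase then yields
\begin{equation*}
    \frac{e^{-i\Theta(p_j)/2}}{\sqrt{2\pi h}}\int_{\Rr} e^{-x^2/(2h)}v_j(x)\,dx = \frac{\sigma_j(0)}{c_j} + \BigO{h},
\end{equation*}
since the principal square root of $-\psi''(0) = 1 + iV_j'(0)/(2\xi_0)$ has modulus $c_j$ and argument $-\Theta(p_j)/2$ by \eqref{eq:def-Theta}. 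Hence the choice $\sigma_j(0) = c_j$ from \eqref{Eq:PhaseAmpWKBSchrod} gives scalar normalization, and the matrix condition \eqref{eq:WKB-normal} is then immediate from \eqref{Eq:WKBSchrodinger}.

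Next I would identify the Hamiltonian invariants. Since $\partial_\xi p_j(0,\xi_0) = 2\xi_0 > 0$ for both $j$, the remark following Theorem \ref{Th:LSM} gives $s = +1$. Using the identity $H_{p_1}^k p_1 \equiv 0$, one has $H_{p_1}^k p_2 = H_{p_1}^k(V_2 - V_1)$; expanding $H_{p_1} = 2\xi\partial_x - V_1'(x)\partial_\xi$ and iterating, every term at $(0,\xi_0)$ containing at least one $\partial_\xi$ is proportional to some $(V_2-V_1)^{(j)}(0)$ with $j < k$, which vanishes under \eqref{Eq:ContactOrderOfPotentials} for $k \le n$. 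Only the pure-$\partial_x$ term survives, giving $H_{p_1}^k p_2(0,\xi_0) = 0$ for $k < n$ and $H_{p_1}^n p_2(0,\xi_0) = (2\xi_0)^n(V_2-V_1)^{(n)}(0)$. This confirms that the contact order is $m = n$ and $\operatorname{sgn}(sH_{p_1}^n p_2) = \operatorname{sgn}((V_2-V_1)^{(n)}(0))$. In addition $|\partial_{(x,\xi)}p_j(0,\xi_0)| = \sqrt{V_j'(0)^2 + 4E_0}$ and $q_j(0,\xi_0) = W(0)$.

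Substituting these quantities into the expressions for $\omega_1$ and $\omega_2$ in Theorem \ref{Th:LSM}, and using the remark-identity $H_{p_1}^n p_2 = -(|\partial p_1|/|\partial p_2|)^{n-1} H_{p_2}^n p_1$ to express $\omega_2$ in terms of $H_{p_1}^n p_2$, produces the claimed asymptotic \eqref{Eq:TransferMatrixSchrodinger} with $\omega$ as in \eqref{Eq:OmegaSchrodinger+}. The main technical obstacle is the final bookkeeping: the powers of $\xi_0$ and $V_j'(0)$ coming from $|H_{p_1}^n p_2|$ and from the ratio $|\partial p_2|/|\partial p_1|$ must be consolidated against the normalization constants $c_j = (|\partial p_j|/|\partial_\xi p_j|)^{1/2}$ built into the WKB amplitudes. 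This cancellation is precisely what produces the compact, $V_j'(0)$- and $E_0$-free form of $\omega$ displayed in \eqref{Eq:OmegaSchrodinger+}.
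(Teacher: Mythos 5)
Your strategy matches the paper's exactly: apply Theorem~\ref{Th:LSM} after checking that the basis \eqref{Eq:WKBSchrodinger} is normalized, then compute the geometric data $s$, $H_{p_1}^np_2$, $|\partial_{(x,\xi)}p_j|$, $q_j$ at the crossing. Your stationary-phase verification of \eqref{eq:FBI-normal} is a correct re-derivation of Proposition~\ref{prop:scalarmicsol}~(iii) (with $\psi''(0)=-1-iV_j'(0)/(2\xi_0)$, modulus $c_j$ and argument $-\Theta(p_j)/2$ of the principal root), and your induction giving $H_{p_1}^np_2(0,\xi_0)=(2\xi_0)^n(V_2-V_1)^{(n)}(0)$ and $s=1$ agrees with the values the paper quotes before the proposition. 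The use of the identity $H_{p_1}^np_2=-(|\partial p_1|/|\partial p_2|)^{n-1}H_{p_2}^np_1$ to tie $\omega_2$ to $\omega_1$ is also the intended route.

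One point in your last paragraph is misstated. You write that the powers of $\xi_0$ and $V_j'(0)$ ``must be consolidated against the normalization constants $c_j$ built into the WKB amplitudes.'' That is not how the mechanism works: the role of the $c_j$ is exhausted in ensuring that the WKB basis satisfies Definition~\ref{def:normalized}; once that hypothesis holds, formula \eqref{eq:asymptotics-T} is expressed purely in terms of $p_j$, $q_j$ at the crossing point, and the $c_j$ do not reappear. The bookkeeping that has to close is entirely internal to $\omega_1$ and $\omega_2$: the factor $2$ in front of $\mu_m$, the ratio $|\partial_{(x,\xi)}p_2|/|\partial_{(x,\xi)}p_1|$, and the $(2\xi_0)^n$ inside $|H_{p_1}^np_2|$ must combine to produce the $E_0$- and $V_j'$-free expression \eqref{Eq:OmegaSchrodinger+}. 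Like the paper, you assert but do not carry out this substitution. It is worth doing explicitly: pulling the prefactor $2=(2^{n+1})^{1/(n+1)}$ inside the $(n+1)$-th root, the remaining factor $\bigl(|\partial p_2|/(|\partial p_1|\xi_0^n)\bigr)^{1/(n+1)}$ does not manifestly cancel, so either an additional identity among these quantities is being used tacitly (e.g.\ comparing against the convention of \cite{AFH} via the change-of-basis constants $\gamma_j^{\dir}$ in \eqref{Eq:TransferMatrixChangeWKB}), or the arithmetic in \eqref{Eq:OmegaSchrodinger+} deserves a second look. Tracking this last step carefully would strengthen both your proposal and the exposition.
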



\begin{rmk}\label{Rem:TranslateT}
	Rigorously speaking, one should first translate the crossing point to $(0,0)$ by conjugating $P_{\rm Sc}$ by the Fourier Integral Operator $e^{\frac{i}{h}x\xi_0}$ generated by the symplectomorphism $\kappa(x,\xi) := (x,\xi-\xi_0)$. Then, Theorem \ref{Th:LSM} applied to $\ms{P} := e^{-\frac{i}{h}x\xi_0} \, P_{\rm Sc} \, e^{\frac{i}{h}x\xi_0}$ yields the transfer matrix of $\ms{P}$. Deducing the transfer matrix $P_{\rm Sc}$ from that of $\ms{P}$ is straightforward (see Proposition \ref{Prop:FIO-WKB}). 
\end{rmk}

\begin{rmk}\label{Rem:ConjugateTransferMatrix}
Note that $H_{p_1}^np_2(0,-\xi_0) = (-1)^nH_{p_1}^np_2(0,\xi_0)$ so that
	\begin{equation*}
		T_{\rm Sc}(0,-\xi_0) = I_2-ih^{\frac1{n+1}}
		\begin{pmatrix}
			0&\overline{\omega}W(0)\\\omega W(0)&0
		\end{pmatrix}
		+\BigOO{h^{\frac2{n+1}}\left(\log\frac1h\right)^{\delta_{n,1}}}{\Cc^2\to\Cc^2}.
	\end{equation*}
\end{rmk}

\subsection*{Transfer matrix for \ref{item:Caustic}: turning point} 

Since $(0,0)$ is a turning point, it is not possible to directly construct WKB solutions as above because $E_0-V_j=-V_j$ changes sign at $x=0$. Instead, we remove the turning point by rotating the phase space with the symplectomorphism $\kappa(y,\eta) := (\eta, -y)$ in order to have $\partial_\eta (\kappa^* p_j) (0,0) = V'_j(0) \neq 0$. The associated unitary metaplectic operator is $M_{\pi/2}:= e^{i\pi/4}\mc{F}_h$ where $\mc{F}_h$ is the semiclassical Fourier transform defined in \eqref{Eq:Fourier}. First, we construct WKB solutions for the rotated operator $M_{\pi/2}^{-1}P_{\rm Sc}M_{\pi/2}$ by solving the Eikonal and transport equations with a stationary phase argument, then we normalize them using Proposition \ref{prop:scalarmicsol} as above. We obtain
\begin{equation}\label{Eq:WKBSchrodingerCaustic}
    v_j^\dir(y) := e^{\frac{i}{h}\phi_j(y)} \sigma_j(y)(\mathbf{e}_j+\BigO{h})
\end{equation}
where $\phi_j(y) := \int_0^x V_j^{-1}(-y^2)\,dy$ and
$\sigma_j(y) := |V_j'(0)|^{1/2}|V_j'(V_j^{-1}(-y^2))|^{-1/2}$.
Next, we bring back the $v_j^\dir$ to normalized WKB\footnote{Rigorously, for \eqref{Eq:WKBRotatedSchrodinger} to rigorously define a WKB solution, we must replace $\sigma_j(y)$ by $\chi_\dir (y)\sigma_j(y)$ where $\chi_\dir(y)$ is a smooth cutoff supported in $(-\infty,0]$ for $\dir = \inc$ and $[0,+\infty)$ for $\dir = \out$.} solutions $w_j^\dir$ of the original problem by setting
\begin{equation}\label{Eq:WKBRotatedSchrodinger}
    w_j^\dir(x) := -iM_{\pi/2}v_j^\dir(x) := -iM_{\pi/2}\left[e^{\frac{i}{h}\phi_j(\cdot)} \sigma_j(\cdot)\right](x)(\mathbf{e}_j+\BigO{h}).
\end{equation}
The normalization constant $-i$ comes from the identities $\Theta(p_j) = -\pi/2$, $\Theta(\kappa^*p_j) = 0$ and
\begin{equation*}
    \frac{e^{-i\Theta(p_j)/2}}{\sqrt{2\pi h}}\int_{\Rr}e^{-\frac{x^2}{2h}}M_{\pi/2}v_j^\dir(x)dx=\frac{e^{i\pi/4}}{\sqrt{2\pi h}}\int_{\Rr}M_{\pi/2}\left(e^{-\frac{x^2}{2h}}\right)v_j^\dir(x)dx = i(1 + \BigO{h}).
\end{equation*}
We finally apply Theorem \ref{Th:LSM} after recalling that $m=2n$ as well as
\begin{equation*}
    \partial_{x,\xi} p_j(0,0) = (V_j'(0),0) \quad \text{and } H_{p_1}^np_2(0,\xi_0) = \frac{(-1)^{n}(2n)!}{n!}(V_1'(0))^{n}(V_2-V_1)^{(n)}(0) \quad \text{and } s = 1
\end{equation*}
and recover the result of \cite{AFH2} on the asymptotics of $T_{\rm Sc}(0,0)$:


\begin{prop}
	In the case \ref{item:Caustic}, the transfer matrix $T_{\rm Sc}(0,0)$ satisfies
	\begin{equation}\label{Eq:TransferMatrixSchrodingerCaustic}
		T_{\rm Sc}(0,0) =I_2-ih^{\frac1{2n+1}}
		\begin{pmatrix}
			0&\omega_1 W(0)\\ \omega_2 W(0)&0
		\end{pmatrix}
		+\BigOO{h^{\frac2{2n+1}}}{\Cc^2\to\Cc^2},
	\end{equation}
	as $h\to+0$ with 
	\begin{align}\label{Eq:OmegaSchrodingerCaustic}
		\omega_j = 2\left(\frac{|V_{\hat{j}}'(0)|}{|V_j'(0)|}\frac{(2n+1)n!}{\left|V_j'(0)\right|^{n}\left|(V_2-V_1)^{(n)}(0)\right|}\right)^{\frac{1}{2n+1}}\pmb\Gamma\left( \frac{2n+2}{2n+1}\right) \cos\left(\frac{\pi}{2(2n+1)}\right) \in \Rr.
	\end{align}
    where $\hat{j} = 1,2$ is such that $\{j,\hat{j}\} = \{1,2\}$.
\end{prop}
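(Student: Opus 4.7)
The plan is to apply Theorem \ref{Th:LSM} to the rotated operator $\tilde P_{\rm Sc} := M_{\pi/2}^{-1}P_{\rm Sc}M_{\pi/2}$, for which the crossing point $(0,0)$ is no longer a turning point. The metaplectic operator $M_{\pi/2}$ quantizes the symplectomorphism $\kappa(y,\eta) = (\eta,-y)$, and the conjugate symbols $\tilde p_j(y,\eta) := (\kappa^* p_j)(y,\eta) = y^2 + V_j(\eta)$ satisfy $\partial_\eta \tilde p_j(0,0) = V_j'(0) \neq 0$. Since the contact order is a symplectic invariant, the rotated crossing at $(0,0)$ still has contact order $m = 2n$. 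By Proposition \ref{Prop:FIO-WKB}, the transfer matrix of $P_{\rm Sc}$ in the basis $(w_j^\dir)$ of \eqref{Eq:WKBRotatedSchrodinger} coincides with the transfer matrix of $\tilde P_{\rm Sc}$ in the basis $(v_j^\dir)$ of \eqref{Eq:WKBSchrodingerCaustic}, as the common prefactors $-i$ cancel.

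The first step is to verify that the scalar WKB states $v_j^\dir$ are normalized in the sense of Definition \ref{def:normalized} for $\tilde p_j$. Since $\partial_y \tilde p_j(0,0) = 0$ and $V_j'(0) \neq 0$, one has $\Theta(\tilde p_j) = 0$, so the condition reduces to $(2\pi h)^{-1/2}\int e^{-y^2/(2h)}v_j^\dir(y)\,dy = 1 + \BigO{h}$. Since $\phi_j(0) = \phi_j'(0) = 0$ (from $V_j^{-1}(0) = 0$) and $\sigma_j(0) = |V_j'(0)|^{1/2}|V_j'(0)|^{-1/2} = 1$, a standard stationary phase expansion at $y=0$ yields this identity. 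The phase factor $-i = e^{-i\pi/2}$ inserted in \eqref{Eq:WKBRotatedSchrodinger} exactly compensates the discrepancy $\Theta(p_j) - \Theta(\tilde p_j) = -\pi/2$, so the $w_j^\dir$ are normalized for $P_{\rm Sc}$; this is precisely what the computation displayed below \eqref{Eq:WKBRotatedSchrodinger} records.

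Next, I apply Theorem \ref{Th:LSM} to $\tilde P_{\rm Sc}$ with $m = 2n$, $s = 1$, interaction term $q_j(0,0) = W(0)$, and the data $|\partial_{(y,\eta)}\tilde p_j(0,0)| = |V_j'(0)|$ together with
\begin{equation*}
    H_{\tilde p_1}^{2n}\tilde p_2(0,0) = \frac{(-1)^n(2n)!}{n!}\bigl(V_1'(0)\bigr)^n(V_2 - V_1)^{(n)}(0),
\end{equation*}
which is computed by iterating $H_{\tilde p_1} = V_1'(\eta)\partial_y - 2y\partial_\eta$ against $\tilde p_2$ and evaluating at the crossing.

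The key simplification at this stage is that for even $m = 2n$ one has $(-1)^{m+1} = -1$, hence
\begin{equation*}
    \mu_{2n}(\theta) = \frac{e^{i\theta} + e^{-i\theta}}{2} = \cos\theta,
\end{equation*}
which is real and even in $\theta$. This eliminates the sign dependence inside $\mu_{2n}$ and accounts for the real cosine factor $\cos(\pi/(2(2n+1)))$ in \eqref{Eq:OmegaSchrodingerCaustic}. Inserting the data above into the formulas of Theorem \ref{Th:LSM} for $\omega_1$ and $\omega_2$, and applying the symmetry identity of Lemma \ref{lem:prelim-for-compute} to rewrite $\omega_2$ with $|V_2'(0)|^n$ in place of $|V_1'(0)|^n$, produces \eqref{Eq:OmegaSchrodingerCaustic}. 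The main obstacle is the careful tracking of normalization and phase conventions through the metaplectic conjugation --- in particular, justifying that the prefactor $-i$ in \eqref{Eq:WKBRotatedSchrodinger} correctly renormalizes the WKB basis; once this is settled, the remainder of the argument is algebraic bookkeeping.
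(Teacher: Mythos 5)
Your proposal follows essentially the same route as the paper: rotate by $M_{\pi/2}$ to remove the turning point, verify that the WKB basis \eqref{Eq:WKBRotatedSchrodinger} is normalized (the prefactor $-i$ compensating the discrepancy in $\Theta$), and plug the rotated data into Theorem \ref{Th:LSM}. Whether one applies Theorem \ref{Th:LSM} to $\tilde P_{\rm Sc}$ with basis $(v_j^\dir)$, as you do, or to $P_{\rm Sc}$ with basis $(w_j^\dir)$, as in the text, makes no difference since $|\partial_{(x,\xi)}p_j(0,0)|$, $|H_{p_1}^{2n}p_2(0,0)|$, $s$, and $q_j(0,0)$ are all preserved by the rotation $\kappa(y,\eta)=(\eta,-y)$; the observation that the two transfer matrices coincide because the constant prefactor $-i$ is common to all four basis vectors is also correct.

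The one place to be careful is your final algebraic step for $\omega_2$. You invoke the symmetry identity of Lemma \ref{lem:prelim-for-compute}, namely $H_{p_1}^m p_2(0,0) = -\bigl(|\partial p_1|/|\partial p_2|\bigr)^{m-1}H_{p_2}^m p_1(0,0)$, to ``rewrite $\omega_2$ with $|V_2'(0)|^n$ in place of $|V_1'(0)|^n$''. Applied literally, this identity turns $|H_{p_2}^{2n}p_1(0,0)|$ into $\frac{(2n)!}{n!}\frac{|V_2'(0)|^{2n-1}}{|V_1'(0)|^{n-1}}|(V_2-V_1)^{(n)}(0)|$, and the resulting expression for $\omega_2$ carries a factor $\bigl(|V_1'(0)|^n/|V_2'(0)|^{2n}\bigr)^{1/(2n+1)}$, which is \emph{not} the factor $\bigl(|V_1'(0)|/|V_2'(0)|^{n+1}\bigr)^{1/(2n+1)}$ appearing in \eqref{Eq:OmegaSchrodingerCaustic}; the two differ by $\bigl(|V_1'(0)|/|V_2'(0)|\bigr)^{(n-1)/(2n+1)}$. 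The discrepancy is harmless only because in case \ref{item:Caustic} with $n\ge2$ one has $(V_2-V_1)'(0)=0$, hence $V_1'(0)=V_2'(0)$, which forces that extra factor to equal $1$ (and for $n=1$ the exponent $n-1$ vanishes). Either make that observation explicit, or — more naturally — bypass Lemma \ref{lem:prelim-for-compute} entirely and compute $H_{\tilde p_2}^{2n}\tilde p_1(0,0) = -\frac{(-1)^n(2n)!}{n!}\bigl(V_2'(0)\bigr)^n(V_2-V_1)^{(n)}(0)$ directly, which plugs into the formula of Theorem \ref{Th:LSM} and yields \eqref{Eq:OmegaSchrodingerCaustic} for $\omega_2$ without any intermediate cancellation.
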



\subsection*{Comparison of transfer matrices}
Let us denote $w_j^\dir$ the WKB solutions we defined respectively in \eqref{Eq:WKBSchrodinger} and \eqref{Eq:WKBSchrodingerCaustic} on one hand, and $v_j^\dir$ those of respectively \cite{AFH} and \cite{AFH2} on the other hand. From Theorem \ref{Thm:ExistenceAndUniqueness} we know there exist $\gamma_j^\dir \in \Cc$, $j=1,2$, $\dir \in \{\inc,\out\}$, such that $v_j^\dir \equiv \gamma_j^\dir w_j^\dir$ near $\gamma_j^\dir$. A straightforward computation 
(somewhat similar as \eqref{eq:T-matrix-ori-nf})
gives:
\begin{equation}\label{Eq:TransferMatrixChangeWKB}
    T(w_1^\inc,w_2^\inc,w_1^\out,w_2^\out) = \diag(\gamma_1^\out\ \gamma_2^\out)T(v_1^\inc,v_2^\inc,v_1^\out,v_2^\out)\diag(\gamma_1^\inc\ \gamma_2^\inc)^{-1}.
\end{equation} 
In the case \ref{item:NoCaustic}, it is easy to see that $\gamma_j^\inc = E_0^{-1/4}c_j$ so that $(\gamma_1^\sharp)^{-1} \gamma_2^\flat$ cancels out with the term $|\partial_{(x,\xi)}p_2(0,0)|/|\partial_{(x,\xi)}p_1(0,0)|$ appearing in $\omega_1$. Therefore, our transfer matrix is the same as in \cite{AFH}. In the case \ref{item:Caustic}, a careful computation yields
$\gamma_j^\inc = \sqrt 2 |V_j'(0)|^{-1/2}$ and $\gamma_j^\out = i\sqrt 2|V_j'(0)|^{-1/2}$. This explains the factor $i$ appearing between our transfer matrix and that of \cite{AFH2}. 


\subsection{Notation} We fix some notations. We denote the set of natural numbers by $\Nn=\{0,1,2,\ldots\}$.
We write $D_{x_j}=i^{-1}\partial_{x_j}$. The function space $\mathscr{S}(\Rr^{n})$ denotes the set of all smooth, rapidly decreasing scalar functions on $\Rr^{n}$ and $\mathscr{S}'(\Rr^{n})$ denotes the set of all tempered distributions on $\Rr^{n}$. Let $C^\infty_b(\Rr^n)$ denote the symbol class of smooth functions with bounded derivatives on $\Rr^n$. For a function space $X\subset \mathscr{S}'(\Rr^n)$, let $\mc{M}_N(X)$ with $N\in\Nn\setminus\{0\}$ be the set of $N\times N$-matrix valued functions whose entries belong to $X$. For simplicity, for a $h$-dependent family of functions $a=(a_h)_{0<h\leq h_0}$ with $0<h_0\leq 1$ and a Fr\'echet space $X$, we write $a\in X$ if all the seminorms of $a_h$ in $X$ are uniformly bounded with respect to $0<h\leq h_0$.
For Banach spaces 
$X$ and $Y$,
$B(X,Y)$ denotes the set of all linear bounded operators from $X$ to $Y$. For a Banach space $X$, we denote the norm of $X$ by $\|\cdot\|_{X}$.  We say that a set $S\subset\Rr^2$ is \textit{compactly contained in} another set $T\subset \Rr^2$ and write $S\Subset T$ if the closure of $S$ is compact and contained in $T$. 

\subsection*{Acknowledgment}

The first author was supported by 
Grant-in-Aid for JSPS Fellows
Grant Number JP22KJ2364.
The second author was supported by the Otsuka Toshimi Scholarship Foundation and by the AY2024 Ritsumeikan University Graduate School Research Grant for Doctoral Students.
The third author was supported by Grant-in-Aid for Early-Career Scientists
Grant Number JP23K13004.

\section{Preliminaries}\label{S:Preliminaries}


\subsection{Quantization and symbolic calculus}\label{SubS:QuantizationSymbols}

We briefly recall some basic properties of semiclassical and microlocal analysis to fix the notation.
Please refer to the textbooks \cite{DiSj,Ma,Zwo} for more details. 

The Weyl quantization of a matrix-valued function $a=a(x,\xi)\in\mc{M}_N(C^\infty_b)$ is the semiclassical pseudo-differential operator $a^w(x,hD_x)$ defined by
\begin{equation}\label{Eq:Weyl}
    a^w(x,hD_x)u(x):=
    \frac{1}{(2\pi h)^n}\iint_{T^*\Rr^n}
    e^{\frac ih(x-y)\cdot \xi}
    a\left(\frac{x+y}2,\xi\right)u(y)dyd\xi,
\end{equation}
for vector-valued functions $u\in\mathscr{S}(\Rr^n;\Cc^N)$. The function $a$ is called the \textit{symbol} of the operator $a^w(x,hD_x)$. Let $\Psi_h$ denote the set of semiclassical pseudodifferential operators with scalar symbols in $C^{\infty}_b(\Rr^n;\Cc)$. By the Calder\'on-Vaillancourt theorem (\cite[Theorem 4.23]{Zwo}), each element of $\Psi_h$ extends to a bounded operator on $L^2(\Rr^n;\Cc^N)$. We also define the set $\Psi_h^{\mathrm{comp}}$ that consists of operators $A\in \Psi_h$ whose integral kernels are compactly supported and 
whose
wave front sets (defined in the next subsection) are compact. 

For scalar-valued symbols
$a,b\in C_b^\infty(T^*\Rr^n;\Cc)$, there exists a symbol $a\# b\in C_b^\infty(T^*\Rr^n;\Cc)$ such that $a^w(x,hD_x)b^w(x,hD_x)=(a\# b)^w(x,hD_x)$ and $a\# b=ab+\frac h{2i}\{a,b\}+\BigOO{h^2}{C_b^\infty}$, where $\{a,b\}=\partial_\xi a \cdot\partial_x b -\partial_xa \cdot\partial_\xi b$.

We now state some useful boundedness properties of pseudodifferential operators on $L^{\infty}$, used to study the integral operators $\Gamma_{\pm}$ that appear in Subsection \ref{SubS:simple} (see also Proposition \ref{prop:normbdd}).

\begin{lemma}\label{Lem:L-infty-bdd}
Let $A\in \Psi_h^{\mathrm{comp}}$. Then there exists $C>0$ (independent of $0<h\leq 1$) such that $\|A\|_{L^\infty\to L^\infty}, \|A\|_{W^{1,\infty}\to W^{1,\infty}} \leq C$ for $0<h\leq 1$. Here $W^{1,\infty}$ is the Sobolev space with the norm $\|u\|_{W^{1,\infty}}:=\|u\|_{L^{\infty}}+\sum_{j=1}^n\|D_{x_j}u\|_{L^{\infty}}$. 
\end{lemma}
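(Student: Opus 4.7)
The plan is to establish the $L^\infty$-bound from a uniform-in-$h$ Schur-type estimate on the Schwartz kernel of $A$, and then deduce the $W^{1,\infty}$-bound from a short commutator computation bootstrapping off the $L^\infty$-bound. Since $A\in\Psi_h^{\mathrm{comp}}$ has both compactly supported kernel and compact wavefront set, we may assume its Weyl symbol $a\in C_c^\infty(T^*\Rr^n;\Cc)$ is compactly supported in both $x$ and $\xi$, modulo an $\BigO{h^\infty}$-smoothing remainder whose rapidly decaying kernel is trivially bounded on both $L^\infty$ and $W^{1,\infty}$. Starting from
\[
K_A(x,y) = \frac{1}{(2\pi h)^n} \int_{\Rr^n} e^{\frac{i}{h}(x-y)\cdot\xi}\, a\!\left(\tfrac{x+y}{2}, \xi\right) d\xi,
\]
a standard non-stationary phase argument---repeated integration by parts with the operator $L := -ih|x-y|^{-2}(x-y)\cdot\partial_\xi$, which preserves the exponential---yields, for every $N\in\Nn$, the pointwise bound
\[
|K_A(x,y)| \leq C_N\, h^{-n}\, \langle (x-y)/h\rangle^{-N}.
\]

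Choosing $N>n$ and changing variables $z=(x-y)/h$ produces $\sup_x \int |K_A(x,y)|\,dy \leq C$ uniformly in $h$, and the pointwise estimate $|Au(x)| \leq \|u\|_{L^\infty}\int |K_A(x,y)|\,dy$ gives the $L^\infty$-bound. For the $W^{1,\infty}$-bound I would compute $[D_{x_j}, A]$ directly from the oscillatory integral, integrating by parts in $y$ inside $A(D_{x_j}u)$; the two $\xi_j/h$-contributions cancel and one obtains
\[
[D_{x_j}, A] = \tfrac{1}{i}(\partial_{x_j}a)^w(x, hD_x).
\]
Equivalently, this is the Moyal product of the symbol $\xi_j$ of $hD_{x_j}$ with $a$, which terminates because $\xi_j$ is linear in $\xi$; note that no extra factor of $h$ appears since $D_{x_j}$ is not semiclassically rescaled. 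As $\partial_{x_j}a\in C_c^\infty(T^*\Rr^n)$, the commutator again lies in $\Psi_h^{\mathrm{comp}}$ and is $L^\infty$-bounded uniformly in $h$ by the previous step. The identity $D_{x_j}(Au) = A(D_{x_j}u) + [D_{x_j},A]u$ then gives $\|D_{x_j}(Au)\|_{L^\infty}\leq C\bigl(\|D_{x_j}u\|_{L^\infty}+\|u\|_{L^\infty}\bigr)$, and combined with the $L^\infty$-bound on $Au$ this yields the $W^{1,\infty}$-bound.

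The argument is entirely standard and presents no genuine obstacle; it is essentially a soft variant of the Calder\'on--Vaillancourt argument, made easy by the compact support of the symbol. The only mildly technical step is the reduction to a compactly supported representative of $a$, which is handled by multiplying by a cutoff $\chi\in C_c^\infty(T^*\Rr^n)$ equal to $1$ on a neighborhood of the wavefront set of $A$ and absorbing the negligible tail into the constant.
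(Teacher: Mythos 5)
Your proposal is correct and follows essentially the same strategy as the paper: a uniform-in-$h$ kernel estimate via non-stationary phase (integration by parts) for the $L^\infty$ bound, followed by the commutator identity $[D_{x_j},A]=\tfrac{1}{i}(\partial_{x_j}a)^w(x,hD_x)$ to bootstrap the $W^{1,\infty}$ bound. (The paper's displayed commutator formula contains a small typo, writing $\partial_{\xi_j}a$ where it should read $\partial_{x_j}a$ as you have; the paper uses the corrected form in the final estimate.)
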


\begin{proof}
We write $A=a^w(x,hD_x)$ with $a\in \ms{S}(\Rr^n)$.
First, we prove the uniform boundedness of $\|A\|_{L^{\infty}\to L^{\infty}}$.
The proof is essentially the same as \cite[Theorem 7.15]{Zwo} that deals with Fourier multipliers. The integral kernel of $A$ is given by $K_h(x,y)=\frac{1}{(2\pi h)^n}\int_{\Rr^n}a\left(\frac{x+y}{2},\xi\right)e^{\frac{i}{h}(x-y)\cdot \xi}d\xi$. An integration by parts shows $|K_h(x,y)|\leq Ch^{-n}(1+|x-y|/h)^{-n-1}$ with a constant $C>0$ independent of $x,y,h$. By Young's inequality, we have $\|Au\|_{L^{\infty}}\leq Ch^{-n}\|(1+|x|/h)^{-n-1}\|_{L^1} \|u\|_{L^{\infty}}= C\|(1+|x|)^{-n-1}\|_{L^1}\|u\|_{L^{\infty}} $, which proves the uniform boundedness of $\|A\|_{L^\infty\to L^\infty}$. 
Since $[D_{x_j},A]=-i(\pa_{\xi_j}a)^w(x,hD_x)$ and $\pa_{\xi_j}a\in\ms{S}(\Rr^n)$,
\begin{align*}
\|D_{x_j}Au\|_{L^{\infty}}\leq& \|AD_{x_j}u\|_{L^{\infty}}+\|[D_{x_j},A]u\|_{L^{\infty}}\\
\leq& \left(\|A\|_{L^{\infty}\to L^{\infty}}+\|(\pa_{x_j}a)^w(x,hD_x)\|_{L^{\infty}\to L^{\infty}} \right)\|u\|_{W^{1,\infty}}\leq C\|u\|_{W^{1,\infty}},
\end{align*}
which proves the uniform boundedness of $\|A\|_{W^{1,\infty}\to W^{1,\infty}}$.
\end{proof}

\subsection{Microlocalization and wavefront set}\label{SubsubS:WFh}

\begin{definition}\label{Def:MicrolocallySmall}
Let a family $f=\{f_h\}_{h\in(0,h_0]} \subset L^2(\Rr^n;\mathbb{C}^N)$ be \textbf{\textit{tempered} in $h$}, that is, there exists $k\ge0$ such that  $f_h=\BigOO{h^{-k}}{L^2}$ as $h\to 0$. 

\noindent$(i)$ We say $f$ is \textbf{\textit{microlocally infinitely small}} near a point $(x_0,\xi_0)\in T^*\Rr^n$ and write
\begin{equation*}
    f\equiv0\quad\text{m.l. near }(x_0,\xi_0),
\end{equation*} 
if there exists a function $\chi\in \mc{M}_N(C^\infty_b)$ such that $\chi(x_0,\xi_0)=1$ and $\left\|\chi^w(x,hD_x)f(h)\right\|_{L^2(\Rr^n)}=\mc{O}(h^\infty)$.
We also write $f\equiv0\,\,\text{m.l. on } \Omega$ if it is microlocally infinitely small near each point of $\Omega$. The \textbf{\textit{semiclassical wavefront set}} $\operatorname{WF}_h(f)\subset T^*\Rr^n$ is defined as the set of points where $f$ is not microlocally infinitely small. We say that $f$ is \textbf{\textit{microlocalized}} on a set $U$ whenever $\operatorname{WF}_h(f)\subset U$.

\noindent$(ii)$ Let $A = a^w(x,hD_x)$ be a pseudo-differential operator with a symbol $a\in \mc{M}_N(C^\infty_b)$. We say $f$ is a \textbf{\textit{microlocal solution}} to $Af=0$ on $\Omega\subset \Rr^{2n}$ if $Af\equiv 0$ m.l. on $\Omega$.

\noindent$(iii)$ We say that $f$ is \textbf{\textit{compactly microlocalized}} whenever there exists $\chi\in \mc{M}_N(C^\infty_c)$ independent of $h$ such that $(1 - \chi^w(x,hD_x))f = \BigOO{h^\infty}{\ms{S}(\Rr^n)}$.

\end{definition}

\begin{rmk}
If $f\equiv0$ m.l. near $(x_0,\xi_0)\in T^*\Rr^n$, then there exists a neighborhood $(x_0,\xi_0)\in U\subset T^*\Rr^n$ such that $\left\|\chi^w(x,hD_x)f(h)\right\|_{L^2(\Rr^n)}=\BigO{h^\infty}$ for any $\chi\in \mc{M}_N(C^\infty_c)$ whose support is contained in $U$ (see \cite[Theorem 8.13]{Zwo}).

\end{rmk}

\begin{definition}



\noindent$(i)$ 
We say a family $T=\{T_h\}_{h\in(0,h_0]}$ of continuous linear operators $T_h:\mathscr{S}\to \mathscr{S}'$ is \textbf{\textit{tempered}} if there exists $N\ge0$ such that $\left\|\mathscr{L}_{-N}T_h\mathscr{L}_{-N}\right\|=\BigOO{h^{-N}}{L^2\to L^2}$ as $h\to 0$, where $\mathscr{L}_{-N}:=(1+|x|^2)^{-N}(1+|hD_x|^2)^{-N}$.

\noindent$(ii)$ Let  $T=\{T_h\}_{h\in(0,h_0]}$ and $S=\{S_h\}_{h\in(0,h_0]}$ be tempered families of operators and $z$ and $w$ in $T^*\Rr^n$. We write $T\equiv S$ m.l. near $(z,w)$ if there exist neighborhoods $U$ and $V$ of $z$ and $w$ such that
\begin{equation*}
    B(T-S)A=\BigOO{h^\infty}{\mathscr{S}\to\mathscr{S}}.
\end{equation*} 
for any pair of pseudodifferential operators $A=a^w(x,hD_x)$ and $B=b^w(x,hD_x)$ with $\supp a\subset U'$ and $\supp b\subset V'$.

\noindent$(iii)$ We say that a tempered family $T=\{T(h)\}_{h\in(0,h_0]}$ of operators is \textbf{\textit{microlocally invertible}} near  
$z$ and $w$ in $T^*\Rr^n$ if there exist neighborhoods $U$ and $V$ of $z$ and $w$ and
a tempered family $S=\{S_h\}_{(0,h_0]}$ of operators such that
\begin{equation*}
    ST\equiv I\quad\text{m.l. near }(z,z)\quad\text{and}\quad
    TS\equiv I\quad\text{m.l. near }(w,w).
\end{equation*}
Then $S$ is called \textbf{\textit{a microlocal inverse}} near 
$(z,w)$
of $T$.
We denote $S$ simply by $T^{-1}$.

\end{definition}


\begin{lemma}\label{lem:Psicomp}
For any $A\in \Psi_h$ and any compact set $K\subset T^*\Rr^n$, there exists $A'\in \Psi_h^{\mathrm{comp}}$ such that $A\equiv A'$ m.l. on $K\times K$.
\end{lemma}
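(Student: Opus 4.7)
The plan is to modify $A=a^w(x,hD_x)$, with $a\in C^\infty_b(T^*\Rr^n;\Cc)$, in two successive steps: first truncate the symbol to a compact set so that the wavefront set becomes compact, then conjugate by compactly supported position-space cutoffs so that the integral kernel becomes compactly supported. The single non-trivial ingredient is the classical fact that the Moyal product $b\#c$ of two symbols with disjoint (essential) supports is $\BigOO{h^\infty}{\ms{S}}$, proved by non-stationary phase in the oscillatory integral defining $\#$.

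For the first step, fix $\chi\in C^\infty_c(T^*\Rr^n)$ with $\chi\equiv 1$ on an open neighborhood of $K$, and set $A_1:=(\chi a)^w(x,hD_x)$. Since $\chi a\in C^\infty_c$, the operator $A_1$ has compact wavefront set, while the symbol of $A-A_1$ vanishes on a neighborhood of $K$. Hence for any $B=b^w$ and $C=c^w$ whose symbols are supported in sufficiently small neighborhoods of points of $K$, disjointness of supports gives $B(A-A_1)C=\BigOO{h^\infty}{\ms{S}\to\ms{S}}$, i.e.\ $A\equiv A_1$ m.l.\ on $K\times K$.

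For the second step, pick $\phi\in C^\infty_c(\Rr^n)$ with $\phi\equiv 1$ on a neighborhood of $\pi_x(\supp\chi)$, where $\pi_x:T^*\Rr^n\to\Rr^n$ is the base projection, and define $A':=M_\phi A_1 M_\phi$, with $M_\phi$ the operator of multiplication by $\phi$. The integral kernel of $A'$ equals $\phi(x)K_{A_1}(x,y)\phi(y)$ and is compactly supported, while its symbol $\phi\#(\chi a)\#\phi$ lies in $C^\infty_b$ with essential support inside $\supp\chi$, so $A'\in\Psi_h^{\mathrm{comp}}$.

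It remains to check $A_1\equiv A'$ m.l.\ on $K\times K$. Decomposing $A_1-A'=(I-M_\phi)A_1+M_\phi A_1(I-M_\phi)$, if the test symbols $b,c$ are supported close enough to $K$ that $\pi_x(\supp b),\pi_x(\supp c)\subset\pi_x(\supp\chi)$, then $1-\phi$ vanishes on those projections, so both $b\#(1-\phi)$ and $(1-\phi)\#c$ are $\BigOO{h^\infty}{\ms{S}}$, yielding $B(A_1-A')C=\BigOO{h^\infty}{\ms{S}\to\ms{S}}$. Combining with the first step, $A\equiv A_1\equiv A'$ m.l.\ on $K\times K$. The main obstacle is merely the careful bookkeeping of the two nested cutoffs $\chi$ and $\phi$; the underlying Moyal-product estimate is completely standard.
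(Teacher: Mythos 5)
Your proof is correct and amounts to a careful unpacking of the ``pseudolocal property'' that the paper merely cites without detail. The two nested cutoffs (a phase-space cutoff $\chi$ to make the wavefront set compact, followed by base cutoffs $M_\phi$ to make the integral kernel compactly supported) are exactly what the definition of $\Psi_h^{\mathrm{comp}}$ in the paper requires, and both reductions rest on the same standard disjoint-support / non-stationary-phase estimate for the Moyal product.
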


\begin{proof}
This follows from the the pseudolocal property of the pseudodifferential operators.
\end{proof}

\begin{prop}[Elliptic parametrix]\label{prop:elliptic}
Let $A=a^w(x,hD_x)$ be a pseudodifferential operator with a matrix-valued symbol $a\in \mc{M}_N(C_b^\infty)$.

\noindent$(i)$
If the inequality $\left|\det a\right|\ge c$ holds on $U$ with an $h$-independent $c>0$, 
then $A$ is microlocally invertible near $U\times U$.
Moreover, the symbol of each microlocal inverse of $A$ coincides modulo $\BigOO{h}{\mc{M}_N(C_b^\infty)}$ with $a^{-1}$ on $U$.

\noindent$(ii)$
Let $u$ be a microlocal solution near $U\subset T^*\Rr^n$ to the pseudodifferential equation $Au(x)=0$. 
The first statement implies that one has
\begin{equation*}
    \operatorname{WF}_h(u)\cap U\subset \{\det a=\BigO{h^\infty}\}.
\end{equation*}

\end{prop}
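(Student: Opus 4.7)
The plan for $(i)$ is the standard elliptic parametrix construction adapted to the matrix-valued semiclassical setting. First, I would pick a smooth cutoff $\chi \in C_c^\infty(T^*\Rr^n;\Cc)$ with $\chi \equiv 1$ on a slight shrinking $U' \Subset U$ and $\supp\chi$ contained in a neighborhood $\tilde U$ of $\overline U$ on which the hypothesis $|\det a| \geq c/2$ still holds (by continuity, shrinking $\tilde U$ if necessary). On $\tilde U$ the pointwise matrix inverse $a^{-1}$ is well-defined and, together with its derivatives, uniformly bounded in $h$, so $b_0 := \chi\, a^{-1} \in \mc{M}_N(C_b^\infty)$. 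Using the symbolic calculus expansion $a \# b_0 = a b_0 + \frac{h}{2i}\{a,b_0\} + \BigO{h^2}$, one obtains
\begin{equation*}
a \# b_0 = \chi\, I_N + h\, r_1, \qquad r_1 \in \mc{M}_N(C_b^\infty).
\end{equation*}

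Next, I would iterate by solving $a \# b \sim I_N$ microlocally near $U$ as a formal Neumann series in $h$. Setting $b \sim b_0 \# \sum_{k \geq 0} (-h\, r_1)^{\# k}$, each term is an order of $h$ smaller than the previous, so by Borel's summation theorem there exists a symbol $b \in \mc{M}_N(C_b^\infty)$ realizing this asymptotic sum. By construction
\begin{equation*}
a \# b \equiv \chi\, I_N \pmod{h^\infty \mc{M}_N(C_b^\infty)},
\end{equation*}
which, together with $\chi \equiv 1$ on $U'$, shows that $B := b^w(x,hD_x)$ is a right microlocal inverse of $A$ on $U' \times U'$. A symmetric construction produces a left microlocal inverse, and by a standard argument they agree modulo $\BigO{h^\infty}$, so $B$ is a two-sided microlocal inverse. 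Since the principal symbol of $b$ is $b_0 = a^{-1}$ on $U'$, the \emph{moreover} part follows. Exhausting $U$ by such relatively compact sets $U'$ concludes $(i)$.

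For $(ii)$, fix $(x_0,\xi_0) \in U$ not in the set $\{\det a = \BigO{h^\infty}\}$. By definition this means there exists $c > 0$ and a neighborhood $V \subset U$ of $(x_0,\xi_0)$ on which $|\det a| \geq c$ uniformly in $h$ (passing if necessary to a suitable refinement of the notion of wavefront set; the precise reading is that $(x_0,\xi_0)$ does not admit a neighborhood on which $\det a$ is $\BigO{h^\infty}$). Applying $(i)$ to $a$ restricted to $V$ yields a microlocal inverse $B$ of $A$ near $V \times V$, and composing with $Au \equiv 0$ m.l.\ on $\Omega$ gives
\begin{equation*}
u \equiv B A u \equiv 0 \quad \text{m.l. near } (x_0,\xi_0),
\end{equation*}
so $(x_0,\xi_0) \notin \operatorname{WF}_h(u)$, proving the inclusion.

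The main technical point is the Borel summation of the formal Neumann series and the verification that the iterated remainders stay in the symbol class $\mc{M}_N(C_b^\infty)$ with uniform seminorm control; the matrix nature of $a$ only enters through the non-commutativity of the $\#$-product, but this causes no obstruction because the left and right parametrix constructions are performed independently and then matched. The passage from uniform pointwise invertibility to microlocal inversion of the operator itself is then routine, relying on Lemma~\ref{lem:Psicomp} to reduce to the compactly supported setting where the Calder\'on--Vaillancourt theorem applies.
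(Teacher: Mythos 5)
Your proposal is correct and follows exactly the standard elliptic parametrix construction that the paper's proof simply cites (to \cite[Theorem 4.29]{Zwo}) and omits; you have filled in the routine details (cutoff, principal inversion, Neumann iteration via Borel summation, left/right matching, and for (ii) the composition $u\equiv BAu\equiv 0$) in the way the authors had in mind.
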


\begin{proof}
This follows from the usual parametrix construction (for example, see \cite[Theorem 4.29]{Zwo}). We omit the details.
\end{proof}

\begin{ex}\cite[Section 8.4, Examples]{Zwo}\label{ex:WF}
    One has $\WF_h(\sigma(x)e^{ih^{-1}\phi(x)}) \subset \{(x,\partial_x\phi(x));\,x\in\operatorname{supp}\sigma\}$ for $\sigma=\sigma(x)\in C^\infty(\Rr^n)\cap L^2(\Rr^n)$ and $\phi=\phi(x)\in C^\infty(\Rr^n;\Rr)$ (real-valued).
\end{ex}

\subsection{Fourier integral operators}
We review some properties of semiclassical Fourier integral operators.
We refer to \cite{Zwo,NZ} for proofs, details, and related topics.

\begin{definition}\label{def:FIO}
Let $\kappa$ be a \textbf{\textit{symplectomorphism}}
near $(0,0)$, that is to say a diffeomorphism between neighborhoods of $(0,0)\in T^*\Rr^n$ preserving the standard symplectic form $d\xi\wedge dx$ and satisfying $\kappa(0,0)=(0,0)$.

\noindent$(i)$ Let $F$ be a bounded operator on $L^2(\Rr^n)$.
We say that the operator $F$ \textbf{\textit{quantizes $\kappa$}} near $(0,0)$ if for all $a\in C_b^{\infty}(T^*\Rr^n)$, we have
\begin{align*}
a^w(x,hD_x)F=Fb^w(x,hD_x)\quad \text{m.l. near $((0,0),(0,0))$}
\end{align*}
where $b\in C_b^{\infty}(T^*\Rr^n)$ satisfies $b=\kappa^*a+\BigOO{h}{C_b^\infty}$.

\noindent$(ii)$  Assume that the projection 
\begin{equation}\label{eq:projdiffeo}
    \left\{(x,\xi,y,\eta)\in T^*\Rr^n\times T^*\Rr^n;\,\kappa(y,\eta)=(x,\xi)\right\}\ni(x,\xi,y,\eta)\mapsto (x,\eta)\in\Rr^n\times\Rr^n
\end{equation}
is a diffeomorphism near $(0,0)$\footnote{This condition is equivalent to the $n\times n$ block $\frac{\pa x}{\pa y}$ in the derivative $d\ka(0,0)$ being invertible, where $(x,\x)=\ka(y,\y)$.}.
A smooth function $\psi$ defined near $(0,0)\in \Rr^n\times \Rr^n$ is called a \textbf{\textit{generating function}}\footnote{A generating function always exists due to the assumption that the projection \eqref{eq:projdiffeo} is a diffeormorphsim near $(0,0)$ due to the implicit function theorem and Poincar\'e's lemma.} of $\kappa$ if for $(x,\eta)$ near $(0,0)$, 
\begin{equation}\label{eq:generating-fn}
    \kappa(\partial_\eta \psi(x,\eta),\eta)=(x,\partial_x \psi(x,\eta)),\quad
    \det \partial_x\partial_\eta \psi(x,\eta)\neq0,\quad
    \psi(0,0)=0.
\end{equation}
\noindent$(iii)$ Under the same assumption as $(ii)$, a bounded operator $F=F_{h}$ on $L^2(\Rr^n)$ is called a \textbf{\textit{Fourier integral operator}} associated with $\kappa$ \footnote{We can still define the notion of Fourier integral operators without the assumption on the projection \eqref{eq:projdiffeo}. However, the general definition is a bit complicated to state and not needed in this paper.} if $F$ is of the form
\begin{equation}\label{eq:def-FIO}
    F u(x)
    =\frac1{(2\pi h)^n}\iint_{\Rr^{2n}}e^{\frac ih (\psi(x,\eta)-y\cdot\eta)}\alpha(x,\eta)u(y)dyd\eta,
\end{equation}
with a function $\alpha(x,\eta)\in C_c^\infty(\Rr^n\times\Rr^n)$ supported near $(0,0)$ where \eqref{eq:generating-fn} holds and $\alpha\sim \sum_{j=0}^{\infty}h^j\alpha_j$. We say that $F$ is \textbf{microlocally unitary} at $((0,0),(0,0))$ whenever 
\begin{align}\label{eq:mlunitary}
\alpha_0(0,0)=\left|\partial_x\partial_\eta\psi(0,0))\right|^{1/2}
\end{align}
holds\footnote{As is stated in \cite[$(4.5)$]{NZ}, if $F^*F\equiv I$ m.l. near $((0,0),(0,0))$, then $|\alpha_0(0,0)|=\left|\partial_x\partial_\eta\psi(0,0))\right|^{1/2}$ holds. Here, we impose the stronger condition \eqref{eq:mlunitary} just for simplicity.}.
\end{definition}

\begin{prop}\label{Prop:FIO-conjugate}
Let $\kappa$ be a symplectomorphism near $(0,0)$. Assume that the projection \eqref{eq:projdiffeo} is a diffeomorphism near $(0,0)$. Then, the following holds:

\noindent$(i)$ $($Egorov's theorem$)$
Each Fourier integral operator $F$ with $\alpha_0(0,0)\neq 0$ quantizes $\kappa$ near $(0,0)$. Conversely, if $F$ is a bounded operator on $L^2(\Rr^n)$ quantizing $\kappa$ near $(0,0)$, then $F$ is a Fourier integral operator associated with $\kappa$.

\noindent$(ii)$ If a Fourier integral operator $F$ associated with $\kappa$ is microlocally unitary at $((0,0),(0,0))$ and $A=a^w(x,hD_x)\in \Psi_h$ satisfies $a(0,0)=1$, then we can find a Fourier integral operator $F'$ such that $FA\equiv F'$ m.l. near $((0,0),(0,0))$ and $F'$ is a Fourier integral operator associated with $\kappa$ which is microlocally unitary at $((0,0),(0,0))$.

\end{prop}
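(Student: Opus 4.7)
The plan is to handle parts (i) and (ii) in turn, using the composition formula and the stationary phase method as the main tools. For the forward direction of (i), I would apply the composition formula for the Weyl quantization and the FIO $F$: for $u \in \mathscr{S}(\Rr^n)$,
\begin{equation*}
    a^w(x,hD_x) F u(x) = \frac{1}{(2\pi h)^{2n}} \iiint e^{\frac{i}{h}[(x-x')\cdot \xi + \psi(x',\eta) - y\cdot\eta]} a\left(\frac{x+x'}{2}, \xi\right) \alpha(x',\eta) u(y)\, dy\, dx'\, d\eta\, d\xi.
\end{equation*}
The stationary phase method in $(x',\xi)$ around the critical point $(x, \partial_x\psi(x,\eta))$ reduces this to an FIO with phase $\psi$ and leading amplitude $a(x,\partial_x\psi(x,\eta))\alpha(x,\eta)$. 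Setting $b:=\kappa^* a$, the identities \eqref{eq:generating-fn} give $b(\partial_\eta\psi(x,\eta),\eta)=a(x,\partial_x\psi(x,\eta))$, and a parallel computation shows that $Fb^w$ admits the same leading amplitude. Iterating order by order in $h$ yields $a^w F = F b^w$ m.l.\ with $b = \kappa^* a + \BigOO{h}{C_b^\infty}$. For the converse, one constructs an FIO $\widetilde{F}$ of the form \eqref{eq:def-FIO} with the prescribed phase $\psi$ and principal amplitude matching that of $F$, then successively corrects the amplitude in powers of $h$ so that $F - \widetilde F$ has vanishing full symbol; this is a classical argument for which I would refer to \cite{Zwo,NZ} rather than reproduce it.

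For part (ii), the strategy is to compute $FA$ directly. Substituting the Weyl representation of $A$ into \eqref{eq:def-FIO} yields a quadruple oscillatory integral with phase $\psi(x,\eta) - y\cdot\eta + (y-z)\cdot\zeta$ and amplitude $\alpha(x,\eta)\, a((y+z)/2,\zeta)$. Applying the stationary phase method in $(y,\zeta)$ around the critical point $y = z$, $\zeta = \eta$ reduces this to an integral with phase $\psi(x,\eta) - z\cdot\eta$ and amplitude $\alpha(x,\eta)\, a(z,\eta) + \BigO{h}$. To bring this back to the FIO form \eqref{eq:def-FIO}, I would Taylor expand $a(z,\eta)$ around $z = \partial_\eta\psi(x,\eta)$ and absorb the terms $(z-\partial_\eta\psi(x,\eta))^k$ via the identity $(z-\partial_\eta\psi(x,\eta))\, e^{\frac{i}{h}(\psi(x,\eta)-z\cdot\eta)} = -\frac{h}{i}\partial_\eta e^{\frac{i}{h}(\psi(x,\eta)-z\cdot\eta)}$ followed by integration by parts. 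This produces an FIO $F'$ with new amplitude $\alpha'(x,\eta) = \alpha(x,\eta)\, a(\partial_\eta\psi(x,\eta),\eta) + \BigO{h}$. Since $\kappa(0,0)=(0,0)$ and $\psi$ is a generating function satisfying \eqref{eq:generating-fn}, one has $\partial_\eta\psi(0,0)=0$. Combined with $a(0,0)=1$ and $\alpha_0(0,0)=|\partial_x\partial_\eta\psi(0,0)|^{1/2}$, this gives $\alpha'_0(0,0)=|\partial_x\partial_\eta\psi(0,0)|^{1/2}$, confirming the microlocal unitarity of $F'$ at $((0,0),(0,0))$.

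The main technical obstacle is the converse direction of Egorov's theorem in part (i): making rigorous the intuition that an operator intertwining $a^w$ and $(\kappa^* a)^w$ must itself be an FIO requires a careful successive approximation argument. This is classical however, and I would simply invoke the corresponding statements in \cite{Zwo,NZ}. The remaining computations, namely the forward direction of Egorov and the composition in (ii), are both reduced to routine applications of the stationary phase formula once the integrals are set up.
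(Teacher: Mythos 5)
Your proof is correct and follows the standard argument that the paper itself invokes by citation (to \cite{NZ} around $(4.6)$ and to \cite[Lemma 3.4]{SjZw}); the paper omits all details. The stationary-phase reduction of $a^w F$ and $F A$ to FIO form, the use of the generating-function relations to match leading amplitudes, and the observation $\partial_\eta\psi(0,0)=0$ (forced by $\kappa(0,0)=(0,0)$ together with the projection being a diffeomorphism) are all sound and are the content of the cited references.
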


\begin{proof}
The proof is standard and the first statement is given around \cite[$(4.6)$]{NZ} (see also \cite[Lemma 3.4]{SjZw}). We omit the details.

\end{proof}

The following proposition is a modified version of \cite[Lemma 4.1]{NZ}, and provides a kind of composition formula for Fourier integral operators.

\begin{prop}\label{Prop:FIO-WKB}
Suppose $n=1$. Let $p,\phi,\tilde{\phi}\in C^{\infty}(T^*\Rr)$ be real-valued functions and $\kappa$ be a symplectomorphism near $(0,0)$ such that

\begin{itemize}
\item $\phi(0)=\tilde{\phi}(0)=\phi'(0)=\tilde{\phi}'(0)=p(0,0)=0$ and $\partial_{(x,\xi)}p(0,0)\neq 0$;

\item $p(x,\phi'(x))=(\kappa^*p)(x,\tilde{\phi}'(x))=0$ around $x=0$\footnote{These conditions with $\partial_{(x,\xi)}p(0,0)\neq 0$ imply $\partial_{\xi}p(0,0)\neq 0$ and $\partial_{\xi}(\kappa^*p)(0,0)\neq 0$. Conversely, if $\partial_{\xi}p(0,0)\neq 0$ and $\partial_{\xi}(\kappa^*p)(0,0)\neq 0$ are satisfied, then such phase functions $\phi,\tilde{\phi}$ exist due to the implicit function theorem and Poincar\'e's lemma.};

\item the projection \eqref{eq:projdiffeo} is a diffeomorphism near $(0,0)$.

\end{itemize}
Suppose that a bounded operator $F$ on $L^2(\Rr^n)$ is a Fourier integral operator associated with $\kappa$ and is microlocally unitary at $((0,0),(0,0))$ in the sense of Definition \ref{def:FIO} $(iii)$. 
Then, for $\tilde{\sigma} = \tilde{\sigma}_h \in C_b^{\infty}(\Rr)$ satisfying $\tilde{\sigma}(x)=\tilde{\sigma}_0(x)+\BigOO{h}{C_b^{\infty}}$ with $\tilde{\sigma}_0\in C_b^{\infty}(\Rr)$ independent of $h$, we have
    \begin{equation}\label{eq:Fkappaact}
        F (e^{\frac ih \tilde{\phi}(x)}\tilde{\sigma})(x)= e^{\frac ih \phi(x)}\sigma(x)
    \end{equation}
    with a function $\sigma=\sigma_h\in C_b^{\infty}(\Rr)$ satisfying $\sigma(x)=\sigma_0(x)+\BigOO{h}{C_b^{\infty}}$ with $\sigma_0\in C_b^{\infty}(\Rr)$ independent of $h$. Moreover,
    \begin{equation}\label{eq:WKBFIO}
\sigma_0(0)=e^{\frac{\pi i}{4}\upsilon}\left|\frac{\partial_\xi(\kappa^*p)(0,0)}{\partial_\xi p(0,0)}\right|^{1/2}\tilde{\sigma}_0(0).
    \end{equation}Here, the index $\upsilon$ is given by
\begin{equation*}
\upsilon=\operatorname{sgn}\partial_{(y,\eta)}^2(\tilde{\phi}(y)-y\cdot\eta+\psi(0,\eta))|_{(y,\eta)=(0,0)},
\end{equation*}
where $\psi$ is a generating function of $\kappa$.
\end{prop}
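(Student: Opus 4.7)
The plan is to apply $F$ in its oscillatory integral form \eqref{eq:def-FIO} to the WKB state $e^{i\tilde{\phi}/h}\tilde{\sigma}$ and evaluate the resulting double integral by stationary phase in $(y,\eta)$. We obtain
\begin{equation*}
F(e^{\frac{i}{h}\tilde{\phi}}\tilde{\sigma})(x)=\frac{1}{2\pi h}\iint_{\Rr^2} e^{\frac{i}{h}\Phi(x,y,\eta)}\alpha(x,\eta)\tilde{\sigma}(y)\,dy\,d\eta,\qquad \Phi(x,y,\eta):=\psi(x,\eta)-y\eta+\tilde{\phi}(y).
\end{equation*}
The critical equations in $(y,\eta)$ are $y=\partial_\eta\psi(x,\eta)$ and $\eta=\tilde{\phi}'(y)$; at $x=0$ the unique critical point is $(0,0)$, so the implicit function theorem produces a smooth branch $(y_c(x),\eta_c(x))$ near $0$. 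Standard stationary phase on a compactly supported amplitude yields an expansion in $C_b^\infty$, giving the form \eqref{eq:Fkappaact} with $\sigma\sim\sum_j h^j\sigma_j\in C_b^\infty$.

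The first key step is to identify the critical value of $\Phi$ with $\phi(x)$. Using the generating-function identity $\kappa(\partial_\eta\psi(x,\eta),\eta)=(x,\partial_x\psi(x,\eta))$, the curve $y\mapsto \kappa(y,\tilde{\phi}'(y))$ parametrizes $\{p=0\}$ near $(0,0)$ because $(\kappa^*p)(y,\tilde{\phi}'(y))=0$. Since $\partial_\xi p(0,0)\neq 0$, this curve coincides with $x\mapsto (x,\phi'(x))$, so along the critical branch one has $\partial_x\psi(x,\eta_c(x))=\phi'(x)$. Differentiating $\Phi_c(x):=\Phi(x,y_c(x),\eta_c(x))$ and using the critical equations gives $\Phi_c'(x)=\partial_x\psi(x,\eta_c(x))=\phi'(x)$, and $\Phi_c(0)=\psi(0,0)+\tilde{\phi}(0)=0=\phi(0)$, hence $\Phi_c\equiv\phi$ near $0$.

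The second step is to read off the leading amplitude. The Hessian of $\Phi$ in $(y,\eta)$ at the critical point is
\begin{equation*}
H=\begin{pmatrix} \tilde{\phi}''(y_c)&-1\\ -1&\psi_{\eta\eta}(x,\eta_c)\end{pmatrix},\qquad \det H=\tilde{\phi}''(y_c)\psi_{\eta\eta}(x,\eta_c)-1,
\end{equation*}
so by stationary phase, with microlocal unitarity \eqref{eq:mlunitary} giving $\alpha_0(0,0)=|\psi_{x\eta}(0,0)|^{1/2}$,
\begin{equation*}
\sigma_0(0)=e^{\frac{i\pi}{4}\upsilon}\frac{|\psi_{x\eta}(0,0)|^{1/2}}{|\tilde{\phi}''(0)\psi_{\eta\eta}(0,0)-1|^{1/2}}\tilde{\sigma}_0(0),
\end{equation*}
where the signature $\upsilon$ is continuous in $x$ and hence equals its value at $0$, agreeing with the formula in the statement.

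The main obstacle, and the last step, is a purely geometric identity relating the Hessian determinant to the ratio $\partial_\xi(\kappa^*p)/\partial_\xi p$. Since $d\kappa(0,0)$ sends $H_{\kappa^*p}(0,0)$ to $H_p(0,0)$, differentiating $\kappa(\partial_\eta\psi(x,\eta),\eta)=(x,\partial_x\psi(x,\eta))$ at $(0,0)$ gives the entries of $d\kappa(0,0)$ in terms of the second derivatives of $\psi$: writing $\kappa=(X,\Xi)$, one finds $\partial_yX(0,0)=1/\psi_{x\eta}(0,0)$ and $\partial_\eta X(0,0)=-\psi_{\eta\eta}(0,0)/\psi_{x\eta}(0,0)$. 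The first component of $d\kappa(0,0)H_{\kappa^*p}(0,0)=H_p(0,0)$, together with $\tilde{\phi}''(0)=-\partial_x(\kappa^*p)(0,0)/\partial_\xi(\kappa^*p)(0,0)$, then yields
\begin{equation*}
\psi_{x\eta}(0,0)\,\partial_\xi p(0,0)=\bigl(1-\tilde{\phi}''(0)\psi_{\eta\eta}(0,0)\bigr)\partial_\xi(\kappa^*p)(0,0),
\end{equation*}
from which \eqref{eq:WKBFIO} follows.
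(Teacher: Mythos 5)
Your proof is correct, and it reaches the same formula \eqref{eq:WKBFIO} as the paper, but by a somewhat different technical path. The paper outsources the stationary-phase computation to \cite[Lemma 4.1]{NZ}, which already gives the abstract amplitude formula
\begin{equation*}
\sigma_0(x)=e^{\frac{\pi i}{4}\upsilon}\,\frac{\alpha_0(x,\eta_c(x))}{\left|\partial_x\partial_\eta\psi(x,\eta_c(x))\right|^{1/2}}\left|g'(x)\right|^{1/2}\,\tilde{\sigma}_0(g(x)),
\end{equation*}
with $g(x)$ the $x$-axis projection of $\kappa^{-1}$ restricted to the Lagrangian; the only remaining work is the identity $g'(0)=\partial_\xi(\kappa^*p)(0,0)/\partial_\xi p(0,0)$, which the paper proves by a short Poisson-bracket argument using the function $q$ with $\kappa^*q=x$. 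You instead redo the $(y,\eta)$-stationary phase from scratch, so that the amplitude appears with the explicit Hessian determinant $|\tilde{\phi}''(0)\psi_{\eta\eta}(0,0)-1|^{1/2}$, and then prove the required reduction to $|\partial_\xi(\kappa^*p)/\partial_\xi p|^{1/2}$ by linearizing the generating-function identity to read off the entries $X_y$, $X_\eta$ of $d\kappa(0,0)$. The two calculations are consistent: combining your identity with the chain-rule formula for $g'(0)$ shows $|g'(0)|^{1/2}/|\psi_{x\eta}(0,0)|^{1/2}=|\det H(0)|^{-1/2}$, so both factorizations of the amplitude agree. Your approach is more self-contained (no black-box citation) and makes the source of the index $\upsilon$ transparent; the paper's is shorter once [NZ] is accepted. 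One small point worth flagging: since $\tilde{\sigma}$ is only in $C_b^\infty$, not compactly supported, the $dy$-integral should be handled either as an oscillatory integral or by inserting a cutoff near $y=0$ (harmless, as one is working microlocally near $(0,0)$ and $\alpha$ is compactly supported); this is implicit in \cite[Lemma 4.1]{NZ} and worth making explicit if the argument is to be fully self-contained.
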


\begin{proof}
By the assumption, we can find neighborhoods $\Omega,\tilde{\Omega}$ of $0\in \Rr$ such that $\kappa(\tilde{\Lambda})=\Lambda$, where $\Lambda=\{(x,\phi'(x))\mid x\in \Omega\}$ and  $\tilde{\Lambda}=\{(x,\tilde{\phi}'(x))\mid x\in \tilde{\Omega}\}$. By virtue of \cite[Lemma 4.1]{NZ}, we can find $\sigma=\sigma_0+\BigOO{h}{C_b^{\infty}}$ satisfying \eqref{eq:Fkappaact} and
\begin{align}\label{eq:NZcal}
\sigma_0(x)=e^{\frac{\pi i}{4}\upsilon}\frac{\alpha_0(x,\phi'(g(x)))}{\left|\partial_x\partial_\eta\psi(x,\phi'(g(x)))\right|^{1/2}}\left|g'(x)\right|^{1/2}\tilde{\sigma}_0(g(x)),
\end{align}
where, $g(x)$ denotes the function defined near $x=0$ such that $\kappa(g(x),\tilde{\phi}'(g(x)))=(x,\phi'(x))$. The precise value of $\upsilon$ is not calculated in \cite[Lemma 4.1]{NZ}. We do not include this calculation here, as it can be easily done using 
a stationary phase argument.
Next, we claim that
\begin{align}\label{eq:gderp}
g'(0)=\frac{\partial_\xi(\kappa^*p)(0,0)}{\partial_\xi p(0,0)}
\end{align}
holds. Let $q=q(x,\xi)$ be a function defined near $(0,0)$ such that $\kappa^*q(x,\xi)=x$ and $q(0,0)=0$. Then, one has\footnote{By the definition of $g$, we have $q(x,\tilde{\phi}'(x))=q(\kappa(g(x),\ast))=(\kappa^*q)(g(x),\ast)=g(x)$.} $g(x)=q(x,\tilde{\phi}'(x))$, and
\begin{equation*}
g'(x)=\left(\partial_x q+\tilde{\phi}''(x)\partial_\xi q\right)(x,\phi'(x))
=\frac{\{p,q\}}{\partial_\xi p}(x,\tilde{\phi}'(x)),
\end{equation*}
where we use $(\pa_xp+\tilde{\phi}''(x)\pa_{\xi}p)(x,\tilde{\phi}'(x))=0$, which follows from $p(x,\tilde{\phi}'(x))=0$.
Then $\tilde{\phi}'(0)=0$ and $\{p,q\}=\{\kappa^*p,\kappa^*q\}=\partial_\xi(\kappa^*p)$ imply \eqref{eq:gderp}.

Now, we observe that $g(0)=0$ holds by $\kappa(0,0)=(0,0)$.
Since we assumed $F$ is microlocally unitary at $((0,0),(0,0))$, we have $\alpha_0(0,0)=\left|\partial_x\partial_\eta\psi(0,0))\right|^{1/2}$ (see Definition \ref{def:FIO}). Combining these with \eqref{eq:NZcal} and \eqref{eq:gderp}, we obtain \eqref{eq:WKBFIO}.

\end{proof}

\subsection{Metaplectic operators associated with rotations}\label{S:rotation}
Fourier integral operators associated with linear symplectic maps can be defined as unitary operators on $L^2$ and are called \textit{metaplectic operators}.
We recall some properties of a metaplectic operator $M_\theta:L^2(\Rr)\to L^2(\Rr)$ associated with a one-dimensional ($n=1$) linear symplectic map $\kappa_\theta:\Rr^2\to\Rr^2$ 
corresponding to a rotation:
\begin{equation}\label{eq:rotation}
    \kappa_\theta(x,\xi)=((\cos\theta)x+(\sin\theta)\xi,-(\sin\theta)x+(\cos\theta)\xi).
\end{equation}
The operator $M_\theta$ is uniquely characterized up to a multiplication by a constant of modulus one by its unitarity and by the exact form of the Egorov's theorem:
\begin{equation}\label{eq:ExEgorov}
    M_\theta^{-1}a^w(x,hD_x)M_\theta=(\kappa_\theta^*a)^w(x,hD_x)\quad
    \text{for any } a\in C_b^\infty(\Rr^2).
\end{equation}
In this manuscript, we fix it as an integral operator
\begin{equation*}
    M_\theta u(x)=
    \frac{\left|\cos\theta\right|^{-1/2}}{2\pi h}\int_{\Rr^2}\exp\left(\frac ih\left(-\frac{\tan\theta}2\left(x^2-2\frac{x\eta}{\sin\theta}+\eta^2\right)-y\eta\right)\right)u(y)dyd\eta,
\end{equation*}
for $|\theta|<\pi/2$, and $M_{\pi/2}=e^{i\pi/4}\mc{F}_h$, where 
$\mc{F}_h$ is the semiclassical Fourier transform defined by
\begin{equation}\label{Eq:Fourier}
    \mc{F}_hu(x)=\frac1{\sqrt{2\pi h}}\int_{\Rr}e^{-\frac ih xy}u(y)dy.
\end{equation}

\begin{lemma}\label{lem:inv-metaplectic-normal}
For $-\pi/2 < \theta\le \pi/2$, one has
    \begin{equation*}
    M_\theta e^{-\frac{\cdot^2}{2h}}(x)=
    e^{i\frac{\theta}{2}}e^{-\frac{x^2}{2h}}.
    \end{equation*}
\end{lemma}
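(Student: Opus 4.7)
\medskip
\noindent\textbf{Proof plan.} I would split into the two cases $\theta=\pi/2$ and $|\theta|<\pi/2$ corresponding to the two definitions of $M_\theta$.

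The case $\theta=\pi/2$ is immediate: by definition $M_{\pi/2}=e^{i\pi/4}\mc{F}_h$, and a classical completion of the square in the semiclassical Fourier integral yields
\begin{equation*}
    \mc{F}_h\bigl(e^{-\cdot^2/(2h)}\bigr)(x)=\frac{1}{\sqrt{2\pi h}}\int_{\Rr}e^{-\frac{1}{2h}((y+ix)^2+x^2)}\,dy=e^{-x^2/(2h)}.
\end{equation*}
Multiplying by $e^{i\pi/4}=e^{i(\pi/2)/2}$ gives the claim at $\theta=\pi/2$.

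For $|\theta|<\pi/2$, the plan is a direct evaluation of the defining double integral by iterated Gaussian integration. Plugging $u(y)=e^{-y^2/(2h)}$ into the formula for $M_\theta$, I would first integrate in $y$: the $y$-dependent part of the phase is $-iy\eta/h$, so combining with $e^{-y^2/(2h)}$ and completing the square in $y$ yields $\sqrt{2\pi h}\,e^{-\eta^2/(2h)}$ by a vertical contour shift. After this, the $\eta$-exponent reads
\begin{equation*}
    -\frac{1+i\tan\theta}{2h}\,\eta^2+\frac{ix}{h\cos\theta}\,\eta-\frac{i\tan\theta}{2h}\,x^2,
\end{equation*}
where the coefficient of $\eta^2$ rewrites cleanly as $e^{i\theta}/(2h\cos\theta)$ using $1+i\tan\theta=e^{i\theta}/\cos\theta$. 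Completing the square in $\eta$ and applying the complex Gaussian identity $\int_{\Rr}e^{-a\eta^2}\,d\eta=\sqrt{\pi/a}$ (principal branch), which is valid since $\operatorname{Re}(e^{i\theta}/\cos\theta)=1/\cos\theta>0$, produces a factor of the form $\sqrt{2\pi h\cos\theta}\cdot e^{i\theta/2}$ times a Gaussian in $x$. The $x^2$-terms then telescope: the sum of $-i\tan\theta/(2h)$ from the phase and the real part of the completed square contributes exactly $-1/(2h)$, using the identity $e^{-i\theta}/\cos\theta=1-i\tan\theta$. The remaining prefactor $|\cos\theta|^{-1/2}\sqrt{2\pi h}\sqrt{2\pi h\cos\theta}/(2\pi h)=1$ (for $|\theta|<\pi/2$) collapses everything to $e^{i\theta/2}e^{-x^2/(2h)}$.

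The main delicacy is the choice of branch for the Gaussian integral with complex coefficient: one must use the principal branch consistently to get $\sqrt{e^{-i\theta}}=e^{\mp i\theta/2}$, and the sign is pinned down by continuity in $\theta$ together with the trivial check $M_0=\operatorname{Id}$ at $\theta=0$ and the Fourier-case check at $\theta=\pi/2$. A conceptual alternative (which I would mention briefly) is to recognize $\{M_\theta\}$ as the one-parameter group generated by the semiclassical harmonic oscillator $\hat H=\tfrac12((hD_x)^2+x^2)$: since $\hat H e^{-x^2/(2h)}=\tfrac{h}{2}e^{-x^2/(2h)}$, one immediately gets $M_\theta e^{-x^2/(2h)}=e^{\pm i\theta/2}e^{-x^2/(2h)}$, with the sign fixed by $\theta=\pi/2$.
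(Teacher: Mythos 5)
Your route for $|\theta|<\pi/2$ is the same one the paper takes: integrate in $y$ to reduce to $\mc{F}_h e^{-\cdot^2/(2h)}=e^{-\cdot^2/(2h)}$, then complete the square in $\eta$ and evaluate the remaining complex Gaussian (the paper phrases this as a change of variables plus Cauchy's theorem, which is the same calculation). Handling $\theta=\pi/2$ separately via $M_{\pi/2}=e^{i\pi/4}\mc{F}_h$ is a correct and necessary addition, since the displayed kernel for $M_\theta$ only covers $|\theta|<\pi/2$.

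The gap is in the branch of the Gaussian. After completing the square, the $\eta^2$ coefficient is $a=e^{i\theta}/(2h\cos\theta)$, with $\operatorname{Re}a=1/(2h)>0$ (a small slip: you wrote $\operatorname{Re}(e^{i\theta}/\cos\theta)=1/\cos\theta$, but $e^{i\theta}/\cos\theta=1+i\tan\theta$ has real part $1$; harmless) and $\arg a=\theta\in(-\pi/2,\pi/2)$. The principal branch therefore gives
\begin{equation*}
\int_{\Rr}e^{-a\eta^2}\,d\eta=\sqrt{\tfrac{\pi}{a}}=\sqrt{2\pi h\cos\theta}\,e^{-i\theta/2},
\end{equation*}
not $\sqrt{2\pi h\cos\theta}\,e^{+i\theta/2}$ as you claim. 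There is no residual sign to ``pin down'': the branch is forced by convergence, and carrying the calculation through honestly returns $M_\theta e^{-\cdot^2/(2h)}(x)=e^{-i\theta/2}e^{-x^2/(2h)}$ for $|\theta|<\pi/2$. Your safeguards cannot repair this: the $\theta=0$ check is vacuous ($e^{\pm i\cdot0/2}=1$ either way), and the $\theta=\pi/2$ value $e^{i\pi/4}$ disagrees with the one-sided limit $\lim_{\theta\to\pi/2^-}e^{-i\theta/2}=e^{-i\pi/4}$, so ``continuity'' is exactly what fails here rather than what resolves the sign. (This discrepancy between the $|\theta|<\pi/2$ kernel and the convention $M_{\pi/2}=e^{i\pi/4}\mc{F}_h$ is something you should have surfaced rather than smoothed over.) The harmonic-oscillator ``conceptual alternative'' is likewise inconclusive on its own: the sign in $M_\theta e^{-x^2/(2h)}=e^{\pm i\theta/2}e^{-x^2/(2h)}$ is precisely the sign of the generator in $M_\theta=e^{\pm i\theta\hat H/h}$, and identifying that sign from the explicit $M_\theta$ is the whole content of the computation, so this does not escape the question.
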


\begin{proof}
First, we recall that one has $\mc{F}_he^{-\frac{\cdot^2}{2h}}(\eta)=e^{-\frac{\eta^2}{2h}}$.
This implies 
\begin{align*}
    M_\theta e^{-\frac{\cdot^2}{2h}}(x)
    &=\frac{|\cos\theta|^{-1/2}}{\sqrt{2\pi h}}\int_{\Rr}\exp\left(\frac {-e^{i\theta}\eta^2+i2x\eta-i(\sin\theta) x^2}{2h\cos\theta} \right)d\eta\\
    &=e^{-\frac{x^2}{2h}}\frac{|\cos\theta|^{-1/2}}{\sqrt{2\pi h}}
    \int_{\Rr}\exp\left(\frac {-e^{i\theta}(\eta-ie^{-i\theta}x)^2}{2h\cos\theta} \right)d\eta.
\end{align*}
Then by the change of variable $\xi=e^{i\theta/2}|\cos\theta|^{-1/2}(\eta-ie^{-i\theta}x)$, and by the Cauchy theorem, we obtain the required formula.
\end{proof}

\subsection{Translation only in $\xi$-variable}
As another special case of symplectic maps, consider
\begin{equation*}
    \kappa_\phi(x,\xi)=(x,\xi+\partial_x \phi(x))
\end{equation*}
for a real-valued function $\phi\in C^\infty_c(\Rr^n;\Rr)$.
In this case, the multiplication by $e^{i\phi/h}$ is a Fourier integral operator which quantizes $\kappa_\phi$.
\begin{prop}\label{Prop:conjugatePDO}
    Let $A=a^w(x,hD_x)$ be a pseudodifferential operator with $a\in \mathscr{S}(T^*\Rr^n)$. 
    One has 
    \begin{equation*}
        e^{-i\phi/h}Ae^{i\phi/h}=
        a_\phi^w(x,hD_x)+\BigOO{h^\infty}{\mathscr{S}'\to\mathscr{S}},
    \end{equation*}
    with $a_\phi=\kappa_\phi^*a+\BigOO{h}{C_b^\infty}$.
    Moreover, one has
    \begin{equation*}
        \operatorname{supp}a_\phi\subset\kappa_\phi^{-1}(\operatorname{supp}\,a).
    \end{equation*}
\end{prop}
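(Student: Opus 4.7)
The plan is to write out the integral kernel of $e^{-i\phi/h}Ae^{i\phi/h}$ explicitly, recognize it as an amplitude-type quantization after a change of variable in $\xi$, and then reduce this amplitude to a Weyl symbol by the standard asymptotic argument.

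First, I would compute the kernel. Since $e^{i\phi/h}$ is multiplication by a function of modulus $1$, the operator $e^{-i\phi/h}Ae^{i\phi/h}$ has integral kernel
\begin{equation*}
    K(x,y)=\frac{1}{(2\pi h)^n}\int_{\Rr^n}e^{\frac{i}{h}(x-y)\cdot\xi+\frac{i}{h}(\phi(y)-\phi(x))}\,a\!\left(\frac{x+y}{2},\xi\right)d\xi.
\end{equation*}
Writing $\phi(y)-\phi(x)=-(x-y)\cdot\Phi(x,y)$ with the smooth function $\Phi(x,y):=\int_0^1\partial_x\phi(x+t(y-x))\,dt$ and changing variables $\xi=\eta+\Phi(x,y)$, I obtain
\begin{equation*}
    K(x,y)=\frac{1}{(2\pi h)^n}\int_{\Rr^n}e^{\frac{i}{h}(x-y)\cdot\eta}\,b(x,y,\eta)\,d\eta,\qquad b(x,y,\eta):=a\!\left(\frac{x+y}{2},\eta+\Phi(x,y)\right).
\end{equation*}
Note $\Phi(x,x)=\partial_x\phi(x)$, so $b(x,x,\eta)=(\kappa_\phi^*a)(x,\eta)$ is precisely the leading symbol we want.

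Next, I would apply the standard reduction from amplitude to Weyl symbol (see, e.g., \cite[Theorem 4.17]{Zwo}). Since $a\in\mathscr{S}$ and $\Phi$ is smooth with bounded derivatives, $b\in C_b^\infty$ in $(m,z,\eta)$ where $m=(x+y)/2$, $z=x-y$, and in fact decays as Schwartz in $\eta$ (up to a shift). Writing $z^\alpha e^{\frac{i}{h}z\cdot\eta}=(h/i)^{|\alpha|}\partial_\eta^\alpha e^{\frac{i}{h}z\cdot\eta}$ and integrating by parts yields an asymptotic expansion
\begin{equation*}
    a_\phi(m,\eta)\sim\sum_{\alpha}\frac{1}{\alpha!}\left(\frac{h}{2i}\right)^{|\alpha|}(\partial_z^\alpha\partial_\eta^\alpha\tilde b)(m,0,\eta),
\end{equation*}
with $\tilde b(m,z,\eta)=b(m+z/2,m-z/2,\eta)$, such that $K$ coincides (up to $\BigO{h^\infty}$ in $\mathscr{S}'\to\mathscr{S}$) with the Weyl kernel of $a_\phi$. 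The $\alpha=0$ term gives $a_\phi(m,\eta)=\kappa_\phi^*a(m,\eta)+\BigOO{h}{C_b^\infty}$, as required.

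For the support statement, observe that every term of the asymptotic expansion of $a_\phi$ is a polynomial in derivatives of $a$ and of $\phi$ evaluated with the $\eta$-slot equal to $\eta+\partial_x\phi(m)$. Hence $a_\phi(m,\eta)$ (and, by the Borel construction, also its asymptotic sum) vanishes whenever $(m,\eta+\partial_x\phi(m))\notin\operatorname{supp}a$, which is exactly the condition $(m,\eta)\notin\kappa_\phi^{-1}(\operatorname{supp}a)$. The main technical step is the amplitude-to-Weyl reduction together with verifying that the remainders are $\BigOO{h^\infty}{\mathscr{S}'\to\mathscr{S}}$; but since $a\in\mathscr{S}$ and $\phi\in C_c^\infty$, all relevant seminorms are controlled and this follows from the standard stationary-phase (non-stationary-phase) machinery.
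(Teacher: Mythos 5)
Your proof is correct and complete, and since the paper itself provides no proof of this proposition (it only remarks just above the statement that multiplication by $e^{i\phi/h}$ is a Fourier integral operator quantizing $\kappa_\phi$, hinting at an Egorov-type argument), your direct kernel computation is the natural and arguably cleaner route. Writing $\phi(y)-\phi(x)=-(x-y)\cdot\Phi(x,y)$, changing variables $\xi=\eta+\Phi(x,y)$, and then performing the standard amplitude-to-Weyl reduction is exactly what is needed, and unlike the abstract Egorov route it \emph{also} gives the support statement for free, since every term $\partial_z^\alpha\partial_\eta^\alpha\tilde b(m,0,\eta)$ is a derivative of $a$ evaluated at $(m,\eta+\partial_x\phi(m))=\kappa_\phi(m,\eta)$ and hence vanishes when $\kappa_\phi(m,\eta)\notin\operatorname{supp}a$. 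The boundedness of all derivatives of $\Phi$ coming from $\phi\in C_c^\infty$ and $a\in\mathscr{S}$ is also the right observation for the $C^\infty_b$ estimate and the $\mathscr{S}'\to\mathscr{S}$ remainder bound.

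One small computational slip: with your definition $\tilde b(m,z,\eta)=b(m+z/2,m-z/2,\eta)$ the amplitude-to-Weyl expansion reads
\begin{equation*}
a_\phi(m,\eta)\sim\sum_\alpha\frac{(ih)^{|\alpha|}}{\alpha!}\bigl(\partial_z^\alpha\partial_\eta^\alpha\tilde b\bigr)(m,0,\eta),
\end{equation*}
not with the constant $(h/2i)^{|\alpha|}$: the familiar factor of $\tfrac12$ is already absorbed into the $\partial_z$-derivatives of $\tilde b$ via the chain rule, so keeping it in the coefficient double-counts. This has no effect on the leading term, the $\BigOO{h}{C_b^\infty}$ estimate, or the support argument, so the conclusion stands.
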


According to the above proposition, 
a WKB state remains a WKB state under the action of a pseudodifferential operator
that is, for a polynomially bounded $\sigma\in C_b^\infty(\Rr^n)$ and a symbol $a\in \mathscr{S}(T^*\Rr^n)$, one has
\begin{equation}\label{eq:PseuDO-WKB}
    a^w(x,hD_x)\left(\sigma(x)e^{i\phi (x)/h}\right) 
    = 
    \left(a_\phi^w(x,hD_x)\sigma(x)\right)e^{i\phi(x)/h}+\BigOO{h^\infty}{\mathscr{S}}.
\end{equation}

\subsection{Microlocal solutions for the scalar equations away from the crossing point} 

In this subsection, we review some basic properties of the scalar equations
\begin{align}\label{eq:Scalareq}
P_jv_j\equiv 0
\end{align}
where $P_j=p_j^w(x,hD_x)$ and $p_j$ satisfies Assumption \ref{C:RealPrincipal}.

The first and second statements of the following proposition is a version of our main theorem for the scalar equation. The third statement 
tells 
how we choose a constant to get a normalized solution in the sense of Definition \ref{def:normalized} under the condition $\partial_{\xi}p_j(0,0)\neq 0$. The fourth one 
ensures that the normalization condition is invariant under the action of metaplectic transforms with positive small angles, which will be useful in deducing the connection formula when $\partial_{\xi}p_j(0,0)=0$. 

\begin{prop}\label{prop:scalarmicsol}
Fix $j=1,2$. 

\noindent$(i)$ The space of microlocal solutions to the equation \eqref{eq:Scalareq} on each connected small neighborhood of $(0,0)$ is one-dimensional.\footnote{
More precisely, there exists a microlocal solution $v_j$ to \eqref{eq:Scalareq} such that $v_j\not\equiv0$ near $(0,0)$, and 
for any microlocal solution $\til{v}_j$, there exists a constant $k$ such that $\til{v}_j\equiv kv_j$ m.l. near $(0,0)$.}

\noindent$(ii)$ If $\pa_{\xi}p_j(0,0)\neq 0$, then for each $c_j\in \Rr$ independent of $h$, the unique microlocal solution $v_j$ (m.l. near $(0,0)$) to \eqref{eq:Scalareq} with $v_j(0)=c_j+\BigO{h^{\infty}}$ is of the form
\begin{align}\label{eq:scalarunisol}
v_j(x)=e^{\frac ih\phi_j(x)}\sigma_j(x,h),\quad \sigma_j(0,h)=c_j,
\end{align}
where $\phi_j$ is the unique solution to the Eikonal equation
\begin{equation}\label{eq:p_jeikonal}
    p_j(x,\phi'_j(x))=0,\qquad\phi_j(0)=0
\end{equation}
and $\sigma_j(x,h)\in C_b^{\infty}(\Rr)$ can be constructed by solving the transport equations and admits the asymptotic expansion $\sigma_j(x,h)\sim\sum_{l\ge0}h^l\sigma_{j,l}(x)$.

\noindent$(iii)$ The microlocal solution $v_j$ given in \eqref{eq:scalarunisol} is normalized in the sense of Definition \ref{def:normalized} $(i)$ if and only if
\begin{align}\label{eq:scalarnorconst}
c_j=\sqrt{\frac{\left|\partial_{(x,\xi)} p_j(0,0)\right|}{\left|\partial_\xi p_j(0,0)\right|}}.
\end{align}

\noindent$(iv)$ The metaplectic operator $M_\theta$ maps a normalized microlocal solution $v_{j,\theta}$ near $(0,0)$ of $M_\theta^{-1}P_jM_{\theta}v=0$ to that of \eqref{eq:Scalareq}, for small $\theta>0$.
\end{prop}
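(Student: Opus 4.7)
The plan is to handle parts $(i)$--$(iv)$ in sequence, with most analytical content in $(iii)$ and the trickiest sign bookkeeping in $(iv)$.

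For $(i)$, I would reduce $p_j$ to a normal form via Darboux's theorem (using only the real principal type assumption): there exists a symplectomorphism $\kappa$ near $(0,0)$ with $\kappa^*p_j = \xi$, and Proposition \ref{Prop:FIO-conjugate} quantizes it as a microlocally unitary Fourier integral operator $F$. After absorbing the subprincipal $O(h)$ correction in $F^{-1}P_jF$ by a scalar elliptic conjugation (Proposition \ref{prop:elliptic}), the equation reduces to $hD_x w \equiv 0$ microlocally near $(0,0)$. Its microlocal solutions are locally constant modulo $O(h^\infty)$---a one-dimensional space---and transporting back through $F$ yields the same conclusion for $P_j$.

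For $(ii)$, since $\partial_\xi p_j(0,0)\neq0$, the implicit function theorem gives a unique smooth solution $\phi_j$ to the eikonal equation with $\phi_j(0)=0$ (automatically $\phi_j'(0)=0$, since $p_j(0,0)=0$). Writing $v_j = e^{i\phi_j/h}\sigma_j$ and applying \eqref{eq:PseuDO-WKB} turns $P_j v_j\equiv0$ into a pseudodifferential transport equation on $\sigma_j$. Expanding $\sigma_j\sim\sum_l h^l\sigma_{j,l}$ and matching powers of $h$, the leading coefficient $\sigma_{j,0}$ solves a first-order linear ODE along $H_{p_j}$ with initial value $c_j$, and each higher $\sigma_{j,l}$ solves an inhomogeneous version with zero initial data; Borel-summing yields a smooth $\sigma_j$.

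For $(iii)$, I would evaluate the FBI-type integral in \eqref{eq:FBI-normal} with $v_j = e^{i\phi_j/h}\sigma_j$ by the complex stationary phase method. The complex phase $\Phi(x) := i\phi_j(x)-x^2/2$ has its unique critical point at $x=0$ (using $\phi_j'(0)=0$), and the Hessian $-\Phi''(0) = 1-i\phi_j''(0)$ is computed by differentiating the eikonal equation at $0$: $\phi_j''(0) = -\partial_x p_j(0,0)/\partial_\xi p_j(0,0)$. A short trigonometric manipulation then identifies $-\Phi''(0) = e^{-i\Theta(p_j)}/\cos\Theta(p_j)$, so the leading stationary-phase term becomes $\sqrt{\cos\Theta(p_j)}\,e^{i\Theta(p_j)/2}\sigma_{j,0}(0)$. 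Combined with the prefactor $e^{-i\Theta(p_j)/2}/\sqrt{2\pi h}$, the normalization condition reduces to $c_j\sqrt{\cos\Theta(p_j)} = 1$; the identity $\cos\Theta(p_j) = |\partial_\xi p_j(0,0)|/|\partial_{(x,\xi)}p_j(0,0)|$ then yields \eqref{eq:scalarnorconst}.

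For $(iv)$, the exact Egorov identity \eqref{eq:ExEgorov} directly shows that $M_\theta v_{j,\theta}$ solves $P_j w\equiv0$ microlocally. For the normalization, I would observe that $M_\theta$ is bilinearly symmetric (a direct kernel computation gives $\int f\cdot M_\theta g\,dx=\int M_\theta f\cdot g\,dx$), so Lemma \ref{lem:inv-metaplectic-normal} transfers the Gaussian weight through $M_\theta$ at the cost of a factor $e^{\pm i\theta/2}$. Combined with the elementary identity $\Theta(\kappa_\theta^*p_j)=\Theta(p_j)+\theta$ (from the chain rule at $(0,0)$ and an angle-sum formula), the phases cancel and the normalization of $v_{j,\theta}$ with respect to $\kappa_\theta^*p_j$ transfers exactly to that of $M_\theta v_{j,\theta}$ with respect to $p_j$. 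The main obstacle is aligning the sign conventions among $\Theta$, $\kappa_\theta$, and $M_\theta$ so that all three cancellations occur simultaneously; once this is set up the statement follows by a direct substitution.
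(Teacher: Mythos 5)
Your proposal is correct and follows essentially the same route as the paper: parts (i)--(ii) are the standard WKB facts the paper cites without proof, part (iii) is precisely the paper's change-of-variable computation $x^2-2i\phi_j(x)=y^2$ rephrased as complex stationary phase with the same Hessian $1-i\phi_j''(0)$ and the same trigonometric identities, and part (iv) is the same argument of transporting the Gaussian weight through $M_\theta$ (via Lemma \ref{lem:inv-metaplectic-normal}) combined with $\Theta(\kappa_\theta^*p_j)=\Theta(p_j)+\theta$ for small $\theta>0$. Your observation that the correct symmetry to move $M_\theta$ onto the Gaussian is the bilinear transpose $M_\theta^T=M_\theta$ (since the Mehler kernel is symmetric in $x,y$) is actually a cleaner and more careful bookkeeping than the printed proof, and your honest flagging of the remaining sign alignment is exactly where the delicacy lies.
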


\begin{rmk}
    The function
    \begin{equation*}
        -\arctan\left(\frac{\partial_x p_j(\exp(tH_{p_j})(0,0))}{\partial_\xi p_j(\exp(tH_{p_j})(0,0))}\right)
    \end{equation*}
    has a discontinuity at $t=0$ if $t=0$ is a simple zero of $(\partial_\xi p_j)(\exp(tH_{p_j})(0,0))=0$.
    Then, the normalization constants $c_{j,\pm\varepsilon}$ for the WKB solution $v_j$ at $\exp(\pm\varepsilon H_{p_j})(0,0)$ 
    differ by
    $\pi/2$ in the limit $\varepsilon\to+0$: 
    \begin{equation*}
        \lim_{\varepsilon\to+0}\left|c_{j,+\varepsilon}-c_{j,-\varepsilon}\right|=\frac\pi2.
    \end{equation*}
    This gives an elementary explanation how the  Maslov index appears.
\end{rmk}

\begin{rmk}\label{Rem:index-theta}
    In general, the normalization is not invariant by the action of $M_\theta$.
    If $v_{j,\theta}$ is a normalized microlocal solution to $M_\theta^{-1}P_jM_\theta v_{j,\theta}=0$ with $-\pi/2<\theta<\pi/2$, 
    then the function
    \begin{equation*}
        v_j:=e^{-\frac {i\pi}2 \nu}M_{\theta}v_{j,\theta}
    \end{equation*}
    is a normalized microlocal solution of $P_jv_j=0$, where $\nu$ is given by \eqref{eq:index-nu-rotation}.
\end{rmk}

The proof of $(i)$ and $(ii)$ is well-known and we omit to write it here (for the uniqueness statement, see \cite[Lemme 18]{CdVPa}). In the following, we prove the parts $(iii)$ and $(iv)$.

\begin{proof}[Proof of Proposition \ref{prop:scalarmicsol} $(iii)$]
Let $v_j(x)=e^{\frac ih\phi_j(x)}\sigma_{j}(x)$ be the microlocal solution of the form \eqref{eq:scalarunisol}.
For any $h$-independent $\delta>0$, the integral 
outside a
$\delta$-neighborhood of $x=0$ is exponentially small with respect to $h\to+0$:
\begin{equation}\label{eq:Normal-away}
    \left|\int_{|x|\ge\delta}e^{-\frac{x^2}{2h}}e^{\frac ih\phi_j(x)}\sigma_{j}(x)dx\right|\le
    Ce^{-\delta^2/2h}.
\end{equation}

On the other hand, if $\delta>0$ is sufficiently small, the change of the variable $x^2-2i\phi_j(x)=y^2$ is valid for $x\in[-\delta,+\delta]$. 
Note that one has $\phi_j(x)=\phi_j''(0)x^2/2+\BigO{x^3}$ as $x\to0$, and $y= (1-i\phi_j''(0))^{1/2}x(1+\BigO{x})$.
Then it follows from the expansion  $\sigma_{j}(x)=c_j+\BigO{x}+\BigO{h}$ near $x=0$ that
\begin{align}\label{eq:Normal-near}
    &\frac1{\sqrt{2\pi h}}\int_{|x|\le \delta}e^{-\frac{x^2}{2h}}e^{\frac ih\phi_j(x)}\sigma_j(x)dx=\frac1{\sqrt{2\pi h}}\int_{|x|\le \delta}e^{-\frac{x^2}{2h}}e^{\frac ih\phi_j(x)}(c_j+\BigO{x}+\BigO{h})dx
    \\&=\frac{(1-i\phi_j''(0))^{-1/2}}{\sqrt{2\pi h}}\int_{\gamma_\delta}e^{-\frac{y^2}{2h}}(c_j+\BigO{y}+\BigO{h})
    =\frac{c_j+\BigO{h}}{(1-i\phi_j''(0))^{1/2}}\nonumber\\
    &=\left|\frac{\partial_\xi p_j(0,0)}{\partial_{(x,\xi)} p_j(0,0)}\right|^{\frac{1}{2}}
    c_je^{i\Theta(p_j)/2}+\BigO{h}\nonumber,
\end{align}
with a contour $\gamma_\delta$ in the complex plane, where we have applied the Cauchy theorem and the identities
\begin{align*}
    &\left|1-i\phi_j''(0)\right|^2=1+\left|\phi_j''(0)\right|^2=\left|\frac{\partial_{(x,\xi)} p_j(0,0)}{\partial_\xi p_j(0,0)}\right|^2,
    \\&\operatorname{arg}\left((1-i\phi_j''(0))^{-1/2}\right)=\frac12 \arctan \phi_j''(0)
    =\frac12 \arctan\left(\frac{\partial_x p_j(0,0)}{\partial_\xi p_j(0,0)}\right)
    =-\frac{\Theta(p_j)}{2},
\end{align*}
deduced from \eqref{eq:scalarunisol}. Combining \eqref{eq:Normal-near} with \eqref{eq:Normal-away}, we obtain the claim in $(iii)$.
\end{proof}

\begin{proof}[Proof of Proposition \ref{prop:scalarmicsol} $(iv)$]

Let $v_{j,\theta}$ satisfy the normalization condition \eqref{eq:FBI-normal} and $M_\theta^{-1}P_jM_\theta v_{j,\theta}\equiv0$ microlocally near the origin.
Then Lemma~\ref{lem:inv-metaplectic-normal} together with the unitarity of $M_\theta$ and $M_\theta^*=M_{-\theta}$ shows for $-\pi/2<\theta<\pi/2$ the identity
\begin{align*}
    \frac{e^{-\frac i2\Theta(p_j)}}{\sqrt{2\pi h}}\int_{\Rr}e^{-\frac{x^2}{2h}}M_\theta v_{j,\theta}(x)dx
    =\frac{e^{-\frac i2\Theta(p_j)}}{\sqrt{2\pi h}}\int_{\Rr}M_{-\theta}e^{-\frac{\cdot^2}{2h}}(x)v_{j,\theta}(x)dx
    =e^{\frac i2(\Theta(p_{j,\theta})-\Theta(p_j)-\theta)}.
\end{align*}
By definition of $\Theta$, one has the identity (see FIGURE \ref{Fig:rotation}) 
\begin{equation}\label{eq:ind-theta}
    \Theta(p_{j,\theta})=\Theta(p_j)+\theta+\pi\nu,
\end{equation}
with the index $\nu=\nu(p_j,\theta)\in\{0,\pm1\}$ determined by
    \begin{equation}\label{eq:index-nu-rotation}
        \Theta(p_j)+\theta+\pi\nu\in\left[-\frac\pi2,\frac \pi 2\right).
    \end{equation}
The proposition follows from the left-continuity with respect to $\theta$ of $\Theta(p_{j,\theta})$.
\end{proof}

\begin{figure}[h]
    \begin{minipage}{\linewidth}
    {\centering
    \includegraphics[width=\linewidth,page=2]{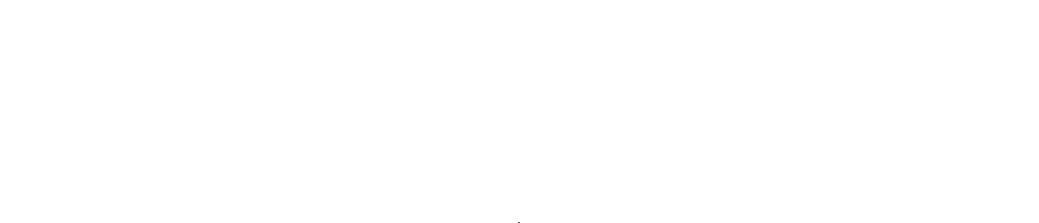}
    \caption{Rotation of the symbol $p_j$ by $\kappa_\theta$ for various $\theta$ and the relation between $\Theta(p_j)$, $\Theta(p_{j,\theta})$, and $\theta$ for each case (see \eqref{eq:index-nu-rotation}).}\label{Fig:rotation}
    }
    \end{minipage}
\end{figure}

\subsection{Normalized microlocal solutions the original equation}\label{SubsubS:WKBOriginal}

In this subsection, we discuss the behavior of microlocal solutions to the original equation
\begin{align}\label{eq:Mateq}
\mathscr{P}u\equiv 0
\end{align}
\textbf{away from the crossing point $(0,0)\in T^*\Rr$}. 
The following proposition is an analog of Proposition \ref{prop:scalarmicsol} for the scalar equation.

\begin{prop}\label{Prop:matrixmicsol}
$(i)$ For each $j=1,2$ and $\dir\in \{\inc,\out\}$, the space of microlocal solutions to the equation \eqref{eq:Mateq} on $\gamma_j^{\dir}$ is one-dimensional, where $\gamma_j^{\dir}$ is introduced in Condition \ref{C:ContactOrder}.
\noindent$(ii)$ Suppose that
\begin{equation}\label{eq:NoCaustic}
\partial_\xi p_1(0,0)\neq0\quad\text{and}\quad \partial_\xi p_2(0,0)\neq0
\end{equation}
hold. Each microlocal solution $w_j^{\dir}$ to \eqref{eq:Mateq} m.l. on $\gamma_j^{\dir}$ is given by
\begin{equation}\label{eq:matrixunisol}
w_j^\dir=e^{i\phi_j/h}\begin{pmatrix}
\sigma_{j,1}\\\sigma_{j,2}
\end{pmatrix},\quad \mathrm{WF}_h(w_j^{\dir})\subset \gamma_j^{\dir},
\end{equation}
where $\phi_j$ is the same as \eqref{eq:p_jeikonal} and $\sigma_{j,k}(x,h)\in C_b^{\infty}(\Rr\setminus \{0\})$ can be constructed by solving the transport equations and admits the asymptotic expansion $\sigma_{j,k}(x,h)\sim\sum_{l\ge0}h^l\sigma_{j,k,l}(x)$ with $\sigma_{j,3-j,0}=0$. Moreover, the principal part $\sigma_{j,j,0}$ is smooth at $x=0$ (see \cite[Remark 5.2]{AF}).
\noindent$(iii)$ The microlocal solution $w_j$ given in \eqref{eq:matrixunisol} is normalized in the sense of Definition \ref{def:normalized} $(ii)$ if and only if we choose $\sigma_{j,j,0}(0)$ as \eqref{eq:scalarnorconst}.
\noindent$(iv)$ The metaplectic operator $M_\theta$ maps a normalized microlocal solution $w_{j,\theta}$ near $(0,0)$ of $M_{\theta}^{-1}\mathscr{P}M_{\theta}u=0$ to that of \eqref{eq:Mateq}, for small $\theta>0$.
\end{prop}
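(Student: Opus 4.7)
The plan is to reduce each statement to its scalar counterpart already established in Proposition \ref{prop:scalarmicsol}.

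For $(i)$, I would work microlocally on $\gamma_j^{\dir}$, where by choice of $\Omega$ exactly one of $p_1, p_2$ vanishes. Say we are on $\gamma_1^{\dir}$, so $p_2$ is elliptic microlocally there. By Proposition \ref{prop:elliptic}, $P_2$ admits a microlocal inverse on a neighborhood of $\gamma_1^{\dir}$. From the second row of $\mathscr{P}u=0$ we obtain $u_2 \equiv - h P_2^{-1} Q_2 u_1$ microlocally on $\gamma_1^{\dir}$. Substituting into the first row yields a scalar equation
\begin{equation*}
    \big( P_1 - h^2 Q_1 P_2^{-1} Q_2 \big) u_1 \equiv 0 \quad \text{m.l. on } \gamma_1^{\dir},
\end{equation*}
whose principal symbol is $p_1$ and therefore is of real-principal type. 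Proposition \ref{prop:scalarmicsol} $(i)$ applied to this scalar operator yields a one-dimensional space of microlocal solutions in $u_1$, and the relation $u_2 \equiv -hP_2^{-1}Q_2 u_1$ extends this uniquely to matrix solutions. The case $j=2$ is symmetric.

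For $(ii)$, under \eqref{eq:NoCaustic} the eikonal equation $p_j(x,\phi_j'(x))=0$ with $\phi_j(0)=0$ has a unique smooth solution by the implicit function theorem (and Poincaré's lemma), exactly as in Proposition \ref{prop:scalarmicsol} $(ii)$. Writing $u = e^{i\phi_j/h} \sigma$ with $\sigma = {}^t(\sigma_{j,1}, \sigma_{j,2})$, the WKB identity \eqref{eq:PseuDO-WKB} turns $\mathscr{P}u\equiv 0$ into a system of transport equations on the amplitude. For $j=1$ the leading-order $h^0$ term on the second component is $p_2(x,\phi_1'(x)) \sigma_{1,2,0}(x) = 0$; since $(x,\phi_1'(x)) \in \gamma_1^{\dir}$ stays off $\Lambda_2$ away from the origin, $p_2(x,\phi_1'(x))$ is non-vanishing there, forcing $\sigma_{1,2,0} \equiv 0$. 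The first-component equation at order $h^0$ is automatically satisfied, and at order $h$ it becomes a first-order linear ODE along the Hamiltonian flow $H_{p_1}$ for $\sigma_{1,1,0}$, which can be solved uniquely up to a constant. Higher-order amplitudes are then determined inductively. The smoothness of $\sigma_{1,1,0}$ at $x=0$ follows because the transport equation does not degenerate at the origin (the coefficient $\partial_\xi p_1(0,\phi_1'(0)) \ne 0$), as referenced in \cite[Remark 5.2]{AF}.

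For $(iii)$, by construction $w_j^{\dir} \equiv e^{i\phi_j/h} \sigma_{j,j,0}(x) \mathbf{e}_j + \BigO{h}$ microlocally on $\gamma_j^{\dir}$. The scalar function $e^{i\phi_j/h}\sigma_{j,j,0}(x)$ is a microlocal solution to $P_j v = 0$ satisfying \eqref{eq:scalarunisol} with $c_j = \sigma_{j,j,0}(0)$. Hence by Definition \ref{def:normalized} $(ii)$ and Proposition \ref{prop:scalarmicsol} $(iii)$, $w_j^{\dir}$ is normalized if and only if $\sigma_{j,j,0}(0)$ equals the constant \eqref{eq:scalarnorconst}.

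For $(iv)$, applying the exact Egorov identity \eqref{eq:ExEgorov} componentwise shows that $M_\theta w_{j,\theta}$ is a microlocal solution of $\mathscr{P} u = 0$ whenever $w_{j,\theta}$ solves $M_\theta^{-1} \mathscr{P} M_\theta u = 0$. Decomposing $w_{j,\theta} \equiv v_{j,\theta} \mathbf{e}_j + \BigO{h}$ with $v_{j,\theta}$ a normalized scalar solution of $M_\theta^{-1} P_j M_\theta v = 0$, the linearity of $M_\theta$ yields $M_\theta w_{j,\theta} \equiv (M_\theta v_{j,\theta}) \mathbf{e}_j + \BigO{h}$, and Proposition \ref{prop:scalarmicsol} $(iv)$ guarantees that $M_\theta v_{j,\theta}$ is a normalized scalar solution of $P_j v = 0$. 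The main technical care lies in part $(ii)$, specifically in verifying that the $h^0$ transport equation on $\sigma_{j,j,0}$ has a solution that extends smoothly across the crossing point; this is essentially the content of the cited remark in \cite{AF} and amounts to inspecting the non-degeneracy of $\partial_\xi p_j$ at the origin under \eqref{eq:NoCaustic}.
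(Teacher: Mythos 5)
Your proposal is correct and takes the same essential route as the paper, which for $(i)$ and $(ii)$ simply cites \cite[Appendix B.2 and Proposition 5.1]{AF} (under \eqref{eq:NoCaustic}) and notes that the general case follows by conjugating with a rotation $M_\theta$ via \eqref{eq:ExEgorov}; you reconstruct the delegated argument. A few comments on what you do differently and where to be careful. For $(i)$, your reduction to a scalar equation by microlocal elliptic inversion of $P_2$ on $\gamma_1^\dir$ is a clean way to transfer the one-dimensionality from Proposition~\ref{prop:scalarmicsol}~$(i)$; note that you are applying that scalar uniqueness to the operator $P_1 - h^2Q_1P_2^{-1}Q_2$, which is not literally of the form $p_j^w$, so strictly speaking you invoke the real-principal-type scalar uniqueness (e.g.\ \cite[Lemme 18]{CdVPa}) for an operator whose principal symbol is $p_1$ — a harmless but worth-noting generalization, and one that also covers the case where \eqref{eq:NoCaustic} fails, which $(i)$ does not assume. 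For $(ii)$, your WKB hierarchy is right; one thing you elide is that while $\sigma_{j,j,0}$ extends smoothly through $x=0$, the subprincipal cross-component $\sigma_{j,3-j,1}(x) = -q_{3-j}(x,\phi_j'(x))\sigma_{j,j,0}(x)/p_{3-j}(x,\phi_j'(x))$ genuinely blows up at $x=0$, which is why the amplitudes live in $C_b^\infty(\Rr\setminus\{0\})$ rather than $C_b^\infty(\Rr)$; this is precisely the obstruction that motivates the normal-form machinery of the paper and is worth stating explicitly so the $C_b^\infty(\Rr\setminus\{0\})$ regularity in the statement is accounted for. Parts $(iii)$ and $(iv)$ coincide with the paper's (very brief) argument.
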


\begin{rmk}
The amplitudes $\sigma_{j,k}$ are singular at $x=0$, which corresponds to the crossing point of $p_1$ and $p_2$. In other words, the usual WKB construction does not work at $x=0$. 

\end{rmk}

The proof of $(i)$ and $(ii)$ is more or less well-known. In fact, $(i)$ and $(ii)$ are consequences of \cite[Appendix B.2 and Proposition 5.1]{AF} whenever \eqref{eq:NoCaustic} holds. When condition \eqref{eq:NoCaustic} is violated, we choose $\theta\in \Rr$ such that $\kappa_{\theta}^*p_1$ and $\kappa_{\theta}^*p_2$ satisfy \eqref{eq:NoCaustic}. By virtue of \eqref{eq:rotation} and \eqref{eq:ExEgorov}, the problem can be reduced to the case where \eqref{eq:NoCaustic} holds. We omit the details. The part $(iii)$ directly follows from Proposition \ref{prop:scalarmicsol} $(iii)$.

Let us consider the case 
where
the operator $\mathscr{P}$ is of the following form:
\begin{equation*}
    \mathscr{P}=P_{\rm nf}:=\begin{pmatrix}
        hD_x&hR_1\\hR_2&hD_x-f(x)
    \end{pmatrix},
\end{equation*}
where $f\in C_b^\infty(\Rr;\Rr)$ be a smooth, real-valued function vanishing at $x=0$,
and $R_1$, $R_2\in\Psi_h^{\operatorname{comp}}$ are pseudo-differential operators.
In this case, the functions $\phi_j$ and $v_j$ are explicitly written as
\begin{equation*}
    \phi_1(x)=0,\quad
    \phi_2(x)=F(x)=\int_0^x f(y)dy,\quad
    v_1(x)=1,\quad
    v_2(x)=e^{\frac ihF(x)}(1+\BigO{h})
\end{equation*}
However, for simplicity, we use the WKB solutions $\tilde{v}_j(x)$ normalized by $|\tilde{v}_j(0)|=1$ instead of $v_j(x)$.
The normalized solutions are given by
\begin{equation*}
    \tilde{v}_1(x)=v_1(x)=1,\quad
    \tilde{v}_2(x)=e^{\frac ihF(x)}.
\end{equation*}
Then the WKB solutions $\tilde{w}_j^\dir$ to the equation $\scP w=0$ associated with $\tilde{v}_j$ admits the asymptotic behavior
\begin{equation}\label{eq:WKB-basis-nf}
    \tilde{w}_1^\dir(x)=\begin{pmatrix}1+\BigOO{h}{C_b^\infty}\\\BigOO{h}{C_b^\infty}\end{pmatrix},\qquad
    \tilde{w}_2^\dir(x)=e^{\frac ihF(x)}\begin{pmatrix}\BigOO{h}{C_b^\infty}\\1+\BigOO{h}{C_b^\infty}\end{pmatrix}\qquad(\dir=\inc,\out).
\end{equation}
The construction of such microlocal solutions is 
detailed 
in Appendix~\ref{App:WKB}.

\subsection{Outline of the proof of Theorems \ref{Thm:ExistenceAndUniqueness} and \ref{Th:LSM}}\label{SubS:OutlineProof}
In this manuscript, we prove our main results in the following way.
In Section~\ref{S:NormalForms}, we show the existence of a Fourier integral operator $\mathcal{F}$ which reduces the original equation $\mathscr{P}u=0$ to the 
reduced equation
$P_{\mathrm{nf}}\mathcal{F}^*u=0$
microlocally near $(0,0)$.
Then in Section~\ref{S:CFReduced}, we prove our main theorems for the 
reduced
operator.
Finally in Section~\ref{S:Correspondence}, we study the correspondence between the transfer matrices for the original operator and for the 
reduced
operator.

In more detail, the normal form reduction of Section~\ref{S:NormalForms} is done by a classical argument.
The symplectomorphism $\kappa$ is taken so that $\kappa^*p_1(x,\xi)=\xi$.
Then we can factorize $\kappa^*p_2=a_0(x,\xi)(\xi-f(x))$ with an elliptic factor $a_0$ by the implicit function theorem, which allows us to regard its quantization as a simple first order operator.
By the standard quantization procedure (\cite[Section 11.2]{Zwo}), we can find Fourier integral operators $F_1$ and $F_2$ which both quantize $\kappa$ such that $F_j^*P_jF_j$ for $j=1,2$ are of simple forms. The operator $\mathcal{F}$ is defined by $\mathcal{F}=\operatorname{diag}(F_1\ F_2)$.

The proof in Section~\ref{S:CFReduced} of the  existence and uniqueness of the microlocal solutions for given 
microlocal Cauchy data 
and the asymptotics of the transfer matrix 
for the reduced operator
is done by a microlocal method.
This differs from the use of exact solutions in previous studies, especially for the tangential case \cite{AFH,Higuchi}.
The main ingredient of this section is the estimate of an integral operator 
where the phase function of the integrand has a degenerate stationary point.

To compare the transfer matrices, we study in Section~\ref{S:Correspondence} the linear relationship between  $u$ and $\mathcal{F}v$, where $u$ is a microlocal solution to the original equation $\mathscr{P}u=0$ and $v$ is that to the 
reduced equation
$Pv=0$.
To see that, we study the action of the Fourier integral operator on WKB states. 
We also write the computation of the asymptotic behavior shown in Theorem~\ref{Th:LSM} at the end of this section.

\section{Reduction to a matrix normal form}\label{S:NormalForms}

In this section, we construct a normal form of our original operator $\ms{P}$ introduced in Subsection \ref{SubS:first}.

\begin{thm}\label{Thm:Matrix-NF}
Assume that Conditions~\ref{C:RealPrincipal} and \ref{C:ContactOrder} are true.
Then, there exist 
\begin{itemize}
\item a symplectomorphism $\kappa$ near $(0,0)$ (see Definition \ref{def:FIO});

\item pseudodifferential operators $A=a^w(x,hD_x)\in \Psi_h$ and $R_j=r_j^w(x,hD_x)\in \Psi_h^{\mathrm{comp}}$ for $j=1,2$ with $a,r_1,r_2\in C_b^\infty(\Rr^2)$;

\item a real-valued function $f\in C_b^\infty(\Rr;\Rr)$;

\item a family $\mc{F}=\{\mc{F}_h\}_{h\in(0,h_0]}=\operatorname{diag}(F_1\ F_2)$ of $L^2$-bounded operators microlocally invertible near $((0,0),(0,0))$,
\end{itemize}
such that the following holds:

\noindent$(i)$ We have
\begin{equation}
        \mc{F}^{-1}
        \mathscr{P}
        \mc{F}
        \equiv \operatorname{diag}(1\ A)
        \begin{pmatrix}
            hD_x&hR_1\\hR_2&hD_x-f(x)
        \end{pmatrix}\quad \text{m.l. near $((0,0),(0,0))$}.
    \end{equation}

\noindent$(ii)$ The operator $A$ is microlocally invertible near $((0,0),(0,0))$. Both operators $F_1$ and $F_2$ quantize $\kappa$ and are microlocally unitary at $((0,0),(0,0))$ in the sense of Definition \ref{def:FIO}.

\noindent$(iii)$ $(\kappa^*p_1)(x,\xi)=\xi$ around $(0,0)$, $\partial_{\xi}(\kappa^*p_2)(0,0)\neq 0$ and
\begin{align}\label{eq:Not-changed}
\partial_{\xi}p_j(0,0)\neq 0\quad (j=1,2)\Rightarrow \frac{\partial_\xi p_1(0,0)\partial_\xi p_2(0,0)}{\partial_\xi(\kappa^*p_1)(0,0)\partial_\xi(\kappa^*p_2)(0,0)}>0.
\end{align}

\noindent$(iv)$ Setting $c=(\partial_{\xi}(\kappa^*p_2)(0,0))^{-1}$, we have
    \begin{align*} 
    r_1(0,0)=q_1(0,0)+O(h),\quad r_2(0,0)=cq_2(0,0)+O(h)  
    \end{align*}
    and
    \begin{equation}\label{Eq:Derivative-f}
            f^{(k)}(0)=0\quad(0\le k\le m-1),\quad
            f^{(m)}(0)=-c(H_{p_1}^mp_2)(0,0).
    \end{equation}

\noindent$(v)$ In the tangential case $m\geq 2$, the constant $c$ is independent of $\kappa$ and calculated explicitly as
\begin{align*}
c
=s\frac{|\partial_{(x,\xi)} p_1(0,0)|}{|\partial_{(x,\xi)} p_2(0,0)|},\quad
    s=\operatorname{sgn}(\partial_{(x,\xi)} p_1(0,0),\partial_{(x,\xi)} p_2(0,0))_{\Rr^2}
    =\operatorname{sgn}(H_{p_1}(0,0),H_{p_2}(0,0))_{\Rr^2}.
\end{align*}

\end{thm}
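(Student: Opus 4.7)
The strategy is classical normal form theory: straighten $p_1$ to $\xi$ via a symplectomorphism $\kappa$, factor $\kappa^*p_2$ by the implicit function theorem, and quantize $\kappa$ via Fourier integral operators, correcting lower-order terms by elliptic pseudodifferential prefactors.

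\emph{Symplectic straightening and factorization.} By Condition \ref{C:RealPrincipal} and Darboux's theorem, there exists a symplectomorphism $\kappa_0$ near $(0,0)$ with $\kappa_0^*p_1 = \xi$. Since $H_{\kappa_0^*p_1} = \partial_x$, Condition \ref{C:ContactOrder} becomes $\partial_x^k(\kappa_0^*p_2)(0,0) = 0$ for $k<m$ and $\partial_x^m(\kappa_0^*p_2)(0,0) = H_{p_1}^m p_2(0,0) \neq 0$. For $m \geq 2$, real-principal-type of $\kappa_0^*p_2$ forces $\partial_\xi(\kappa_0^*p_2)(0,0) \neq 0$; for $m=1$ (transversal case), I post-compose $\kappa_0$ with a symplectic shear $(x,\xi) \mapsto (x + s\xi, \xi)$, which preserves $\xi$ and shifts $\partial_\xi(\kappa^*p_2)(0,0)$ by $s\, H_{p_1}p_2(0,0) \neq 0$, so that a suitable choice of $s$ secures both $\partial_\xi(\kappa^*p_2)(0,0) \neq 0$ and the sign condition in (iii). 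The implicit function theorem then yields
\begin{equation*}
\kappa^*p_2(x,\xi) = a_0(x,\xi)(\xi - f(x)), \qquad a_0(0,0) = \partial_\xi(\kappa^*p_2)(0,0) \neq 0,
\end{equation*}
near $(0,0)$, with $f \in C^\infty(\Rr;\Rr)$, $f(0)=0$. Setting $c := a_0(0,0)^{-1}$ and differentiating this identity $m$ times at $\xi=0$ gives the derivative formulas of (iv) for $f$. For (v), write $d\kappa(0,0)^T = M \in SL_2(\Rr)$; in the tangential case $\partial p_1(0,0), \partial p_2(0,0)$ are collinear and $M$ sends both onto the $\xi$-axis, the proportionality constants yielding $c = s\,|\partial_{(x,\xi)} p_1(0,0)|/|\partial_{(x,\xi)} p_2(0,0)|$, while the identity $(H_{p_1},H_{p_2})_{\Rr^2} = (\partial p_1, \partial p_2)_{\Rr^2}$ gives the alternative expression for $s$.

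\emph{Quantization and operator-level reduction.} Let $\widetilde F_1, \widetilde F_2$ be Fourier integral operators associated with $\kappa$ and microlocally unitary at $((0,0),(0,0))$ (Definition \ref{def:FIO} and Proposition \ref{Prop:FIO-conjugate}). Egorov's theorem gives, microlocally near $((0,0),(0,0))$,
\begin{equation*}
\widetilde F_1^{-1}P_1 \widetilde F_1 \equiv hD_x + hS_1, \qquad \widetilde F_2^{-1}P_2\widetilde F_2 \equiv A_0(hD_x - f(x)) + hS_2,
\end{equation*}
for some $S_j \in \Psi_h$ and $A_0 \in \Psi_h$ with principal symbol $a_0$. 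I then look for elliptic PDOs $B_1, B_2$ with $B_j(0,0) = 1$ such that $F_j := \widetilde F_j B_j$ satisfies $F_1^{-1}P_1 F_1 \equiv hD_x$ and $F_2^{-1}P_2 F_2 \equiv A(hD_x - f(x))$ for an elliptic $A \in \Psi_h$. At principal level this reduces to a transport equation of the form $-i\partial_x b_1 = s_1 b_1$, solved by $b_1(x,\xi) = \exp\bigl(i\int_0^x s_1(y,\xi)\,dy\bigr)$; higher-order corrections are obtained by iteration in powers of $h$ and Borel summation. Defining $R_1 := F_1^{-1}Q_1 F_2$ and $R_2 := A^{-1}F_2^{-1}Q_2 F_1$, truncated to a fixed neighborhood of $(0,0)$ to lie in $\Psi_h^{\mathrm{comp}}$, Egorov applied to the compositions $F_j^{-1}(\cdot)F_k$ together with the microlocal unitarity of the $F_j$ delivers the principal-symbol identities $r_1(0,0) = q_1(0,0) + \BigO{h}$ and $r_2(0,0) = c\,q_2(0,0) + \BigO{h}$. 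Microlocal invertibility of $A$ follows from Proposition \ref{prop:elliptic} since $a_0(0,0) \neq 0$.

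\emph{Main obstacle.} The most delicate point is the iterative construction of the elliptic prefactors $B_1, B_2$ simultaneously achieving the exact microlocal normal form while keeping each $F_j$ microlocally unitary and preserving the factorized structure of the $(2,2)$ entry; one must absorb all subprincipal errors into the prefactor $A$ without producing new errors elsewhere. The procedure is classical (cf. \cite[\S 11.2]{Zwo}), but the combined bookkeeping of the Fourier integral normalization, the transport equations for the $B_j$, and the compact truncation of $R_1, R_2$ requires care.
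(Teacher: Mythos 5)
Your proposal follows essentially the same route as the paper: Darboux to straighten $p_1$, the tangential/transversal dichotomy (using $\{p_1,p_2\}(0,0)=0$ in the tangential case and a shear $(x,\xi)\mapsto(x+s\xi,\xi)$ in the transversal case) to secure $\partial_\xi(\kappa^*p_2)(0,0)\neq 0$ and the sign condition, the implicit function theorem for the factorization, a differentiation argument for the derivatives of $f$, an FIO quantization of $\kappa$ corrected by elliptic prefactors $B_1, B_2$ solved via transport equations as in \cite[Theorem 12.3]{Zwo}, and the symbol calculus for $R_1, R_2$ via Egorov. The only cosmetic difference is that the paper fixes a single unitary FIO $F_0$ and sets $F_j=F_0B_j$, whereas you allow two a priori distinct $\widetilde F_1,\widetilde F_2$; this still delivers $r_1(0,0)=q_1(0,0)+\BigO{h}$ because the microlocal unitarity normalization pins the principal amplitude at the origin, but the paper's common-$F_0$ choice makes the bookkeeping cleaner.
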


\begin{rmk}
In the transversal case, the constant $c$ does depend on the choice of $\kappa$.
\end{rmk}

This theorem is proven in two steps. 
We first find a symplectomorphism $\kappa$ which reduces the symbols into a normal form (Lemma~\ref{Lemma:ClassicalNF}).
We then construct a suitable quantization $\mc{F}$ of $\kappa$ in the second step.
Lemma~\ref{Lemma:QuantumscalarNF} is the crucial part in the second step.

\begin{lemma}\label{Lemma:ClassicalNF}
There exist a locally elliptic real-valued symbol $a_0\in C_b^\infty(\Rr^2)$, a symplectomorphism $\kappa$ near $(0,0)$, and a real-valued function $f\in C_b^\infty(\Rr;\Rr)$ satisfying \eqref{Eq:Derivative-f} and the properties in Theorem~\ref{Thm:Matrix-NF} $(iii)$ such that 
\begin{equation}
        \left\{
        \begin{aligned}
            &(\kappa^*p_1)(x,\xi)=\xi\\
            &(\kappa^*p_2)(x,\xi)=a_0(x,\xi)(\xi-f(x)).
        \end{aligned}\right.
\end{equation}
Moreover, we have $a_0(0,0)=\partial_{\xi}(\kappa^*p_2)(0,0)$ and the constant $c$ satisfies the property stated in Theorem \ref{Thm:Matrix-NF} $(v)$. 

\end{lemma}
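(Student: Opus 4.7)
The plan is to first produce the symplectomorphism $\kappa$ via Darboux's theorem applied to $p_1$, then factor $\kappa^*p_2$ via Hadamard's lemma in $\xi$, and finally verify the derivative information \eqref{Eq:Derivative-f} and the formula for $c$ in the tangential case by exploiting the invariance of Poisson brackets under symplectomorphisms.

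First I would apply Darboux's theorem to $p_1$: since $p_1$ is real with $p_1(0,0)=0$ and $\nabla p_1(0,0)\neq 0$ by Condition~\ref{C:RealPrincipal}, there exists a local symplectomorphism $\kappa_0$ fixing $(0,0)$ with $\kappa_0^*p_1=\xi$. Next I need $\partial_\xi(\kappa^*p_2)(0,0)\neq 0$ together with the sign condition \eqref{eq:Not-changed}. Using Poisson-bracket invariance, $H_{p_1}^k p_2(0,0)=\partial_x^k(\kappa_0^*p_2)(0,0)$ for every $k$. In the tangential case $(m\geq 2)$, this gives $\partial_x(\kappa_0^*p_2)(0,0)=H_{p_1}p_2(0,0)=0$, and $\partial_\xi(\kappa_0^*p_2)(0,0)\neq 0$ then follows from $p_2$ being of real-principal type (the full differential does not vanish). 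In the transversal case I would handle a possible zero of $\partial_\xi(\kappa_0^*p_2)(0,0)$ by post-composing $\kappa_0$ with a $\xi$-preserving symplectomorphism $\tilde\kappa(x,\xi)=(x+g(\xi),\xi)$ with $g(0)=0$; such maps are symplectic and preserve $\kappa^*p_1=\xi$, while shifting $\partial_\xi(\kappa^*p_2)(0,0)$ by $g'(0)\partial_x(\kappa_0^*p_2)(0,0)$. Since $\partial_x(\kappa_0^*p_2)(0,0)\neq 0$ in the transversal case when $\partial_\xi(\kappa_0^*p_2)(0,0)$ vanishes, choosing $g'(0)$ with the right magnitude and sign secures both the nonvanishing and \eqref{eq:Not-changed}.

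With these in place, the factorization comes from the implicit function theorem (producing $f$ with $f(0)=0$ and $\kappa^*p_2(x,f(x))=0$ locally) combined with Hadamard's lemma in the $\xi$-variable:
\begin{equation*}
a_0(x,\xi):=\int_0^1\partial_\xi(\kappa^*p_2)\bigl(x,f(x)+t(\xi-f(x))\bigr)\,dt,\qquad \kappa^*p_2=a_0(x,\xi)(\xi-f(x)),
\end{equation*}
so that $a_0(0,0)=\partial_\xi(\kappa^*p_2)(0,0)\neq 0$. A smooth cutoff extends $f$ and $a_0$ to $C_b^\infty$ functions globally, which is harmless since the theorem is microlocal near $(0,0)$. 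The derivative data \eqref{Eq:Derivative-f} is then read off from $\kappa^*p_2(x,0)=-a_0(x,0)f(x)$: the identity $H_{p_1}^k p_2(0,0)=\partial_x^k(\kappa^*p_2)(0,0)$ combined with Leibniz gives $f^{(k)}(0)=0$ for $k<m$ by induction, and the Leibniz sum at order $m$ collapses to $-a_0(0,0)f^{(m)}(0)$, yielding $f^{(m)}(0)=-cH_{p_1}^m p_2(0,0)$.

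For the value of $c$ in the tangential case I would observe that $\nabla p_1(0,0)$ and $\nabla p_2(0,0)$ are parallel (both are orthogonal to the common direction of $H_{p_1}(0,0)\parallel H_{p_2}(0,0)$), so $\nabla p_2(0,0)=\lambda\nabla p_1(0,0)$ for some $\lambda\in\Rr^*$. The chain rule gives $\nabla(\kappa^*p_j)(0,0)=(d\kappa(0,0))^{\top}\nabla p_j(0,0)$, and since $\nabla(\kappa^*p_1)(0,0)=(0,1)$, linearity yields $\nabla(\kappa^*p_2)(0,0)=(0,\lambda)$. Therefore $\partial_\xi(\kappa^*p_2)(0,0)=\lambda$, $c=1/\lambda$, and the Euclidean magnitude and sign of $\lambda$ give the claimed formula for $c$; the identity $s=\operatorname{sgn}(H_{p_1}(0,0),H_{p_2}(0,0))_{\Rr^2}$ follows because the map $\nabla p_j\mapsto H_{p_j}$ is a $\pi/2$-rotation and thus preserves inner products. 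This calculation also shows that in the tangential case $c$ is intrinsic (it depends only on $p_1,p_2$ and not on the choice of $\kappa$), and that \eqref{eq:Not-changed} holds automatically because $\operatorname{sgn}(\partial_\xi p_1 \partial_\xi p_2)=\operatorname{sgn}(\lambda)=\operatorname{sgn}(\partial_\xi(\kappa^*p_2))$. The main obstacle I anticipate is Step~2 in the transversal case: tuning the single parameter $g'(0)$ to simultaneously enforce nonvanishing and the correct sign of $\partial_\xi(\kappa^*p_2)(0,0)$ requires some bookkeeping, though it is ultimately straightforward because $g'(0)$ ranges over all of $\Rr$.
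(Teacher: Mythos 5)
Your proof is correct and follows essentially the same route as the paper's: Darboux for $p_1$, composition with a $\xi$-preserving symplectomorphism to repair $\partial_\xi(\kappa^*p_2)(0,0)$ in the transversal case, implicit function theorem plus Hadamard's lemma for the factorization, and Poisson-bracket/chain-rule invariance to read off \eqref{Eq:Derivative-f} and the formula for $c$. The only noteworthy deviations are cosmetic: you obtain the Taylor data of $f$ by Leibniz-differentiating the factored identity $\kappa^*p_2(x,0)=-a_0(x,0)f(x)$ rather than the implicit equation $(\kappa^*p_2)(x,f(x))=0$ (which avoids the paper's $g_k$ bookkeeping), and you compute $c$ in the tangential case via $\nabla(\kappa^*p_2)(0,0)=d\kappa(0,0)^\top\nabla p_2(0,0)$ rather than via the low-order Taylor expansion $p_2=\tilde c\,p_1+\BigO{|(x,\xi)|^2}$.
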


\begin{proof}

\underline{Step 1}: First, we construct a symplectomorphism $\kappa$ near $(0,0)$ satisfying the properties in Theorem~\ref{Thm:Matrix-NF} $(iii)$.
By virtue of Darboux's theorem (\cite[Theorem 12.1]{Zwo} with $A = \{1\}$ and $B = \emptyset$), there exists a symplectomorphism $\kappa_0$ preserving $(0,0)$ such that $(\kappa_0^*p_1)(x,\xi) = \xi$. 

\noindent$(1)$ In the tangential case, Condition~\ref{C:ContactOrder} implies $\{p_1,p_2\}(0,0)= 0$ and hence
\begin{align*}
\partial_{x}(\kappa_0^*p_2)(0,0)=\{(\kappa_0^*p_1),(\kappa_0^*p_2) \}(0,0)=0,
\end{align*}
where we use the fact that $\kappa_0$ preserves the Poisson bracket (see Jacobi's formula in \cite[Theorem 2.10]{Zwo}). Then we obtain $\partial_{\xi}(\kappa_0^*p_2)(0,0)\neq 0$ from Condition~\ref{C:RealPrincipal}. Moreover, the condition \eqref{eq:Not-changed} is automatically satisfied in the tangential case.
Thus we can take $\kappa:=\kappa_0$.

\noindent$(2)$ In the transversal case, it may happen that $\partial_{\xi}(\kappa_0^*p_2)(0,0)=0$ (for instance, one can consider $(\kappa_0^*p_2)(x,\xi)=x$) or \eqref{eq:Not-changed} may not be satisfied. 
In this case, we obtain the desired $\kappa$ by composing $\kappa_0$ with a linear symplectomorphism $\kappa_{\lambda}$. 
More precisely, we define $\kappa_{\lambda}(x,\xi)=(x+\lambda\xi,\xi)$ and put $\kappa:=\kappa_0\circ \kappa_{\lambda}$, where $\lambda\in \Rr$ is determined below. It is easy to check that the mapping $\kappa$ is a symplectomorphism preserving $(0,0)$, satisfying $(\kappa^*p_1)(x,\xi)=\xi$ and
\begin{align*}
\partial_{\xi}(\kappa^*p_2)(0,0)=\partial_{x}(\kappa_0^*p_2)(0,0)+\partial_{\xi}(\kappa_0^*p_2)(0,0)=\lambda\{p_1,p_2\}(0,0)+\partial_\xi(\kappa_0^*p_2)(0,0).
\end{align*}
Since $\{p_1,p_2\}(0,0)\neq 0$, we can find $\lambda\in \Rr$ such that $\partial_{\xi}(\kappa^*p_2)(0,0)\neq 0$ and \eqref{eq:Not-changed} (by choosing a sufficiently large $\lambda$ with the appropriate sign).

\underline{Step 2}:
According to the implicit function theorem with $\partial_{\xi}(\kappa^*p_2)(0,0)\neq 0$, there exists a globally defined, real-valued, smooth function $f$ such that, in a neighborhood of $(0,0)$,
\[
(x,\xi) \in \{\kappa^*p_2 = 0\} \iff \xi = f(x).
\]
On the other hand, Taylor's theorem at $(x,\xi)=(x,f(x))$ implies 
\[
(\kappa^*p_2)(x,\xi) = a_0(x,\xi)(\xi - f(x)),\quad \text{where} \,\, a_0(x,\xi):= \int_0^1 \partial_\xi (\kappa^*p_2)(x,f(x) + t(\xi - f(x)))dt. 
\]
Since $(0,0)\in \Lambda_2$, we have $f(0) = 0$ and hence $a_0(0,0) = \partial_\xi (\kappa^*p_2)(0,0) \neq 0$.

\underline{Step 3}: 
We prove that the function $f$ satisfies \eqref{Eq:Derivative-f}. 
To this end, we observe $\partial_x^{k}(\kappa^*p_2)=H_{p_1}^kp_2$ due to Jacobi's formula \cite[Theorem 2.10]{Zwo} with $(\kappa^*p_1)(x,\xi)=\xi$. Then Condition \ref{C:ContactOrder} implies
\begin{align}\label{Eq:tildep_2}
(\partial_x^{k}(\kappa^*p_2))(0,0)=0\quad (k\in \{1,\hdots,m-1\}),\quad (\partial_x^{m}(\kappa^*p_2))(0,0)=(H_{p_1}^mp_2)(0,0)\neq 0.
\end{align}
Differentiating the implicit equation $(\kappa^*p_2)(x,f(x)) = 0$, we obtain
\begin{equation*}
    f'(x) = -\frac{(\partial_x (\kappa^*p_2))(x,f(x))}{(\partial_\xi (\kappa^*p_2))(x,f(x))}.
\end{equation*}
Here we note that the denominator is a smooth function that does not vanish around $x=0$ by Step 1. 
For the higher order derivatives, we inductively obtain
\begin{equation}\label{Eq:highderofinproof}
    f^{(k)}(x)=-\frac{(\partial_x^k(\kappa^*p_2))(x,f(x))}{(\partial_\xi(\kappa^*p_2))(x,f(x))}+g_k(x),
\end{equation}
where $g_k$ is a sum of terms having a factor of $(\partial_x^l(\kappa^*p_2))(x,f(x))$ with $l\in\{0,1,\ldots,k-1\}$ and hence $g_k(0)=0$ for $k\in \{1,\hdots,m\}$ by \eqref{Eq:tildep_2}.
In fact, this formula is true for $k=1$ so that
\begin{equation*}
    \frac{\partial}{\partial x}\left((\partial_x^k(\kappa^*p_2))(x,f(x))\right)
    =(\partial_x^{k+1}(\kappa^*p_2))(x,f(x))
    -\frac{(\partial_x(\kappa^*p_2))(x,f(x))(\partial_\xi\partial_x^k(\kappa^*p_2))(x,f(x))}{(\partial_\xi(\kappa^*p_2))(x,f(x))},
\end{equation*}
which proves \eqref{Eq:highderofinproof}.
Now the identity \eqref{Eq:highderofinproof} with \eqref{Eq:tildep_2} and $g_k(0)=0$ for $k\in \{1,\hdots,m\}$ shows \eqref{Eq:Derivative-f}.

\underline{Step 4}: Finally,  we see that $c=(\partial_{\xi}(\kappa^*p_2)(0,0))^{-1}$ satisfies the property in Theorem \ref{Thm:Matrix-NF} $(v)$. Suppose $m\geq 2$. In this case, one has $H_{p_1}p_2(0,0)=\{p_1,p_2\}(0,0)=0$ and hence $p_2(x,\xi)=\tilde{c}p_1(x,\xi)+\BigO{|(x,\xi)|^2}$ near $(x,\xi)=(0,0)$ with a constant $\tilde{c}\in \Rr$. The condition $\partial_{(x,\xi)} p_2(0,0)\neq0$ implies that 
\begin{equation*}
    \tilde{c}=\frac{\braket{\partial_{(x,\xi)} p_1(0,0),\partial_{(x,\xi)} p_2(0,0)}}{|\partial_{(x,\xi)} p_1(0,0)|^{2}}\neq0.
\end{equation*}
Then, $\kappa^*p_2(x,\xi)=\tilde{c}\kappa^*p_1(x,\xi)+\BigO{|(x,\xi)|^2}
=\tilde{c}\xi+\BigO{|(x,\xi)|^2}$ gives $c^{-1}=\partial_{\xi}(\kappa^*p_2)(0,0)=\tilde{c}$. 
\end{proof}

\begin{lemma}\label{Lemma:QuantumscalarNF}
Let $\kappa$ be a symplectomorphism as in Lemma~\ref{Lemma:ClassicalNF}. 
Then there exist an operator $F_0$ that quantizes $\kappa$ and is microlocally invertible near $((0,0),(0,0))$ in the sense of Definition \ref{def:FIO} $(i)$ and three pseudodifferential operators $A=a^w(x,hD_x)$, $B_1=b_1^w(x,hD_x)$, $B_2=b_2^w(x,hD_x)$ with $a,b_1,b_2\in C_b^\infty(\Rr^2)$ such that 
\begin{equation*}
    (F_0B_1)^{-1}P_1(F_0B_1)\equiv hD_x,\quad
    (F_0B_2)^{-1}P_2(F_0B_2)\equiv A(hD_x-f(x)),
\end{equation*}
microlocally near $((0,0),(0,0))$, $a(0,0)=a_0(0,0)+\BigO{h}$, and $b_j(0,0)=1$ ($j=1,2$).
In particular, $A$ is microlocally invertible near $((0,0),(0,0))$.
\end{lemma}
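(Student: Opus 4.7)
The plan is to proceed in two stages: first use Egorov's theorem to normalize the principal symbols via a Fourier integral operator, then iteratively kill the subprincipal corrections by pseudodifferential conjugations.

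For the first stage, invoke Proposition~\ref{Prop:FIO-conjugate}$(i)$ to construct a Fourier integral operator $F_0$ quantizing the symplectomorphism $\kappa$ from Lemma~\ref{Lemma:ClassicalNF}, with amplitude satisfying $\alpha_0(0,0) \neq 0$ so that $F_0$ is microlocally invertible near $((0,0),(0,0))$. Setting $Q_j := F_0^{-1} P_j F_0$, Egorov's theorem gives that $Q_j$ is a pseudodifferential operator with Weyl symbol $q_j \sim \sum_{k \ge 0} h^k q_{j,k}$ where $q_{1,0}(x,\xi) = \xi$ and $q_{2,0}(x,\xi) = a_0(x,\xi)(\xi - f(x))$ by Lemma~\ref{Lemma:ClassicalNF}.

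For the second stage, construct $B_1 = b_1^w(x,hD_x)$ iteratively with $b_1 \sim \sum_{k \ge 0} h^k b_{1,k}$ solving $q_1 \# b_1 = b_1 \# \xi$ modulo $\mc{O}(h^\infty)$; expanding the Moyal product, at order $h^{k+1}$ this becomes a transport equation $\partial_x b_{1,k} = G_k$ along the Hamiltonian flow of $\xi$, with $G_k$ determined by the previously chosen symbols, solvable by quadrature with initial data on the $\xi$-axis chosen so that $b_{1,k}(0,0) = \delta_{k,0}$. For the $P_2$ side, simultaneously build $A = a^w$ and $B_2 = b_2^w$ with $a \sim a_0 + \sum_{k \ge 1} h^k a_k$ and $b_2 \sim \sum_{k \ge 0} h^k b_{2,k}$ solving $q_2 \# b_2 = b_2 \# a \# (\xi - f)$. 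A Moyal expansion shows that at each order $h^{k+1}$ the terms involving $b_{2,k+1}$ cancel, leaving an equation of the form
\[
a_{k+1}(\xi - f) = R_k + \frac{a_0}{i\, b_{2,0}} H_{\xi - f}(b_{2,k}),
\]
where $H_{\xi - f} = \partial_x + f'(x)\partial_\xi$ and $R_k$ is explicit in the already-chosen symbols. Restricting to the characteristic set $\{\xi = f(x)\}$ yields a first-order linear ODE along the non-vanishing vector field $H_{\xi - f}|_{\{\xi = f\}}$ for the trace $b_{2,k}|_{\{\xi = f\}}$; choosing initial data $b_{2,k}(0,0) = \delta_{k,0}$ and solving by quadrature makes the right-hand side divisible by $(\xi - f)$, thereby determining $a_{k+1}$. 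Any smooth extension of $b_{2,k}$ off the characteristic set completes the step, and Borel summation produces $a, b_1, b_2 \in C_b^\infty(\Rr^2)$ satisfying the required normalizations $a(0,0) = a_0(0,0) + O(h)$ and $b_j(0,0) = 1$.

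The microlocal invertibility of $A$ near $((0,0),(0,0))$ then follows from $a(0,0) = a_0(0,0) \neq 0$ via Proposition~\ref{prop:elliptic}$(i)$. The main obstacle lies in the $P_2$ step: since the principal symbol $a_0(\xi - f)$ vanishes on the characteristic set, a direct division is not permitted and one must split the homological equation into a tangential component (absorbed by the freedom in $B_2$) and a transversal component (absorbed by the freedom in $A$). This separation relies on the real-principal-type nature of $\xi - f$ at the origin.
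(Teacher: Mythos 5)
Your proposal is correct in outline but takes a genuinely different and more laborious route from the paper for the $P_2$ step. The paper does not try to solve a homological equation with two coupled unknowns $a$ and $b_2$ and a tangential/transversal splitting on the characteristic set. Instead, after writing $F_0^{-1}P_2F_0 \equiv A_0(hD_x-f) + h\varepsilon_2^w$ with $A_0 = a_0^w$ elliptic near the origin, it first left-multiplies by $A_0^{-1}$ to obtain the operator $hD_x - f + hA_0^{-1}\varepsilon_2^w$, whose principal symbol is exactly $\xi - f$. This is now a standard one-unknown normal form problem for a real-principal-type scalar operator, solved directly by the cited construction \cite[Theorem 12.3, (12.2.11)]{Zwo}, yielding $B_2$ with $b_2(0,0)=1$ and $B_2^{-1}(hD_x-f+hA_0^{-1}\varepsilon_2^w)B_2 \equiv hD_x-f$. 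Setting $A := B_2^{-1}A_0B_2$ then recovers the factored form $(F_0B_2)^{-1}P_2(F_0B_2) \equiv A(hD_x-f)$ by a purely algebraic rearrangement, and $a(0,0)=a_0(0,0)+\BigO{h}$ follows from $b_2(0,0)=1$ and the composition calculus. What you buy with your approach is a self-contained cascade of transport equations; what the paper's trick buys is avoiding the delicate divisibility argument entirely, since the factored symbol never enters the homological equations. Your observation that the $b_{2,k+1}$-terms cancel at order $h^{k+1}$ is correct and makes your scheme viable, but you should also note that the Poisson bracket appearing at each order is $\{q_{2,0},\cdot\}=\{a_0(\xi-f),\cdot\}$, which reduces to $a_0H_{\xi-f}$ only after restricting to $\{\xi=f\}$, so the transport operator along the characteristic set carries the factor $a_0$ that you must carefully track when imposing $b_{2,k}(0,0)=\delta_{k,0}$.
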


\begin{proof}
By Lemma \ref{Lemma:ClassicalNF} and \cite[Theorem 11.6]{Zwo}, there exists a unitary operator $F_0$ that quantizes $\kappa$ microlocally near $((0,0),(0,0))$. In particular, we have
\begin{align*}
F_0^{-1}P_1F_0\equiv hD_x+h\varepsilon_1^w,\quad F_0^{-1}P_2F_0\equiv A_0(hD_x-f(x))+h\varepsilon_2^w,\,\, \stext{m.l. near} ((0,0),(0,0))
\end{align*}
with symbols $\varepsilon_1,\varepsilon_2\in C_b^\infty$, where we put $A_0=a_0^w(x,hD_x)$ with $a_0$ defined in Lemma~\ref{Lemma:ClassicalNF}.
The proof of \cite[Theorem 12.3]{Zwo} (see \cite[$(12.2.11)$]{Zwo}) shows the existence of pseudodifferential operators $B_1=b_1^w$ and $B_2=b_2^w$ such that $b_1(0,0)=b_2(0,0)=1$ and
\begin{align*}
B_1^{-1}(hD_x+h\varepsilon_1^w)B_1\equiv hD_x,\quad B_2^{-1}\left(hD_x-f(x)+hA_0^{-1}\varepsilon_2^w\right)B_2\equiv hD_x-f(x)
\end{align*}
microlocally near $ ((0,0),(0,0))$.
We used the fact that $A_0$, $B_1$ and $B_2$ are microlocally invertible near $((0,0),(0,0))$ thanks to Proposition \ref{prop:elliptic}. Now the lemma is proven with $A=B_2^{-1}A_0B_2$. 
\end{proof}

Now we prove Theorem~\ref{Thm:Matrix-NF}.
\begin{proof}[Proof of Theorem~\ref{Thm:Matrix-NF}]
We define $\mc{F}:=\diag(F_1\ F_2)$ with $F_1=F_0B_1$ and $F_2=F_0B_2$. Since $F_0$ quantizes $\kappa$, the operators $F_1$ and $F_2$ quantize $\kappa$ as well by the composition rule of the psedudodifferential operators (Proposition~\ref{Prop:FIO-conjugate} $(ii)$). This proves $(ii)$. Lemma~\ref{Lemma:QuantumscalarNF} yields
\begin{align*}
\mc{F}^{-1}\mathscr{P}\mc{F}\equiv \begin{pmatrix}
hD_x&hF_1^{-1}Q_1F_2\\
hF_2^{-1}Q_2F_1&A(hD_x-f(x))
\end{pmatrix}\equiv \begin{pmatrix}
1&0\\
0&A
\end{pmatrix} \begin{pmatrix}
hD_x&hR_1\\
hR_2&hD_x-f(x)
\end{pmatrix}
\end{align*}
m.l. near $((0,0),(0,0))$, where we set $R_1=F_1^{-1}Q_1F_2$ and $R_2=A^{-1}F_2^{-1}Q_2F_1$. Thus, $(i)$ follows.

Since $F_0$ quantizes $\kappa$, its conjugation $F_0^{-1}Q_jF_0$ is a pseudodifferential operator with symbol $\kappa^*q_j+\BigOO{h}{C_b^\infty}$ near $(0,0)$ (see Proposition~\ref{Prop:FIO-conjugate}). 
Recall that we have put $F_j=F_0B_j$ with pseudodifferential operators $B_j=b_j^w$ with $b_j(0,0)=1$.
Therefore, $R_1$ and $R_2$ are pseudodifferential operators whose symbols $r_1,r_2\in C_b^\infty$ satisfy $r_1=b_1^{-1}b_2\kappa^{*}q_1+\BigOO{h}{C_b^\infty}$ and $r_2=a^{-1}b_1b_2^{-1}\kappa^*q_2+\BigOO{h}{C_b^\infty}$ near $(0,0)$ by the composition rule. Next, we observe that $a(0,0)=a_3(0,0)+\BigO{h}=\partial_{\xi}(\kappa^*p_2)(0,0)(\neq 0)+\BigO{h}$. 
Hence $r_2(0,0)=a(0,0)^{-1}q_2(0,0)=cq_2(0,0)+\BigO{h}$, where we recall $\kappa(0,0)=(0,0)$ and that we have defined $c=(\partial_{\xi}(\kappa^*p_2)(0,0))^{-1}$. 
The relation $r_1(0,0)=q_1(0,0)+\BigO{h}$ is proved similarly. 
This concludes the proof of the first statement of $(iv)$.

The statement $(v)$ and the second statement of $(iv)$ follow from Lemma \ref{Lemma:ClassicalNF}. Moreover, we may assume that $F_1$ and $F_2$ are microlocally unitary at $((0,0),(0,0))$ thanks to Proposition \ref{Prop:FIO-conjugate} $(ii)$. This with the microlocal invertibility of $A$ (Lemma \ref{Lemma:QuantumscalarNF}) proves $(iii)$,
which concludes the proof.
\end{proof}

\section{Microlocal connection formula for the reduced equation}\label{S:CFReduced}

In this section, we work with the reduced operator
\begin{align*}
P=P_{\rm nf}=\begin{pmatrix}
hD_x&hR_1\\
hR_2&hD_x-f(x)
\end{pmatrix}
\end{align*}
where $R_1,R_2\in \Psi_h^{\mathrm{comp}}$
and
$f\in C_b^\infty(\Rr)$ is a real-valued function such that
\begin{align}\label{eq:fassumption}
f^{(j)}(0)=0\quad \text{for}\quad j=0,1,\hdots,m-1,\quad f^{(m)}(0)\neq 0.
\end{align}
Note that one has $\gamma_1^\inc=\{\xi=0,\,x<0\}\cap\Omega$, $\gamma_1^\out=\{\xi=0,\,x>0\}\cap\Omega$, $\gamma_2^\inc=\{\xi=f(x),\,x<0\}\cap\Omega$, and $\gamma_2^\out=\{\xi=f(x),\,x>0\}\cap\Omega$.
We fix $I^\inc\Subset(-\infty,0)$, $I^\out\Subset(0,+\infty)$ and two microlocal solutions $\tw_j^\dir$ of $Pu = 0$ near each $\rho_j^\dir\in\gamma_j^\dir$ for $\dir = \inc,\out$ and $j = 1,2$ such that locally for $x \in I^\dir$,
\begin{align}\label{eq:normalWKB}
    \tw_1^\dir(x) =
    \begin{pmatrix}
    1 + \BigOO{h}{C^\infty_b}\\ \BigOO{h}{C^\infty_b}
    \end{pmatrix}
    \stext{and}
    \tw_2^\dir(x) = e^{\frac{i}{h}F(x)}
    \begin{pmatrix}
    \BigOO{h}{C^\infty_b}\\ 1 + \BigOO{h}{C^\infty_b}
    \end{pmatrix}.
\end{align}
Such solutions exist as stated in Proposition \ref{prop:modelWKBconst}. For $R,R'\in \Psi_h^{\mathrm{comp}}$ and fixed $x_0<0$, we define
\begin{align*}
&F(x):=\int_0^xf(y)dy,\quad \Gamma_{0,R}v(x):=\int_{x_0}^xRv(y)dy,\quad \Gamma_{\pm,R}v(x):=\int_{x_0}^{x}e^{\pm\frac ih F(y)}Rv(y)dy.
\end{align*}
Moreover, we define
\begin{align}\label{eq:KRdef}
\Gamma_+:=\Gamma_{+,(R_1)_f},\quad \Gamma_-=\Gamma_{-,R_2},\quad K_{1}:=-\Gamma_+\circ\Gamma_-,\quad K_2:=-\Gamma_+\circ\Gamma_-,
\end{align}
where we set $R_f:=e^{-\frac{i}{h}F(x)}Re^{\frac{i}{h}F(x)}\in \Psi_h^{\mathrm{comp}}$.
From \eqref{eq:fassumption}, it is easy to see
\begin{align}\label{eq:Fcond}
F^{(j)}(0)=0\quad \text{for}\quad j=0,1,\hdots,m,\quad F^{(m+1)}(0)=f^{(m)}(0)\neq 0.
\end{align}

We consider the following microlocal Cauchy problem:
\begin{align}\label{eq:modelCauchy}
Pu\equiv 0\quad\text{m.l. on $\Omega$},\quad u\equiv \alpha_j^{\inc}\tw_j^{\inc} \quad\text{m.l. near $\rho_j^{\inc}$},
\end{align}
where $\Omega\Subset \Rr^2$ is an open neighborhood of $(0,0)$, $\alpha_j^{\inc}\in \mathbb{C}$, and $\rho_j^{\inc}\in \gamma_j^{\inc}\cap \Omega$ ($j=1,2$).

The next propositions are the main results of this section. The first one states the uniqueness and the existence of the microlocal Cauchy problem \eqref{eq:modelCauchy} and the second one provides the asymptotic expansion of the transfer matrix.

\begin{prop}[Existence and uniqueness for the reduced problem]\label{Prop:Uniqueness-NF}

There exist $h_0\in (0,1]$ and an open neighborhood $\Omega\Subset \Rr^2$ of $(0,0)$ such that the following holds for $h\in(0, h_0]$: 

\noindent$(i)$ The solution of the microlocal Cauchy problem \eqref{eq:modelCauchy} is unique. Namely, if a tempered family $u$ satisfies \eqref{eq:modelCauchy} with $\alpha_1^\inc=\alpha_2^\inc=0$ for some $\rho_j^{\inc}\in \gamma_j^{\inc}\cap \Omega$ $(j=1,2)$, then
\begin{equation*}
u\equiv0\quad\text{m.l. on }\Omega.
\end{equation*}

\noindent$(ii)$ Let $\rho_j^{\inc}\in \gamma_j^{\inc}\cap \Omega$. Then, for each $(\alpha_{1}^{\inc},\alpha_2^{\inc})\in\mathbb{C}^2$, there exists a tempered family $u={}^t(u_1,u_2)$ such that \eqref{eq:modelCauchy} is true. Moreover, if $\alpha_j^{\inc}$ is depending on $h>0$ and $|\alpha_j^{\inc}|=\BigO{1}$ as $h\to 0$, then
\begin{align}
u_1(x) \equiv & \ \alpha_1^{\inc}-i\alpha_2^{\inc}\Gamma_{+}(\mathbbm{1})(x)+\BigOO{h^{\frac{2}{m+1}}\logdelt }{L^{\infty}}\label{eq:exasymexu_1}\\
u_2(x) \equiv & \ e^{\frac{i}{h}F(x)}\left(\alpha_2^{\inc}-i\alpha_1^{\inc}\Gamma_{-}(\mathbbm{1})(x) \right)+ \BigOO{h^{\frac{2}{m+1}}\logdelt}{L^{\infty}}\label{eq:exasymexu_2}
\end{align}
hold m.l. on $\Omega$. Here, $\mathbbm{1}$ denotes the constant function equal to one. 

\end{prop}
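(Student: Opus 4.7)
The plan is to mimic the strategy of the simple case in Subsection~\ref{SubS:simple} microlocally: reduce the system to a scalar integral equation via the Duhamel formula and then solve it by Neumann series. I would first unpack $Pu\equiv 0$ into the two scalar equations $hD_x u_1 \equiv -hR_1 u_2$ and $(hD_x-f)u_2 \equiv -hR_2 u_1$ microlocally on $\Omega$. An elliptic parametrix (Proposition~\ref{prop:elliptic}) applied to the scalar operators $hD_x$ and $hD_x-f$ localizes $u_1$ and $u_2$ on $\gamma_1=\{\xi=0\}$ and $\gamma_2=\{\xi=f(x)\}$ respectively. Integrating each scalar first-order equation along the Hamiltonian flow starting from the initial points $\rho_j^\inc$ (Duhamel formula) yields, microlocally on $\Omega$,
\begin{align*}
u_1(x) &\equiv \alpha_1^{\inc} - i\Gamma_{0,R_1}(u_2)(x),\\
u_2(x) &\equiv e^{\frac{i}{h}F(x)}\bigl(\alpha_2^{\inc} - i\Gamma_{-,R_2}(u_1)(x)\bigr).
\end{align*}
Substituting the second line into the first and using the conjugation identity $R_1 e^{iF/h}=e^{iF/h}(R_1)_f$ produces the closed scalar integral equation
\begin{equation*}
u_1 \equiv \alpha_1^{\inc} - i\alpha_2^{\inc}\Gamma_+(\mathbbm{1}) + K_1 u_1\quad\text{m.l.\ on }\Omega,
\end{equation*}
and symmetrically for $u_2$.

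The key input is the degenerate stationary-phase bound $\|K_j\|_{L^\infty\to L^\infty} = \BigO{h^{1/(m+1)}}$ from Proposition~\ref{prop:normbdd}, which relies on the vanishing order assumption~\eqref{eq:Fcond} of $F$ at $0$. For $h$ small enough this makes $I - K_1$ invertible by Neumann series on $L^\infty$ (with $L^\infty$-boundedness of $R_1,R_2$ provided by Lemma~\ref{Lem:L-infty-bdd}), yielding $u_1 \equiv (I-K_1)^{-1}(\alpha_1^{\inc} - i\alpha_2^{\inc}\Gamma_+(\mathbbm{1}))$. For the uniqueness statement $(i)$, setting $\alpha_j^\inc = 0$ forces $u_1 \equiv K_1 u_1$, so iteration gives $\|u_1\|_{L^\infty} = \BigO{h^{N/(m+1)}}$ for every $N$, hence $u_1 \equiv 0$ microlocally; the argument for $u_2$ is identical with $K_2$. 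For the expansion in $(ii)$, the refined estimates $\|\Gamma_\pm(\mathbbm{1})\|_{L^\infty}=\BigO{h^{1/(m+1)}}$ and $\|K_j(\mathbbm{1})\|_{L^\infty}=\BigO{h^{2/(m+1)}\logdelt}$, also from Proposition~\ref{prop:normbdd}, reorganize the Neumann series as
\begin{equation*}
u_1 = \alpha_1^{\inc} - i\alpha_2^{\inc}\Gamma_+(\mathbbm{1}) + \alpha_1^{\inc}K_1(\mathbbm{1}) - i\alpha_2^{\inc}K_1(\Gamma_+(\mathbbm{1})) + K_1^2(\cdots),
\end{equation*}
and a term-by-term inspection shows that everything past the first two terms is $\BigO{h^{2/(m+1)}\logdelt}$, which delivers~\eqref{eq:exasymexu_1}. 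The analogous computation for $u_2$ gives~\eqref{eq:exasymexu_2}.

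The main obstacle is establishing the sharp bound $\|K_j(\mathbbm{1})\|_{L^\infty}=\BigO{h^{2/(m+1)}\logdelt}$. Since $K_j = -\Gamma_\pm \Gamma_\mp$ is a double oscillatory integral with phase $F(y)-F(z)$ having a degenerate critical point of order $m$ on the diagonal at the crossing, the analysis requires a careful degenerate stationary-phase argument that must also accommodate the pseudodifferential action of $R_1,R_2$ (in contrast with the multiplication operators of Subsection~\ref{SubS:simple}). The logarithmic factor $\logdelt$, which is non-trivial only in the transversal case $m=1$, reflects the borderline integrability at the diagonal; this is exactly what Proposition~\ref{prop:normbdd} will be designed to produce. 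Once that estimate is in place, the Neumann inversion and the extraction of the asymptotics are routine.
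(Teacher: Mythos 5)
Your overall strategy---Duhamel reduction to a fixed-point equation for $u_1$, Neumann inversion using $\|K_j\|_{L^\infty\to L^\infty}=\BigO{h^{1/(m+1)}}$, and then extraction of the asymptotics from the refined bounds on $\Gamma_\pm(\mathbbm{1})$ and $K_j(\mathbbm{1})$---is exactly the one the paper follows. However, two steps you treat as automatic are in fact the genuine technical content of the proof, and as written your argument has gaps there.

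First, the microlocal Duhamel formula (Lemma~\ref{Lem:Duhamel}) only produces \emph{some} constant $C$, not the incoming Cauchy datum $\alpha_j^\inc$ directly. Identifying $C$ with $\alpha_j^\inc$ (or with $\BigO{h^\infty}$ in the uniqueness case) is nontrivial and requires controlling the behavior of the integral operators on incoming intervals $I_\inc\Subset(-\infty,0)$. In the uniqueness argument you need to know that $\Gamma_\pm r$ and $K_j r$ are $\BigO{h^\infty}$ on $I_\inc$ whenever $r$ itself is; this is precisely Lemma~\ref{lem:inczero}, and it relies on pseudolocality of $R_1,R_2$ plus the support condition on their kernels. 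Without it, the constant produced by Duhamel cannot be evaluated, and the chain $u_1\equiv K_1 u_1$ m.l.\ on $\Omega$ is not established.

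Second, you assert that ``iteration gives $\|u_1\|_{L^\infty}=\BigO{h^{N/(m+1)}}$ for every $N$.'' But $u_1\equiv K_1 u_1$ is a \emph{microlocal} identity, and $K_1$ is not pseudolocal, so one cannot simply apply $K_1$ to both sides to obtain $u_1\equiv K_1^N u_1$. The paper explicitly flags this (footnote in the proof of Proposition~\ref{Prop:Uniqueness-NF}) and resolves it by rerunning the entire Duhamel-plus-constant-evaluation procedure $N$ times. Your proposal skips over this, which is where an inexperienced reader would most plausibly go wrong.

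A smaller point for part $(ii)$: constructing the solution as the Neumann series is fine, but one still has to verify that the constructed $u$ has the prescribed \emph{microlocal} Cauchy data, i.e.\ $u\equiv\alpha_j^\inc\tw_j^\inc$ near $\rho_j^\inc$ for the normalized WKB states $\tw_j^\inc$ of \eqref{eq:normalWKB}. The constructed exact solution equals $(1,0)$ or $(0,e^{iF/h})$ at the incoming boundary only up to $\BigO{h}$, so one needs the matrix $A=I+\BigO{h}$ from Lemma~\ref{lem:basisexist}~$(iii)$ and its inverse to correct for this. Including that step would complete the argument.
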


Now we can define the transfer matrix $T$ with respect to the basis which we have fixed in \eqref{eq:normalWKB}. In fact, from Proposition \ref{Prop:Uniqueness-NF}, we can define a linear map $(\alpha_1^\inc,\alpha_2^\inc) \in \Cc^2 \mapsto u$ whose range is included in the set of tempered families $u$ satisfying \eqref{eq:modelCauchy}. 
From Proposition \ref{Prop:matrixmicsol}, for $j=1,2$ there exists $\alpha_j^{\out}\in\mathbb{C}$ such that $u=\alpha_j^{\out}\tw_j^{\out}$ m.l. 
near
$\rho_j^{\out}$. 
Then we have a linear map $u \mapsto (\alpha_1^\out,\alpha_2^\out) \in \Cc^2$. We let $T:(\alpha_1^\inc,\alpha_2^\inc)\mapsto (\alpha_1^\out,\alpha_2^\out)$ be the composition of these two linear maps.

\begin{prop}[Microlocal connection formula for the reduced operator] \label{Prop:AsymptoticsOfTransferMatrix}
The matrix $T$ admits the asymptotic formula 
\begin{equation}\label{Eq:MicrolocalTransferMatrix}
T = 
\begin{pmatrix}
    1& 0\\
    0 & 1\\
\end{pmatrix}
-i h^{\frac1{m+1}}
\begin{pmatrix}
    0& r_1(0,0)\omega\\
    r_2(0,0)\overline{\omega} & 0\\
\end{pmatrix}
+ \BigO{h^{\frac{2}{m+1}}\logdelt},
\end{equation}
where $r_j$ are symbols of $R_j$: $R_j=r_j^w(x,hD_x)$ and
\begin{align}\label{Eq:omega}
\omega=2\mu_m\left(\frac{\sgn(f^{(m)}(0))}{2(m+1)}\pi\right)
\mathbf{\Gamma}\left(\frac{m+2}{m+1}\right) 
\left(\frac{(m+1)!}{|f^{(m)}(0)|}\right)^{\frac{1}{m+1}}. 
\end{align}
with $\mu_m(\theta) := 2^{-1}(e^{i\theta} + e^{i(-1)^{m+1}\theta})$. 
\end{prop}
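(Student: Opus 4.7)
The starting point is Proposition~\ref{Prop:Uniqueness-NF}\,$(ii)$: for Cauchy data $(\alpha_1^\inc,\alpha_2^\inc)$ with $|\alpha_j^\inc|=\BigO{1}$, the solution $u={}^t(u_1,u_2)$ satisfies the expansions \eqref{eq:exasymexu_1}--\eqref{eq:exasymexu_2} microlocally on $\Omega$. The idea is to evaluate these expansions at points of $I^\out$, after plugging in the degenerate-stationary-phase asymptotics of the oscillatory integrals $\Gamma_\pm(\mathbbm{1})(x)$, and then extract $(\alpha_1^\out,\alpha_2^\out)$ by comparing with the WKB basis \eqref{eq:normalWKB}.

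\textbf{Stationary phase for $\Gamma_\pm(\mathbbm{1})$.} The amplitude $g(y):=(R_1)_f(\mathbbm{1})(y)$ is smooth by Proposition~\ref{Prop:conjugatePDO}, with $g(0)=r_1(0,0)+\BigO{h}$, since $f(0)=0$ implies $r_{1,f}(0,0)=r_1(0,f(0))=r_1(0,0)$. By \eqref{eq:Fcond}, the phase $F$ has a degenerate critical point at $y=0$ of exact order $m+1$, while $F'=f$ does not vanish elsewhere near $0$. For $x\in I^\out$ this critical point is interior to $[x_0,x]$; cutting the integrand at distance $>\delta$ from $0$ contributes $\BigO{h^\infty}$ by the non-stationary phase method, and on the remaining neighborhood of $0$, the Taylor expansion $F(y)=\tfrac{f^{(m)}(0)}{(m+1)!}y^{m+1}+\BigO{y^{m+2}}$ followed by the rescaling $y=\bigl(h(m+1)!/|f^{(m)}(0)|\bigr)^{1/(m+1)}z$ reduces the leading term to the standard integrals $\int_0^{\pm\infty}e^{i\sgn(f^{(m)}(0))z^{m+1}}dz$. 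Their evaluation $e^{\pm i\pi\sgn(f^{(m)}(0))/(2(m+1))}\Gamma\!\left(\tfrac1{m+1}\right)\!/(m+1)$, combined with the identity $\Gamma(1/(m+1))/(m+1)=\Gamma((m+2)/(m+1))$ and the definition of $\mu_m$, gives
\[\Gamma_+(\mathbbm{1})(x)=h^{1/(m+1)}\,r_1(0,0)\,\omega+\BigO{h^{2/(m+1)}\left(\log(1/h)\right)^{\delta_{m,1}}},\]
with $\omega$ as in \eqref{Eq:omega}. The same computation applied to $\Gamma_-$ (i.e.\ with $F$ replaced by $-F$ and $R_1$ by $R_2$), together with the identity $\mu_m(-\theta)=\overline{\mu_m(\theta)}$, yields $\Gamma_-(\mathbbm{1})(x)=h^{1/(m+1)}\,r_2(0,0)\,\overline{\omega}+\BigO{h^{2/(m+1)}(\log(1/h))^{\delta_{m,1}}}$ for $x\in I^\out$.

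\textbf{Extraction of $\alpha_j^\out$ and main obstacle.} Pick $x_1,x_2\in I^\out$ and set $\rho_1^\out=(x_1,0)$, $\rho_2^\out=(x_2,f(x_2))$. Since $\tw_1^\out$ and $\tw_2^\out$ are microlocalized on the disjoint curves $\xi=0$ and $\xi=f(x)$ (with $f(x)\neq 0$ for $x>0$), the microlocal identities $u\equiv \alpha_j^\out\tw_j^\out$ near $\rho_j^\out$ together with \eqref{eq:normalWKB} give $u_1(x_1)=\alpha_1^\out+\BigO{h}$ and $e^{-iF(x_2)/h}u_2(x_2)=\alpha_2^\out+\BigO{h}$ (the $\BigO{h}$ terms are absorbed into the remainder since $2/(m+1)\le 1$ for $m\ge 1$, with the extra $\log$ factor for $m=1$). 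Substituting \eqref{eq:exasymexu_1}--\eqref{eq:exasymexu_2} with the stationary phase estimates above produces
\[\alpha_1^\out=\alpha_1^\inc-ih^{1/(m+1)}r_1(0,0)\,\omega\,\alpha_2^\inc+\BigO{h^{2/(m+1)}(\log(1/h))^{\delta_{m,1}}},\]
and the analogous formula for $\alpha_2^\out$ involving $r_2(0,0)\overline{\omega}$, which is \eqref{Eq:MicrolocalTransferMatrix}. The main technical obstacle is the careful bookkeeping of the phase factors $e^{\pm i\pi\sgn(f^{(m)}(0))/(2(m+1))}$ assembling into $\mu_m$, and the tracking of the logarithmic refinement when $m=1$, which arises from the subleading term of the degenerate stationary phase expansion. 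The quantitative remainder for the oscillatory integral is exactly what is provided by Proposition~\ref{prop:normbdd}, so the computation reduces to an explicit identification of constants.
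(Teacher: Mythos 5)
Your argument is correct and follows essentially the same route as the paper: start from the Duhamel-type expansions in Proposition~\ref{Prop:Uniqueness-NF}\,$(ii)$, evaluate $\Gamma_\pm(\mathbbm{1})$ on the outgoing region via the degenerate stationary phase expansion (which you rederive inline, whereas the paper packages it as Proposition~\ref{prop:normbdd}\,$(ii)$ and Lemma~\ref{Lemma:degst}), and read off the entries of $T$ by comparison with the outgoing WKB basis \eqref{eq:normalWKB}. The only cosmetic difference is that you carry general Cauchy data $(\alpha_1^\inc,\alpha_2^\inc)$ throughout, while the paper works with $(1,0)$ and treats the second column by symmetry; also, the logarithmic factor in the remainder actually originates from the crude estimate \eqref{Eq:degstEstimate} entering $K_j(\mathbbm{1})$, not from the subleading term of the expansion \eqref{Eq:degstExpansion}, but this does not affect the conclusion.
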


We prove those two propositions in the subsections \ref{SubS:uniexist} and \ref{SubS:ProofConnectionFormula} after a few preliminary lemmas.
By Lemma \ref{lem:Psicomp}, we may assume that for a compact neighborhood $S\subset \Rr$ of $0$,
\begin{align}\label{eq:suppcondR}
\mathrm{supp}~ E_{R_1}\cup \mathrm{supp}~E_{R_2}\subset S\times S,\quad f'(x)\neq 0\quad \text{for}\quad x\in S\setminus \{0\},
\end{align}
where $E_{R_j}$ denotes the integral kernel of $R_j$.

\subsection{Microlocal Duhamel principle of the scalar equation}\label{SubS:Duhamel}


\begin{lemma}[Microlocal Duhamel formula for the scalar equation]\label{Lem:Duhamel}
Let $u,v$ be tempered families of $L^2$ functions, $R\in \Psi_h^{\mathrm{comp}}$, and $\Omega\subset \Rr^2$ be an open neighborhood of $(0,0)$ such that its intersection with $\{\xi=f(x)\}$ is connected.
Suppose that $u$ and $v$ 
satisfy 
\begin{equation}\label{eq:Single-eq0}
(hD_x-f(x))u+hRv\equiv0\quad \text{m.l. on $\Omega$}.
\end{equation}
Then, for each $x_0\in \Rr$, there exists a constant $C\in\Cc$ such that 
\begin{equation}\label{eq:Duh0}
    u(x)\equiv e^{\frac ihF(x)}\left(C-i\int_{x_0}^xe^{-\frac{i}{h}F(y)}Rv(y)dy\right) \quad \text{m.l. on $\Omega$}.
\end{equation}
\end{lemma}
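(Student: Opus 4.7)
The plan is to reduce to the model case $f\equiv 0$ by conjugation with the multiplication operator $e^{-iF(x)/h}$, and then invoke the microlocal uniqueness for the scalar real principal-type operator $hD_x$.

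First, I would set $\tilde u:=e^{-iF(x)/h}u$. Using $F'=f$, the pointwise identity $(hD_x-f(x))(e^{iF/h}g)=e^{iF/h}\,hD_x g$ transforms the hypothesis \eqref{eq:Single-eq0} into
\begin{equation*}
    hD_x\tilde u+h\,e^{-iF(x)/h}Rv\equiv 0\quad \text{m.l. on }\tilde\Omega,
\end{equation*}
where $\tilde\Omega:=\kappa_F^{-1}(\Omega)$ with the symplectomorphism $\kappa_F(x,\xi):=(x,\xi+f(x))$. I use here that multiplication by $e^{iF/h}$ is a Fourier integral operator quantizing $\kappa_F$ (Proposition~\ref{Prop:conjugatePDO}), so the microlocal hypothesis transfers from $\Omega$ to $\tilde\Omega$ by the Egorov property. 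By construction $\tilde\Omega\cap\{\xi=0\}=\kappa_F^{-1}(\Omega\cap\{\xi=f(x)\})$ is connected.

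Next, I would introduce the particular function
\begin{equation*}
    \tilde u_0(x):=-i\int_{x_0}^{x}e^{-iF(y)/h}Rv(y)\,dy,
\end{equation*}
which is tempered: since $Rv$ is supported in a fixed compact set by \eqref{eq:suppcondR} and $R$ is $L^2$-bounded uniformly in $h$, $\tilde u_0$ is bounded on bounded intervals by a polynomial in $h^{-1}$. Multiplying $\tilde u_0$ by a smooth spatial cutoff that equals $1$ on the $x$-projection of $\tilde\Omega$ makes it a tempered $L^2$ family, without altering the microlocal statement on $\tilde\Omega$. A direct differentiation gives the pointwise identity $hD_x\tilde u_0=-h\,e^{-iF/h}Rv$, so the difference $w:=\tilde u-\tilde u_0$ satisfies
\begin{equation*}
    hD_x w\equiv 0\quad\text{m.l. on }\tilde\Omega.
\end{equation*}

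To conclude, I would apply the microlocal uniqueness for $hD_x$. Its symbol $p(x,\xi)=\xi$ is of real principal type with $\partial_\xi p=1\neq 0$, and $\tilde\Omega\cap\{p=0\}$ is a connected open interval. By Proposition~\ref{prop:scalarmicsol}, the space of microlocal solutions of $hD_xw\equiv 0$ on $\tilde\Omega$ is one-dimensional; since the Eikonal equation $\phi'=0$ is trivially solved by $\phi\equiv 0$, this space is spanned by the constant function $\mathbbm{1}$. Hence there exists $C\in\Cc$ with $w\equiv C$ m.l. on $\tilde\Omega$, and multiplying back by $e^{iF/h}$ recovers \eqref{eq:Duh0}. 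The main subtle point is this last step: it combines microlocal ellipticity of $hD_x$ off $\{\xi=0\}$ with the propagation of a single constant $C$ along the entire connected bicharacteristic segment inside $\tilde\Omega\cap\{\xi=0\}$; the connectedness hypothesis on $\Omega\cap\{\xi=f(x)\}$ is exactly what guarantees a unique global constant.
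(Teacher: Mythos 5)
Your proof is correct and follows essentially the same route as the paper's: both compare $u$ to the explicit Duhamel integral, observe that the difference solves the homogeneous scalar first-order equation microlocally on $\Omega$, and invoke the one-dimensionality of the microlocal solution space (Proposition~\ref{prop:scalarmicsol}~$(i)$, using the connectedness hypothesis) to pin down a single constant $C$. The only difference is that you conjugate by $e^{-iF/h}$ to reduce to $hD_x$ before applying uniqueness, while the paper applies uniqueness directly to $hD_x-f(x)$ by noting $e^{iF/h}$ is a homogeneous solution; this is a cosmetic reorganization of the same argument.
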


\begin{proof}

Put $w(x):=-ie^{\frac{i}{h}F(x)}\int_{x_0}^xe^{-\frac{i}{h}F(y)}Rv(y)dy$. By \eqref{eq:Single-eq0}, one has
\begin{align*}
(hD_x-f(x))(u(x)-w(x))\equiv 0\quad \text{m.l. on $\Omega$}.
\end{align*}
On the other hand, we clearly have $(hD_x-f(x))(e^{\frac{i}{h}F(x)})=0$.
By the uniqueness of the microlocal solutions (Proposition \ref{prop:scalarmicsol} $(i)$), they are linearly dependent. This proves \eqref{eq:Duh0}.

\end{proof}

\subsection{Estimates for integral operators}

The purpose of this subsection is to derive some estimates for the integral operators related to the Duhamel formula \eqref{eq:Single-eq0}. 
The following proposition corresponds to \cite[Proposition 3.1]{FMW1} or \cite[Proposition 3.8]{AFH}.

\begin{prop}\label{prop:normbdd}

Fix $i=1,2$.

\noindent$(i)$
There exists an $h$-independent constant $C>0$ such that   
\begin{align}
 \|\Gamma_{\pm}\|_{L^{\infty}\to L^{\infty}}\le C,\quad \|\Gamma_{\pm}(\mathbbm{1})\|_{L^\infty}\leq Ch^{\frac{1}{m+1}},\quad \label{Eq:Esti-Gamma} \\
\|K_{j}\|_{L^\infty\to L^\infty}\le Ch^{\frac1{m+1}},\quad \|K_{j}(\mathbbm{1})\|_{L^\infty}\leq Ch^{\frac{2}{m+1}}\logdelt\label{Eq:Esti-Kk1}
\end{align}
for $0<h\leq 1$ and $j=1,2$. 

\noindent$(ii)$ Let $I_{\inc}\Subset (-\infty,0)$ and $I_{\out}\Subset (0,\infty)$. Then,
\begin{align*}
&\|\Gamma_{\pm}(\mathbbm{1})\|_{L^{\infty}(I_{\inc})}=\BigO{h},\quad \|K_{i}(\mathbbm{1})\|_{L^{\infty}(I_{\inc})}=\BigO{h}\\ 
&\|\Gamma_{\pm}(\mathbbm{1})-\omega_{\pm} r_{\pm}(0,0)h^{\frac{1}{m+1}} \|_{L^{\infty}(I_{\out})}=\BigO{h^{\frac{2}{m+1}}}
\end{align*}
as $h\to 0$, where 
$\omega_+=\omega$, $\omega_-=\overline{\omega}$, $r_{+}=r_1$, and $r_-=r_2$. We recall that $\omega$ is given in \eqref{Eq:omega}.

\end{prop}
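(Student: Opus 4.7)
The proof hinges on a single uniform degenerate stationary phase estimate: by \eqref{eq:Fcond}, $F$ has a critical point of order $m+1$ at $y=0$, so for every $a\in W^{1,\infty}(\Rr)$ supported in $S$ and every $x\in\Rr$,
\begin{equation*}
\left|\int_{x_0}^x e^{\pm iF(y)/h} a(y)\, dy\right| \leq Ch^{\frac{1}{m+1}}\|a\|_{L^\infty} + Ch^{\frac{2}{m+1}}\logdelt \|a'\|_{L^\infty}. \qquad (\ast)
\end{equation*}
I will establish $(\ast)$ by splitting the integration range into three regions: (a) on $|y|\leq ch^{1/(m+1)}$, a trivial size bound after the scaling $y=(h(m+1)!/|f^{(m)}(0)|)^{1/(m+1)}t$ yields $Ch^{1/(m+1)}\|a\|_{L^\infty}$; (b) on $ch^{1/(m+1)}\leq |y|\leq \epsilon$, integration by parts against $F'(y)=f(y)$, which is bounded below by $c|y|^m$ via \eqref{eq:fassumption}, produces boundary and integrated errors of order $h\int_{ch^{1/(m+1)}}^\epsilon |y|^{-m}(\|a\|_{L^\infty}+\|a'\|_{L^\infty})\,dy$, generating the $Ch^{2/(m+1)}\logdelt \|a'\|_{L^\infty}$ contribution (the logarithm appearing only when $m=1$); and (c) on $|y|>\epsilon$, non-stationary phase gives $O(h^\infty)$.

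For part (i), the uniform $L^\infty$-bound on $\Gamma_\pm$ is immediate from the compact support $\mathrm{supp}\,R_jv\subset S$ of \eqref{eq:suppcondR} together with $\|R_jv\|_{L^\infty}\leq C\|v\|_{L^\infty}$ (Lemma~\ref{Lem:L-infty-bdd}). The bound $\|\Gamma_\pm(\mathbbm{1})\|_{L^\infty}\leq Ch^{1/(m+1)}$ follows by applying $(\ast)$ to $a=R_\pm\mathbbm{1}\in C_c^\infty(S)$, which is uniformly bounded in $W^{1,\infty}$ since $hD_x\mathbbm{1}=0$ and Lemma~\ref{Lem:L-infty-bdd} applies. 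For $\|K_j\|_{L^\infty\to L^\infty}\leq Ch^{1/(m+1)}$, I rewrite $K_1v(x)=-\int_{x_0}^x e^{iF/h}(R_1)_f(\Gamma_- v)(y)\,dy$; the amplitude has $L^\infty$-norm bounded by $C\|v\|_{L^\infty}$, so $(\ast)$ applies. The delicate bound $\|K_j(\mathbbm{1})\|_{L^\infty}\leq Ch^{2/(m+1)}\logdelt$ also reduces to $(\ast)$: for the amplitude $a:=(R_1)_f\,\Gamma_-\mathbbm{1}$, I combine the already-proven $\|\Gamma_-\mathbbm{1}\|_{L^\infty}=O(h^{1/(m+1)})$ with $D_y\Gamma_-\mathbbm{1}=-ie^{-iF/h}R_2\mathbbm{1}=O(1)$ in $L^\infty$, together with the $W^{1,\infty}$-boundedness of $(R_1)_f$ (Lemma~\ref{Lem:L-infty-bdd}) and the semiclassical commutator $[hD_x,(R_1)_f]=O(h)$, to obtain $\|a\|_{L^\infty}=O(h^{1/(m+1)})$ and $\|a'\|_{L^\infty}=O(1)$; plugging into $(\ast)$ gives $Ch^{1/(m+1)}\cdot O(h^{1/(m+1)}) + Ch^{2/(m+1)}\logdelt\cdot O(1) = O(h^{2/(m+1)}\logdelt)$.

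For part (ii), on $I_\inc\Subset(-\infty,0)$ the phase $F/h$ has no critical point in $[x_0,x]$, so a single integration by parts yields $\Gamma_\pm(\mathbbm{1})=O(h)$, and the nested structure of $K_i(\mathbbm{1})$ inherits the $O(h)$ bound. For $x\in I_\out\Subset(0,\infty)$, I refine the stationary phase computation to extract the leading coefficient. Using $F(y)=\frac{f^{(m)}(0)}{(m+1)!}y^{m+1}(1+O(y))$, the identity $R_\pm\mathbbm{1}(0)=r_\pm(0,0)+O(h)$, and the scaling $y=\left(h(m+1)!/|f^{(m)}(0)|\right)^{1/(m+1)}t$, the leading contribution becomes
\begin{equation*}
r_\pm(0,0)\left(\frac{h(m+1)!}{|f^{(m)}(0)|}\right)^{\!\frac{1}{m+1}}\!\int_\Rr e^{\pm i\,\sgn(f^{(m)}(0))\,t^{m+1}}\,dt + O(h^{2/(m+1)}),
\end{equation*}
and the $t$-integral, via $\int_0^\infty e^{\pm it^{m+1}}dt = \mathbf{\Gamma}\!\left(\frac{m+2}{m+1}\right)e^{\pm i\pi/(2(m+1))}$ applied on $\{t>0\}$ and $\{t<0\}$ (after the substitution $t\to -t$ which introduces the factor $(-1)^{m+1}$), reproduces exactly $r_\pm(0,0)\omega_\pm h^{1/(m+1)}$ with $\omega_\pm$ given in \eqref{Eq:omega} and $\mu_m$ as in \eqref{eq:def-s-tau-mu}. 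The main obstacle is the precise derivation of $(\ast)$ with the correct logarithmic correction in the $m=1$ case, which requires careful bookkeeping of the boundary contributions in the integration by parts at the transition scale $|y|\sim h^{1/(m+1)}$.
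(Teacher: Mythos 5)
Your proof takes essentially the same route as the paper's: reduce everything to a degenerate stationary phase estimate (the paper's Lemma~\ref{Lemma:degst} plays the role of your $(\ast)$), apply it to amplitudes of the form $\chi_J(R_1)_fv$ or $(R_1)_f\Gamma_-v$, and track $L^\infty$ and $W^{1,\infty}$ norms via Lemma~\ref{Lem:L-infty-bdd} and the explicit formula $D_x\Gamma_\pm v=\pm i e^{\pm iF/h}\chi_J R_\pm v$. Part (ii) is handled identically (non-stationary phase on $I_\inc$, the expansion form of the stationary phase lemma on $I_\out$). The one place where the write-up has a real gap is in the bound $\|K_j\|_{L^\infty\to L^\infty}\le Ch^{1/(m+1)}$: you say the amplitude $a=(R_1)_f(\Gamma_-v)$ has $\|a\|_{L^\infty}\le C\|v\|_{L^\infty}$ and that ``$(\ast)$ applies'', but $(\ast)$ produces an $h^{2/(m+1)}\log(1/h)\,\|a'\|_{L^\infty}$ term, so without a bound on $\|a'\|_{L^\infty}$ you get nothing useful. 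You clearly know how to supply this (you do exactly this kind of derivative tracking for $K_j(\mathbbm{1})$), but as written it is omitted here. The paper avoids the issue by explicitly formulating the two-sided estimate $\|\Gamma_\pm\|_{L^\infty\to W^{1,\infty}}\le C$, $\|\Gamma_\pm\|_{W^{1,\infty}\to L^\infty}\le Ch^{1/(m+1)}$, and composing; I would recommend adopting that cleaner bookkeeping. A side remark: the commutator $[hD_x,(R_1)_f]=\BigO{h}$ you invoke for the $K_j(\mathbbm{1})$ bound is correct but redundant — the uniform $W^{1,\infty}$-boundedness of $(R_1)_f$ from Lemma~\ref{Lem:L-infty-bdd} already gives $\|a'\|_{L^\infty}=\BigO{1}$ directly.
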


\begin{rmk}
In general, $\|\Gamma_{j}u\|_{L^\infty}=\BigO{h^{\frac{1}{m+1}}}$ and $\|K_{j}u\|_{L^\infty}\leq Ch^{\frac{2}{m+1}}\logdelt$ do not hold as $h\to 0$ even if $u=u_h$ is a uniformly bounded family in $L^{\infty}$ and its optimal bound of the first estimate is $\BigO{1}$. The second inequalities in \eqref{Eq:Esti-Gamma} and \eqref{Eq:Esti-Kk1} mean that these bounds can be improved if $u$ is the constant function $\mathbbm{1}$.
\end{rmk}


\begin{proof}
First, we observe that the conditions  \eqref{eq:Fcond} and \eqref{eq:suppcondR} allow us to use the degenerate stationary phase theorem (Lemma \ref{Lemma:degst}) in the following proof.

\noindent$(i)$
First, we claim that there exists $C>0$ such that 
\begin{align}\label{eq:Gammabound}
\|\Gamma_{\pm}\|_{L^{\infty}\to W^{1,\infty}}\leq C,\quad \left\|\Gamma_{\pm}\right\|_{W^{1,\infty}\to L^\infty}
    \le Ch^{\frac 1{m+1}}
\end{align}
for $0<h\leq 1$, where $\|u\|_{W^{1,\infty}}=\|u\|_{L^{\infty}}+\|D_xu\|_{L^{\infty}}$. We prove it for $\Gamma_+$ and the proof for $\Gamma_-$ is similar.
By the definition of $\Psi_h^{\mathrm{comp}}$, there exists a compact interval $J\subset \Rr$ (independent of $0<h\leq 1$) such that $(R_1)_f=\chi_J(R_1)_f$, where $\chi_J$ is the characteristic function of the subset $J$. We also note that $(R_1)_f\in \Psi_h^{\mathrm{comp}}$ by Proposition \ref{Prop:conjugatePDO} and $(R_1)_f=\chi_J(R_1)_f$.
By H\"older's inequality and the definition of $\Gamma_+$, 
\begin{align*}
\|\Gamma_{+}u\|_{L^{\infty}}\leq& \|\chi_J\|_{L^1}\|(R_1)_f\|_{L^{\infty}\to L^{\infty}}\|u\|_{L^{\infty}}\leq C\|u\|_{L^{\infty}},
\end{align*}
where we use Lemma~\ref{Lem:L-infty-bdd} for the uniform boundedness of $(R_1)_f$ on $L^{\infty}$. Moreover, 
\begin{align*}
D_x\circ \Gamma_{+}u(x)=-ie^{\frac ihF(x)}\chi_J(x)(R_1)_fu(x),
\end{align*}
which implies $\|D_x\circ \Gamma_{+}\|_{L^{\infty}\to L^{\infty}}\leq \|\chi_J\|_{L^{\infty}}\|(R_1)_f\|_{L^{\infty}\to L^{\infty}}\leq C$. These prove the first inequality in \eqref{eq:Gammabound}. The second inequality in \eqref{eq:Gammabound} follows 
from the degenerate stationary phase estimate \eqref{Eq:degstEstimate}:
\begin{align*}
\left|\int_{x_0}^xe^{\frac{i}{h}F(x)}\chi_J(y)(R_1)_fu(y)dy \right|\leq Ch^{\frac{1}{m+1}}\|\chi_J(R_1)_fu\|_{W^{1,\infty}}\leq C'\|u\|_{W^{1,\infty}},
\end{align*}
where we use the trivial bound $\sup_{0<h\leq 1}h^{\frac{1}{m+1}}\log(1/h)<\infty$ and the uniform boundedness of $(R_1)_f=\chi_J(R_1)_f$ between $W^{1,\infty}$ stated in Lemma \ref{Lem:L-infty-bdd}.

The first estimate in \eqref{Eq:Esti-Gamma} directly follows from \eqref{eq:Gammabound}.
Using \eqref{eq:Gammabound} again, we obtain
\begin{align*}
    \|K_{1}\|_{L^{\infty}\to L^{\infty}}\leq \|\Gamma_{1}\|_{W^{1,\infty}\to L^\infty} \|\Gamma_{2}\|_{L^{\infty}\to W^{1,\infty}}\leq C'h^{\frac{1}{m+1}},
\end{align*}
which shows the first bound in \eqref{Eq:Esti-Kk1} regrading $K_{1}$. The proof for $K_{2}$ is similar.

Next, we deal with the second inequality in \eqref{Eq:Esti-Gamma}.
We note 
\begin{align}\label{Eq:Gammapexp}
\Gamma_{+}(\mathbbm{1})(x)=\int_{x_0}^xe^{+\frac{i}{h}F(y)}a_h(y)dy,
\end{align}
where $a_h(x):=\chi_J(x)(R_1)_f(\mathbbm{1})(x)$. Clearly, for each $\alpha\in\mathbb{N}$,  $|\partial_x^{\alpha}a_h(x)|$ are uniformly bounded with respect to $0<h\leq 1$ and $x\in \Rr$ by \eqref{eq:PseuDO-WKB}. Then, the inequality $\|\Gamma_{+}(\mathbbm{1})\|_{L^\infty}\leq Ch^{\frac{1}{m+1}}$ 
follows from the degenerate stationary phase estimate \eqref{Eq:degstEstimate}.
The proof for $\Gamma_-$ is similar.

Finally, we prove the second estimate in \eqref{Eq:Esti-Kk1}. We deal with $K_{1}$ only. Set $g(x)=(R_1)_f\Gamma_{-}(\mathbbm{1})(x)$. Since $(R_1)_f\in \Psi_h^{\mathrm{comp}}$, the function $w$ is supported in a compact subset which is independent of $0<h\leq 1$.
By the degenerate stationary phase estimate \eqref{Eq:degstEstimate}, we have
\begin{align}\label{Eq:K1Bound}
|K_{1}(\mathbbm{1})(x)|=\left|\int_{x_0}^xe^{\frac{i}{h}F(y)}g(y)  dy\right|\leq Ch^{\frac{1}{m+1}}\|g\|_{L^{\infty}}+Ch^{\frac{2}{m+1}}\logdelt~\|D_xg\|_{L^{\infty}},
\end{align}
where $C>0$ is independent of $x\in \Rr$ and $0<h\leq 1$. 
On the other hand, $\|g\|_{L^{\infty}}=\BigO{h^{\frac{1}{m+1}}}$ and $\|D_xg\|_{L^{\infty}}=\BigO{1}$ due to \eqref{Eq:Esti-Gamma}, \eqref{eq:Gammabound}, and Lemma \ref{Lem:L-infty-bdd}. This completes the proof.

\noindent$(ii)$ 
The derivative of the phase $F$ of $e^{\frac{i}{h}F(x)}$ does not vanish on $S$ appearing in \eqref{eq:suppcondR}, so 
an
integration by parts and the $W^{1,\infty}$-boundedness of $(R_1)_f$ and $R_2$ (Lemma \ref{Lem:L-infty-bdd}) give $\|\Gamma_{\pm}(\mathbbm{1})\|_{L^{\infty}(I_{\inc})}=\BigO{h}$ as well as $\|K_{1}(1)\|_{L^\infty(I_\inc)}=\BigO{h}$.

The last estimate comes from the degenerate stationary expansion \eqref{Eq:degstExpansion} 
due to the conditions \eqref{eq:Fcond} and \eqref{eq:suppcondR}.

\end{proof}

We need one more technical lemma.

\begin{lemma}\label{lem:inczero}
Let $I_{\inc}\Subset (-\infty,0)$ and $i=1,2$.
If $r\in L^{\infty}(\Rr)$ satisfies $\|r\|_{L^{\infty}(I_{\inc})}=\BigO{h^{\infty}}$ for all $I_{\inc}\Subset (-\infty,0)$, then 
\begin{align*}
\|\Gamma_{\pm}r\|_{L^{\infty}(I_{\inc})}=\BigO{h^{\infty}},\quad \|K_{i}r\|_{L^{\infty}(I_{\inc})}=\BigO{h^{\infty}}.
\end{align*}

\end{lemma}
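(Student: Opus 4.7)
The plan is to exploit the pseudolocal property of the operators $(R_1)_f$ and $R_2$ in $\Psi_h^{\mathrm{comp}}$, together with the fact that, for $x \in I_\inc \Subset (-\infty,0)$, the integration range appearing in the definition of $\Gamma_\pm$ is contained in some fixed compact interval $J \Subset (-\infty,0)$. Writing $R_+ := (R_1)_f$ and $R_- := R_2$, one has the trivial bound
\begin{equation*}
\|\Gamma_\pm r\|_{L^\infty(I_\inc)} \le |J| \cdot \|R_\pm r\|_{L^\infty(J)},
\end{equation*}
which reduces the first assertion to showing that $\|R_\pm r\|_{L^\infty(J)} = \BigO{h^\infty}$.

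To prove this, fix a cutoff $\chi \in C_c^\infty((-\infty,0))$ that equals $1$ on an open neighborhood of $J$, and decompose $r = \chi r + (1-\chi)r$. Since $\chi r$ is supported in a compact subset of $(-\infty,0)$, the hypothesis yields $\|\chi r\|_{L^\infty} = \BigO{h^\infty}$, and the uniform $L^\infty \to L^\infty$-boundedness of $R_\pm$ (Lemma~\ref{Lem:L-infty-bdd}) then gives $\|R_\pm(\chi r)\|_{L^\infty} = \BigO{h^\infty}$. For the other term, $\operatorname{supp}(1-\chi)$ lies at positive distance $\delta > 0$ from $J$; the standard pseudolocal bound $|K_{R_\pm}(y,z)| \le C_N h^N (1 + |y-z|/h)^{-N}$ for the Schwartz kernel (obtained by $N$-fold integration by parts in $\xi$ in the oscillatory integral representation), combined with the compact support of $K_{R_\pm}$ in $z$ and $r \in L^\infty(\Rr)$, gives $\|R_\pm((1-\chi)r)\|_{L^\infty(J)} = \BigO{h^\infty}$.

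The bounds for $K_i$ follow by iteration. Writing $K_i = -\Gamma_\pm \circ \Gamma_\mp$ with the appropriate choice of signs and setting $\tilde r := \Gamma_\mp r$, the first part of the proof (applied with $I_\inc$ arbitrary) shows that $\tilde r \in L^\infty(\Rr)$ by Proposition~\ref{prop:normbdd}$(i)$ and that $\|\tilde r\|_{L^\infty(I)} = \BigO{h^\infty}$ for every $I \Subset (-\infty,0)$. Thus the argument applies to $\tilde r$ as well, yielding $\|K_i r\|_{L^\infty(I_\inc)} = \|\Gamma_\pm \tilde r\|_{L^\infty(I_\inc)} = \BigO{h^\infty}$.

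The only technical point is the off-diagonal decay of the kernel $K_{R_\pm}$ used in the second paragraph; this is a classical consequence of the oscillatory integral representation of pseudodifferential operators with compactly supported integral kernel, and presents no serious obstacle. Everything else is a clean support-and-pseudolocality decomposition followed by a one-step iteration.
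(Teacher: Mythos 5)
Your proof is correct and takes essentially the same approach as the paper: reduce the estimate on $\Gamma_\pm r$ over $I_\inc$ to an estimate on $R_\pm r$ over a compact interval $\Subset(-\infty,0)$ (the paper uses $\operatorname{Conv}(I_\inc,\{x_0\})$), invoke the pseudolocality of $(R_1)_f$ and $R_2$, and iterate for $K_i$. One small slip: the off-diagonal kernel bound should read $|K_{R_\pm}(y,z)|\le C_N h^{-1}(1+|y-z|/h)^{-N}$ rather than $C_N h^N(1+|y-z|/h)^{-N}$, but this does not affect the conclusion since either form gives $\BigO{h^\infty}$ at fixed positive distance from the diagonal.
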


\begin{proof}

We only consider $\Gamma_{+}$ and $K_{1}$. We write $\widetilde{I_\inc} := \mathrm{Conv}(I_\inc, \{x_0\})$, where $\mathrm{Conv}(A,B)$ denotes the convex hull of the sets $A,B$. Since $x_0<0$, $\|r\|_{L^{\infty}(\widetilde{I_\inc})}=\BigO{h^{\infty}}$. Moreover, 
$\|\Gamma_{+}r\|_{L^{\infty}(I_{\inc})} \leq |\widetilde{I_\inc}| \|(R_1)_fr\|_{L^{\infty}(\widetilde{I_\inc})}$ 
and 
$\|K_{1}r\|_{L^{\infty}(I_{\inc})} \leq |\widetilde{I_\inc}| \|(R_1)_f \Gamma_{-}r\|_{L^{\infty}(\widetilde{I_\inc})}$. 
Since $(R_1)_f$ and $R_2$ are pseudolocal, the estimate follows.
\end{proof}

\subsection{Existence of basis of the microlocal solution space}

In this subsection, we construct a basis of microlocal solutions near the crossing point 
via
Neumann series.

\begin{lemma}\label{lem:basisexist}
There exist tempered families $v,\tilde{v} \in L^{\infty}(\Rr;\mathbb{C}^2)$ such that the following holds as $h\to +0$: Let $\Omega\Subset \Rr^2$ be an open neighborhood of $(0,0)$ and $I_{\inc}\Subset (-\infty,0)$.

\noindent$(i)$ $\|v\|_{L^{\infty}(\Rr;\mathbb{C}^2)},\|\tilde{v}\|_{L^{\infty}(\Rr;\mathbb{C}^2)}=\BigO{1}$;

\noindent$(ii)$ The functions $v,\tilde{v}$ are exact solutions: $Pv= 0$ and $P\tilde{v}= 0$;

\noindent$(iii)$ There exists an $h$-dependent matrix $A=(A_{ij})\in M_2(\mathbb{C})$ such that $A=I+\BigO{h}$ and
\begin{align}\label{eq:uptoOhsol}
v\equiv A_{11}\tw_1^{\inc} +A_{12}\tw_2^{\inc},\quad \tilde{v}\equiv A_{21}\tw_1^{\inc} +A_{22}\tw_2^{\inc}\quad \text{m.l. on $\Omega\cap (I_{\inc}\times \Rr)$};
\end{align}

\noindent$(iv)$ We have
\begin{align*}
v(x)=&\begin{pmatrix}
1\\
-ie^{\frac{i}{h}F(x)}\Gamma_{-,R_2}(\mathbbm{1})(x) )
\end{pmatrix}+\BigOO{h^{\frac{2}{m+1}}\logdelt}{L^{\infty}(\Rr;\mathbb{C}^2)},\\
\tilde{v}(x)=&\begin{pmatrix}
-i\Gamma_{+,(R_1)_f}(\mathbbm{1})(x) \\
e^{\frac{i}{h}F(x)}
\end{pmatrix}+\BigOO{h^{\frac{2}{m+1}}\logdelt}{L^{\infty}(\Rr;\mathbb{C}^2)}.
\end{align*}
\end{lemma}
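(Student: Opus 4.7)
The approach is to construct $v$ and $\tilde{v}$ as exact solutions via a Neumann series derived from the Duhamel formula. First, for any exact $w = {}^t(w_1, w_2)$ with $Pw = 0$, applying the scalar Duhamel formula of Lemma~\ref{Lem:Duhamel} to each row yields
\[
    w_1 = C_1 - i\Gamma_{0, R_1}(w_2), \qquad w_2 = e^{\frac{i}{h}F}\bigl(C_2 - i\Gamma_-(w_1)\bigr),
\]
for constants $C_1, C_2 \in \Cc$, where $\Gamma_{0, R_1}g(x) := \int_{x_0}^x R_1 g(y)\,dy$. Substituting the second into the first and using the exact operator identity $R_1(e^{\frac{i}{h}F}g) = e^{\frac{i}{h}F}(R_1)_f g$ (which turns $\Gamma_{0, R_1}(e^{\frac{i}{h}F}\,\cdot\,)$ into $\Gamma_+$) produces the fixed-point equation $(I - K_1)w_1 = C_1 - iC_2\Gamma_+(\mathbbm{1})$. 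Since $\|K_1\|_{L^\infty \to L^\infty} = \BigO{h^{1/(m+1)}}$ by Proposition~\ref{prop:normbdd}, the operator $I - K_1$ is invertible on $L^\infty$ for small $h$ via the convergent Neumann series $\sum_{k \geq 0} K_1^k$. Taking $(C_1, C_2) = (1, 0)$ will define $v$ and $(0, 1)$ will define $\tilde{v}$; reversing the substitution confirms that $Pv = P\tilde{v} = 0$ exactly, giving (ii).

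Next, I would obtain the asymptotic formulas (iv) (and hence the uniform bound (i)) by truncating the Neumann series at first order. Proposition~\ref{prop:normbdd} gives $\|K_1(\mathbbm{1})\|_{L^\infty} = \BigO{h^{2/(m+1)}\logdelt}$, and iterating $\|K_1\|_{L^\infty \to L^\infty} = \BigO{h^{1/(m+1)}}$ shows that the tail $\sum_{k \geq 2} K_1^k(\mathbbm{1})$ is smaller in $L^\infty$. Summing yields $v_1 = \mathbbm{1} + \BigO{h^{2/(m+1)}\logdelt}$, and then applying $-ie^{\frac{i}{h}F}\Gamma_-$ together with $\|\Gamma_-\|_{L^\infty \to L^\infty} = \BigO{1}$ gives the formula for $v_2$. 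The analogous computation for $\tilde{v}$ uses the same Neumann series and produces its expansion.

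For (iii), I would invoke Proposition~\ref{prop:normbdd}(ii), which refines the estimates on $I_\inc \Subset (-\infty, 0)$: since $F' = f$ has no zeros on $I_\inc$, non-stationary phase gives $\Gamma_\pm(\mathbbm{1}) = \BigO{h}$ and $K_j(\mathbbm{1}) = \BigO{h}$ in $L^\infty(I_\inc)$. Combined with (iv), this yields $v = {}^t(1, 0) + \BigO{h}$ and $\tilde{v} = {}^t(0, e^{\frac{i}{h}F}) + \BigO{h}$ in $L^\infty(I_\inc; \Cc^2)$. Since $v$ and $\tilde{v}$ are microlocal solutions of $Pu = 0$, Proposition~\ref{Prop:matrixmicsol}(i) produces coefficients $A_{ij} \in \Cc$ realizing the decomposition in \eqref{eq:uptoOhsol}. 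The main obstacle is to show $A = I + \BigO{h}$, i.e., to convert $L^\infty$-closeness on $I_\inc$ into $\BigO{h}$ control of the microlocal coefficients. The plan is to test with a cutoff $\chi \in C_c^\infty(T^*\Rr)$ localized near a chosen $\rho_1^\inc \in \gamma_1^\inc \cap (I_\inc \times \Rr)$ and supported in a cone disjoint from $\gamma_2^\inc$: by Example~\ref{ex:WF} we have $\chi^w(x,hD_x)\tw_2^\inc = \BigO{h^\infty}$ in $L^2$, while $\chi^w \tw_1^\inc$ has $L^2$-norm bounded below; applying $\chi^w$ to $v \equiv A_{11}\tw_1^\inc + A_{12}\tw_2^\inc$ together with $v - \tw_1^\inc = \BigO{h}$ in $L^\infty(I_\inc)$ will force $A_{11} = 1 + \BigO{h}$ and $A_{12} = \BigO{h}$. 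The symmetric argument near a point $\rho_2^\inc \in \gamma_2^\inc$ with a cutoff supported away from $\gamma_1^\inc$ then yields the second row of $A$.
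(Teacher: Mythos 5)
Your construction is the same as the paper's: define $v_1=\sum_{\ell\ge0}K_1^\ell(\mathbbm{1})$ and $v_2=-ie^{iF/h}\Gamma_-(v_1)$ (and the analogous formulas for $\tilde v$) via the convergent Neumann series, read off (i), (ii), (iv) from Proposition~\ref{prop:normbdd}, and for (iii) compare $v,\tilde v$ with $\tw_1^\inc,\tw_2^\inc$ in $L^\infty(I_\inc)$ using the refined estimates of Proposition~\ref{prop:normbdd}~(ii). The paper's proof of (iii) ends abruptly with ``Therefore, $A=I+\BigO{h}$ follows''; your cutoff argument is a reasonable way to make that implication explicit, with one small slip: a cutoff $\chi^w$ localized near $\gamma_1^\inc$ and away from $\gamma_2^\inc$ kills the $\tw_2^\inc$ contribution and therefore only yields $A_{11}=1+\BigO{h}$ — to obtain $A_{12}=\BigO{h}$ you must apply the same test to $v$ with a second cutoff localized near $\gamma_2^\inc$ (and away from $\gamma_1^\inc$), not merely repeat the argument for $\tilde v$.
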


\begin{proof}
We observe that $\|K_
j\|_{L^{\infty}}=\BigO{h^{\frac{1}{m+1}}}$ by Proposition \ref{prop:normbdd} and hence the Neumann series $(I-K_j)^{-1}=\sum_{\ell=0}^{\infty}K_j^{\ell}$ converges in the operator topology in $L^{\infty}(\Rr)$ for sufficiently small $h>0$.

We observe that the exact equation $Pu=0$ for $u={}^t(u_1,u_2)$ can be written as in \eqref{eq:intromodelDuh2} (with the notation \eqref{eq:KRdef}).
Let us define $v={}^t(v_1,v_2)$ and $\tilde{v}={}^t(\tilde{v}_1,\tilde{v}_2)$ as \eqref{eq:intromodelDuh2} with $(\alpha_1^{\inc},\alpha_2^{\inc})=(1,0)$ and $(\alpha_1^{\inc},\alpha_2^{\inc})=(0,1)$ respectively:
\begin{align*}
v_1:=&\sum_{\ell=0}^{\infty}K_1^{\ell}(\mathbbm{1}),\,\, v_{2}:=-ie^{\frac{i}{h}F(x)}\Gamma_{-}(v_1),\,\,
\tilde{v}_2:=e^{\frac{i}{h}F(x)}\sum_{\ell=0}^{\infty}K_2^{\ell}(\mathbbm{1}),\,\, \tilde{v}_{1}:=-i\Gamma_{+}(\tilde{v}_2).
\end{align*}
Then it is easy to check the property $(i)$ 
because
$\|K_
j\|_{L^{\infty}}=\BigO{h^{\frac{1}{m+1}}}$. The property $(ii)$ is clearly satisfied by the construction of $v$ and $\tilde{v}$.

We observe that $\|\sum_{\ell=1}^{\infty}K_j^{\ell}(\mathbbm{1})\|_{L^{\infty}}=\BigO{h^{\frac2{m+1}}\logdelt}$ holds for $j=1,2$ by Proposition \ref{prop:normbdd}. Then, the property $(iv)$ can be deduced from the definition of $\tilde{h}$ and $v,\tilde{v}$.

Finally, we prove $(iii)$.
By the uniqueness result of the microlocal solution away from the crossing point (Proposition \ref{Prop:matrixmicsol} $(i)$), we can find a ($h$-dependent) matrix $A=(A_{ij})\in M_2(\mathbb{C})$ such that \eqref{eq:uptoOhsol} holds. Thus, we only need to prove $A=I+O(h)$ as $h\to 0$. 
We observe that 
\begin{align*}
\|\Gamma_{\pm}(\mathbbm{1})\|_{L^{\infty}(I_{\inc})}=\BigO{h},\quad  \|K_j^\ell(\mathbbm{1})\|_{L^{\infty}(I_{\inc})}=\BigO{h^j}
\end{align*}
for $j=1,2$ and $\ell\geq 1$ by Proposition \ref{prop:normbdd} $(ii)$. Using these estimates and the identity \eqref{eq:normalWKB}, we obtain
\begin{align*}
v(x)&=\begin{pmatrix}
1\\
0
\end{pmatrix}+\BigOO{h}{L^{\infty}(I_{\inc};\mathbb{C}^2)}=\tw_1^{\inc}(x)+\BigOO{h}{L^{\infty}(I_{\inc};\mathbb{C}^2)},\\
\tilde{v}(x)&=e^{\frac{i}{h}F(x)}\begin{pmatrix}
0\\
1
\end{pmatrix}+\BigOO{h}{L^{\infty}(I_{\inc};\mathbb{C}^2)}=\tw_2^{\inc}(x)+\BigOO{h}{L^{\infty}(I_{\inc};\mathbb{C}^2)}.
\end{align*}
uniformly in $x\in I_{\inc}$.
Therefore, the identity $A=I+O(h)$ follows.

\end{proof}

\subsection{Proof of the existence and the uniqueness of the microlocal solution}\label{SubS:uniexist}

\begin{proof}[Proof of Proposition \ref{Prop:Uniqueness-NF}]

\noindent$(i)$
Let $u$ be a tempered family satisfying \eqref{eq:modelCauchy} with $\alpha_j^{\inc}=0$ for $j=1,2$.
By the uniqueness result away from the crossing point (Proposition \ref{Prop:matrixmicsol} $(i)$), it suffices to prove $u\equiv 0$ m.l. near $(0,0)$. 
Taking $\Omega$ sufficiently small and 
cutting $u$ off
appropriately in the phase space, we may assume\footnote{\eqref{eq:modelCauchy} just implies $(\mathrm{WF}_h(u_1)\cup\mathrm{WF}_h(u_2))\cap \Omega\subset \Lambda_1\cup \Lambda_2$.}
\begin{align}\label{eq:unimayassume}
\text{$u$ is compactly microlocalized and $\mathrm{WF}_h(u_1)\cup\mathrm{WF}_h(u_2)\subset \Lambda_1\cup \Lambda_2$.}
\end{align}

First, we claim 
\begin{align}\label{eq:uniqueinc0}
\|u_j\|_{L^{\infty}(I)}=\BigO{h^{\infty}}\quad \text{for all $I\Subset (-\infty,0)$ and $j=1,2$}. 
\end{align}
Using $\alpha_j^{\inc}=0$ for $j=1,2$, we have $\mathrm{WF}_h(u_j)\cap \left((-\infty,0)\times \Rr\right)=\emptyset$ by Proposition \ref{Prop:matrixmicsol} and \eqref{eq:unimayassume}.
Since $u={}^t(u_1,u_2)$ is compactly microlocalized,  we obtain \eqref{eq:uniqueinc0}.

Next, we show
\begin{align}\label{eq:uniiteid}
u_1(x)\equiv K_{1}^Nu_1(x) \quad \text{m.l. on $\Omega$}
\end{align}
for all $N\in\mathbb{N}$. By Lemma \ref{Lem:Duhamel}, there exists $C_{2,1}\in\mathbb{C}$ such that
\begin{align*}
u_2(x)\equiv e^{\frac{i}{h}F(x)}\left(C_{2,1}-i\Gamma_{-,R_2}u_1(x) \right)\quad \text{m.l. on $\Omega$}.
\end{align*}
By Lemma \ref{lem:inczero} and \eqref{eq:uniqueinc0}, $\|u_2\|_{L^{\infty}(I)},\|\Gamma_{-}u_1\|_{L^{\infty}(I)}=\BigO{h^{\infty}}$ for all $I\Subset (-\infty,0)$.
These estimates imply $C_{2,1}=O(h^{\infty})$ and hence
\begin{align*}
u_2(x)\equiv -ie^{\frac{i}{h}F(x)}\Gamma_{-}u_1(x)\quad \text{m.l. on $\Omega$}.
\end{align*}
Substituting this identity to the second equation in \eqref{eq:modelCauchy}, we have
\begin{align*}
hD_xu_1-ihR_1\left(e^{\frac{i}{h}F(x)}\Gamma_{-}u_1(x) \right)\equiv 0\quad \text{m.l. on $\Omega$}.
\end{align*}
By using Lemma \ref{Lem:Duhamel} (with $f=0$) again, we find $C_{1,1}\in\mathbb{C}$ satisfying
\begin{align*}
u_1(x)\equiv C_{1,1}+K_{1}u_1 \quad \text{m.l. on $\Omega$},
\end{align*}
where we use $R_1e^{\frac{i}{h}F(x)}=e^{\frac{i}{h}F(x)}(R_1)_f$.
A similar argument shows $C_{1,1}=O(h^{\infty})$ and hence
\begin{align*}
u_1(x)\equiv K_{1}u_1(x) \quad \text{m.l. on $\Omega$}.
\end{align*}
This proves \eqref{eq:uniiteid} with $N=1$.
Continuing this procedure, we obtain\footnote{The identity $v_1\equiv v_2$ m.l. on $\Omega$ does not imply $K_{1}v_1\equiv K_{1}v_2$ m.l. on $\Omega$ in general, since $K_{1}$ is not pseudolocal. Therefore, we need to repeat the procedure used in the proof for $N=1$.} \eqref{eq:uniiteid} for all $N\in\mathbb{N}$.

Since $u$ is tempered, $\|u_j\|_{L^{\infty}}=\BigO{h^{-M}}$ as $h\to 0$ for some $M>0$.
By Lemma \ref{prop:normbdd}, we have $ \|K_{1}^Nu_1\|_{L^{\infty}}=\BigO{h^{\frac{N}{m+1}-M}}$ for all $N\in\mathbb{N}$ and hence
\begin{align*}
u_1\equiv \BigOO{h^{\frac{N}{m+1}-M}}{L^{\infty}} \quad \text{m.l. on $\Omega$}
\end{align*}
by \eqref{eq:uniiteid}. This shows $u_1\equiv 0$ m.l. on $\Omega$. The proof for $u_2\equiv 0$ m.l. on $\Omega $ is similar.

\vspace{3mm}

\noindent$(ii)$ 
We use the symbols $v,\tilde{v},A$ as in Lemma \ref{lem:basisexist}. Since $A=I+\BigO{h}$ as $h\to 0$, $A$ is invertible when $h$ is sufficiently small. Then $B:=A^{-1}=(B_{ij})$ that exists and $B=I+\BigO{h}$ holds for such $h>0$. By the definition of $B$ and Lemma \ref{lem:basisexist} $(iii)$, we have
\begin{align*}
\tw_1^{\inc}\equiv B_{11}v+B_{12}\tilde{v},\quad \tw_2^{\inc}\equiv B_{21}v+B_{22}\tilde{v}\quad \text{m.l. on $\Omega\cap( I_{\inc}\cap \Rr$)}.
\end{align*}

Now we define
\begin{align*}
u:=\alpha_1^{\inc}(B_{11}v+B_{12}\tilde{v})+\alpha_2^{\inc}(B_{21}v+B_{22}\tilde{v}).
\end{align*}
It is easy to see $u\equiv \alpha_1^{\inc}\tw_1^{\inc}+\alpha_2^{\inc}\tw_2^{\inc}$ m.l. on $\Omega \cap (I_{\inc}\times \Rr)$ and hence
\begin{align*}
u\equiv \alpha_{j}^{\inc}\tw_j^{\inc}\quad \text{m.l. on $\rho_j^{\inc}$}
\end{align*}
since $\tw_1^{\inc}\equiv 0$ and $\tw_2^{\inc}\equiv 0$ m.l. on $\rho_2^{\inc}$ and $\rho_1^{\inc}$ respectively due to Proposition \ref{Prop:matrixmicsol} $(ii)$.
Since $Pv = 0$ and $P\tilde{v} = 0$,
we have $Pu\equiv 0$ m.l. on $\Omega$. 
The asymptotic expansions \eqref{eq:exasymexu_1} and \eqref{eq:exasymexu_2} directly follows from Lemma \ref{lem:basisexist} $(iv)$.
This completes the proof.   
\end{proof}

\subsection{Proof of the connection formula}\label{SubS:ProofConnectionFormula}

\begin{proof}[Proof of Proposition \ref{Prop:AsymptoticsOfTransferMatrix}]

We write $T=(t_{ij})_{(i,j)}$. We calculate the asymptotic formula of the coefficients $t_{11}$ and $t_{21}$. The 
computation
of $t_{12},t_{22}$ is similar \footnote{To do this, one has to calculate the symbol of $(R_1)_f$ at $(0,0)$. Proposition \ref{Prop:conjugatePDO} shows that the symbol of $(R_1)_f$ is $r_1(x,x+f(x))$ modulo $O(h)$ term. Since $f(0)=0$, its value at $(0,0)$ is $r_1(0,0)+O(h)$. Then a similar calculation to the one below leads to the asymptotic expansion of $t_{12},t_{22}$.}. By Proposition \ref{Prop:Uniqueness-NF} $(ii)$, we can find a microlocal solution $u={}^t(u_1,u_2)$ of $Pu\equiv 0$ near $(0,0)$ such that for $\dir\in \{\inc,\out\}$ and $j=1,2$, 
\begin{align}\label{Eq:connforass}
u\equiv \alpha_j^{\dir}\tw_j^{\dir}\quad \text{m.l. on $\gamma_j^{\dir}\cap \Omega$}\quad\quad \text{with}\quad (\alpha_1^{\inc},\alpha_2^{\inc})=(1,0).
\end{align}
and $\|u_j\|_{L^{\infty}}=\BigO{1}$ as $h\to 0$ for $j=1,2$. 
By the definition of the transfer matrix $T$, we have $\alpha_1^{\out}=t_{11}$ and $\alpha_2^{\out}=t_{21}$. Thus, our task is to calculate the asymptotic formula for $\alpha_j^{\out}$ for $j=1,2$.

By Proposition \ref{Prop:Uniqueness-NF} $(ii)$ with $(\alpha_1^{\inc},\alpha_2^{\inc})=(1,0)$, we have
\begin{align*}
\begin{pmatrix}
u_1(x)\\
u_2(x)
\end{pmatrix}\equiv \begin{pmatrix}
1\\
-ie^{\frac{i}{h}F(x)}\Gamma_{-}(1)(x)
\end{pmatrix}+\BigOO{h^{\frac{2}{m+1}}\logdelt }{L^{\infty}}
\end{align*}
m.l. on $\Omega$. Combining this with Proposition \ref{prop:normbdd} $(ii)$, we see that for each all $I_{\out}\Subset (0,\infty)$,
\begin{align*}
\begin{pmatrix}
u_1(x)\\
u_2(x)
\end{pmatrix}\equiv& \begin{pmatrix}
1\\
-i\omega_{-} r_2(0,0)h^{\frac{1}{m+1}}e^{\frac{i}{h}F(x)}
\end{pmatrix}+\BigOO{h^{\frac{2}{m+1}}\logdelt }{L^{\infty}}\\
\equiv&f_1^{\out}(x)-i\overline{\omega} r_2(0,0)h^{\frac{1}{m+1}}f_2^{\out}(x)+\BigOO{h^{\frac{2}{m+1}}\logdelt}{L^{\infty}}
\end{align*}
m.l. on $\Omega\cap (I_{\out}\times \Rr)$, where we use $\tw_1^{\out}={}^t(1,0)+\BigOO{h}{L^{\infty}}$ and $\tw_2^{\out}={}^t(0,e^{\frac{i}{h}F(x)})+\BigOO{h }{L^{\infty}}$.
Consequently, we obtain $t_{11}=\alpha_1^{\out}=1+\BigOO{h^{\frac{2}{m+1}}\logdelt }{L^{\infty}} $ and 
$t_{21}=\alpha_2^{\out}-i\overline{\omega}  r_2(0,0)h^{\frac{1}{m+1}}+\BigOO{h^{\frac{2}{m+1}}\logdelt }{L^{\infty}} $ from \eqref{Eq:connforass}. This completes the proof.

\end{proof}

\section{Correspondence of transfer matrices}\label{S:Correspondence}
According to Theorem~\ref{Thm:Matrix-NF}, there exist an $L^2$-bounded operator $\mathcal{F}$ and an elliptic pseudodifferential operator $A$ such that
\begin{equation*}
    \mathcal{F}^{-1}\mathscr{P}\mathcal{F}\equiv\operatorname{diag}(1\ A)P_{\rm nf},\quad
    P_{\rm nf}=\begin{pmatrix}
        hD_x&hR_1\\hR_2&hD_x-f(x)
    \end{pmatrix},
\end{equation*}
microlocally near $((0,0),(0,0))$.
Let $T=T(w_1^\inc,w_2^\inc,w_1^\out,w_2^\out)$ be the transfer matrix for the original operator $\mathscr{P}$ and let $T_{\rm nf}=T_{\rm nf}( \tw_1^\inc,\tw_2^\inc,\tw_1^\out,\tw_2^\out)$ be that of the 
reduced operator $P_{\rm nf}$.
Here, $w_j^\dir$ denotes the WKB solutions for $\mathscr{P}$ given in Proposition~\ref{Prop:matrixmicsol}, and $\tw_j^\dir$ denotes that of $P_{\rm nf}$ in \eqref{eq:WKB-basis-nf} 
constructed in Proposition~\ref{prop:modelWKBconst}.
Note that the functions $\tw_2^\inc$, $\tw_2^\out$ are not normalized in the sense of Definition~\ref{def:normalized} for a notational simplicity.
One has the following relation between these matrices. 

\begin{prop}\label{prop:T-and-Tnf}
    Assume \eqref{eq:NoCaustic}.
    Then one has
    \begin{equation}
        T=I+
        \begin{pmatrix}
            0&\left|c/c'\right|^{1/2}t_{12}\\
            (\operatorname{sgn}c)\left|c/c'\right|^{-1/2}t_{21}&0
        \end{pmatrix}
        +\BigO{h^{\frac2{m+1}}\logdelt},
    \end{equation}
    where $t_{jk}$ stands for the $(j,k)$-entry of $T_{\rm nf}$, $c=(\partial_\xi(\kappa^*p_2)(0,0))^{-1}$ appearing in Theorem~\ref{Thm:Matrix-NF}, and
    \begin{equation}\label{eq:def-c'}
        c'=\frac{\left|\partial_{(x,\xi)}p_1(x,\xi)\right|}{\left|\partial_{(x,\xi)}p_2(x,\xi)\right|}.
    \end{equation}
\end{prop}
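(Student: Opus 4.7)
The plan is to translate the transfer matrix $T$ of $\mathscr{P}$ into $T_{\rm nf}$ by tracking how the Fourier integral operator $\mathcal{F} = \operatorname{diag}(F_1, F_2)$ intertwines the two WKB bases. By Theorem~\ref{Thm:Matrix-NF}, $\mathcal{F}^{-1}\mathscr{P}\mathcal{F} \equiv \operatorname{diag}(1,A)P_{\rm nf}$ microlocally near $((0,0),(0,0))$, and $A$ is microlocally invertible; therefore $\mathscr{P}u \equiv 0$ if and only if $v := \mathcal{F}^{-1}u$ solves $P_{\rm nf}v \equiv 0$. Decomposing $u \equiv \alpha_j^\dir w_j^\dir$ on $\gamma_j^\dir$ and $v \equiv \beta_j^\dir \tw_j^\dir$ on the analogous incoming/outgoing pieces $\tilde{\gamma}_j^\dir$ of the reduced characteristic sets, the identity $u \equiv \mathcal{F}v$ reduces everything to computing the proportionality constants $\mu_j$ relating $\mathcal{F}\tw_j^\dir$ to $w_j^\dir$ at leading order (up to a possible swap of incoming/outgoing, addressed below).

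Applying Proposition~\ref{Prop:FIO-WKB} to the scalar actions $F_1\cdot 1$ and $F_2(e^{iF(\cdot)/h}\cdot 1)$, with phases $\tilde{\phi}_1 \equiv 0$ and $\tilde{\phi}_2 = F$ satisfying the eikonal equations for $\kappa^*p_j$, produces WKB states $e^{i\phi_j(x)/h}\sigma_j(x)$ with leading amplitudes
\begin{equation*}
\sigma_j(0) = e^{i\pi \upsilon_j/4}\left|\frac{\partial_\xi(\kappa^*p_j)(0,0)}{\partial_\xi p_j(0,0)}\right|^{1/2}.
\end{equation*}
Using Lemma~\ref{Lemma:ClassicalNF} (which gives $\partial_\xi(\kappa^*p_1)(0,0) = 1$ and $\partial_\xi(\kappa^*p_2)(0,0) = 1/c$) together with the normalized amplitudes $\sigma_j^{\mathrm{norm}}(0) = \sqrt{|\partial_{(x,\xi)}p_j(0,0)|/|\partial_\xi p_j(0,0)|}$ from Proposition~\ref{prop:scalarmicsol}$(iii)$, one computes
\begin{equation*}
\mu_j = \frac{\sigma_j(0)}{\sigma_j^{\mathrm{norm}}(0)}, \qquad
\frac{\mu_1}{\mu_2} = e^{i\pi(\upsilon_1-\upsilon_2)/4}\sqrt{|c|/c'}.
\end{equation*}

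The orientation of $\kappa$ is controlled by the identity $H_{\kappa^*p_j}(0,0) = a_0(0,0)H_{p_j^{\rm nf}}(0,0)$ on the characteristic, where the factor is $1$ for $j=1$ and $1/c$ for $j=2$. Consequently $\kappa$ maps $\tilde{\gamma}_1^\dir$ to $\gamma_1^\dir$ unconditionally, while it maps $\tilde{\gamma}_2^\inc$ to $\gamma_2^\inc$ when $c>0$ and to $\gamma_2^\out$ when $c<0$ (and similarly with $\inc$ and $\out$ swapped). A short bookkeeping computation in each sign regime, combined with the formula for $\mu_j$ above and the fact that $t_{jk} = \BigO{h^{1/(m+1)}}$ for $j\neq k$, then gives at leading order
\begin{equation*}
T_{jj} \equiv 1, \qquad T_{12} \equiv \frac{\mu_1}{\mu_2}\,t_{12}, \qquad T_{21} \equiv \operatorname{sgn}(c)\,\frac{\mu_2}{\mu_1}\, t_{21},
\end{equation*}
modulo the error $\BigO{h^{2/(m+1)}\logdelt}$ inherited from Proposition~\ref{Prop:AsymptoticsOfTransferMatrix}; the claimed identity follows on substituting $\mu_1/\mu_2 = |c/c'|^{1/2}$.

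The main obstacle will be showing that the Maslov-type phase factor $e^{i\pi(\upsilon_1-\upsilon_2)/4}$ equals $1$. In the tangential case $m \geq 2$, $f'(0) = 0$ forces $\tilde{\phi}_j''(0) = 0$ for both $j$, so the two Hessians defining $\upsilon_1$ and $\upsilon_2$ are identical and the phase is trivial. In the transversal case $m=1$, the argument must exploit the specific form of the generating function $\psi$ of $\kappa$ (pinned down up to an additive constant by $\kappa^*p_1 = \xi$) to show that the two signatures nevertheless coincide; carrying out this verification carefully is the technical point that one must address.
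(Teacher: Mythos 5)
Your proposal follows essentially the same route as the paper: intertwine the two WKB bases via $\mathcal{F}$, compute the proportionality constants from Proposition~\ref{Prop:FIO-WKB}, account for the swap of $\tilde{\gamma}_2^\inc$ and $\tilde{\gamma}_2^\out$ when $c<0$, and combine with the asymptotics of $T_{\rm nf}$. The bookkeeping for the $c<0$ case and the ratio $\mu_1/\mu_2=|c/c'|^{1/2}$ are all correct, and your identification of the Maslov phase $e^{i\pi(\upsilon_1-\upsilon_2)/4}$ as the crux is exactly right.

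The genuine gap is that you leave the transversal case ($m=1$) unresolved, and the reason you are stuck is a conceptual misreading of the normal-form construction. You write that the generating function $\psi$ is ``pinned down up to an additive constant by $\kappa^*p_1=\xi$,'' but this is false: Darboux's theorem produces a whole family of symplectomorphisms with this property, and in the transversal case Lemma~\ref{Lemma:ClassicalNF} Step~1(2) deliberately composes $\kappa_0$ with a shear $\kappa_\lambda(x,\xi)=(x+\lambda\xi,\xi)$ precisely so that the extra condition \eqref{eq:Not-changed},
\[
\frac{\partial_\xi p_1(0,0)\,\partial_\xi p_2(0,0)}{\partial_\xi(\kappa^*p_1)(0,0)\,\partial_\xi(\kappa^*p_2)(0,0)}>0,
\]
holds. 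That condition is not an afterthought: it is what makes the Maslov index vanish. The paper's Lemma~\ref{lem:FIO-index} computes the determinant of the Hessian $\partial^2_{(y,\eta)}(F(y)-y\eta+\psi(0,\eta))|_{0}$ explicitly as
\[
-\frac{\partial_\xi(\kappa^*p_1)(0,0)}{\partial_\xi p_1(0,0)}\cdot\frac{\partial_\xi p_2(0,0)}{\partial_\xi(\kappa^*p_2)(0,0)},
\]
and the sign condition above forces this determinant to be negative, so both signatures $\upsilon_1$ and $\upsilon_2$ are zero (not merely equal). Without knowing that \eqref{eq:Not-changed} was built in, the transversal case cannot be closed — there exist choices of Darboux symplectomorphism for which $\upsilon_2\neq 0$, and your argument would then produce a spurious factor of $\pm i$. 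Your observation that the tangential case is automatic because $f'(0)=0$ makes the two Hessians coincide is correct, but the paper in fact uses the same Lemma~\ref{lem:FIO-index} determinant argument uniformly for both cases, with \eqref{eq:Not-changed} holding automatically when $m\ge 2$.
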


Once this proposition is proved, Theorem~\ref{Th:LSM} follows immediately from this with the asymptotic behavior of $T_{\rm nf}$ given in Proposition~\ref{Prop:AsymptoticsOfTransferMatrix}.
See Subsection~\ref{S:computation} for the detail of the computation.

\subsection{Proof of Proposition~\ref{prop:T-and-Tnf} when $c$ is positive}
In this subsection, we first prove Proposition~\ref{prop:T-and-Tnf} in the case of positive $c$.
The proof for the case of negative $c$ is provided in Subsection~\ref{SubS:negative-c}.

\begin{lemma}\label{lem:beta-with-pos-c}
    Assume that \eqref{eq:NoCaustic} holds and the constant $c$ introduced in Theorem \ref{Thm:Matrix-NF} $(iv)$ is positive. 
    Then, there exist constants $\beta_j^{\dir}\in\mathbb{C}$ such that
    \begin{equation}\label{eq:def-beta}
        \mathcal{F}\tw_j^\dir\equiv\beta_j^\dir w_j^\dir\quad\text{microlocally near }\gamma_j^\dir,
    \end{equation}
    for each $(j,\dir)\in\{1,2\}\times\{\inc,\out\}$.
    They admit the asymptotic formula
    \begin{align}\label{eq:beta-asympt}
        \beta_1^\dir=\left|\partial_{(x,\xi)} p_1(0,0)\right|^{-1/2}+\BigO{h},\qquad
        \beta_2^\dir=
        \left|c\partial_{(x,\xi)} p_2(0,0)\right|^{-1/2}+\BigO{h},
    \end{align}
    as $h\to+0$.
\end{lemma}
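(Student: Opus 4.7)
The plan is to prove this in three steps: existence of $\beta_j^\dir$ via microlocal uniqueness, explicit computation via Proposition~\ref{Prop:FIO-WKB}, and verification that the resulting Maslov-type phase is trivial.

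\textbf{Step 1 (Existence).} Since $\mc{F}^{-1}\mathscr{P}\mc{F} \equiv \operatorname{diag}(1, A)P_{\rm nf}$ microlocally near $((0,0),(0,0))$ and $P_{\rm nf}\tw_j^\dir \equiv 0$ on $\kappa^{-1}(\gamma_j^\dir)$, we get
\begin{equation*}
\mathscr{P}(\mc{F}\tw_j^\dir) \equiv \mc{F}\operatorname{diag}(1,A)P_{\rm nf}\tw_j^\dir \equiv 0 \quad \text{m.l. on } \gamma_j^\dir.
\end{equation*}
Since $\mc{F}$ quantizes $\kappa$ and $\tw_j^\dir$ is microlocalized on $\kappa^{-1}(\gamma_j^\dir)$, the family $\mc{F}\tw_j^\dir$ is microlocalized on $\gamma_j^\dir$. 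By the one-dimensionality of microlocal solutions along $\gamma_j^\dir$ (Proposition~\ref{Prop:matrixmicsol}(i)), there exists $\beta_j^\dir \in \Cc$ with $\mc{F}\tw_j^\dir \equiv \beta_j^\dir w_j^\dir$ microlocally near $\gamma_j^\dir$.

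\textbf{Step 2 (Computation via Proposition~\ref{Prop:FIO-WKB}).} Write the WKB expansions $\tw_j^\dir = e^{i\tilde\phi_j/h}\tilde\sigma_j\mathbf{e}_j + \BigO{h}$ with $\tilde\phi_1 = 0$, $\tilde\phi_2 = F$, and $\tilde\sigma_j(0) = 1$. Since $\mc{F} = \operatorname{diag}(F_1, F_2)$, the $\mathbf{e}_j$-component of $\mc{F}\tw_j^\dir$ equals $F_j(e^{i\tilde\phi_j/h}\tilde\sigma_j)$ up to $\BigOO{h}{L^2}$, while the other component is $F_{3-j}(\BigO{h}) = \BigOO{h}{L^2}$. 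The hypotheses of Proposition~\ref{Prop:FIO-WKB} apply with $p = p_j$, $\phi = \phi_j$, since $\tilde\phi_j$ satisfies $(\kappa^*p_j)(x,\tilde\phi_j'(x)) = 0$ by construction of the normal form and $\partial_\xi p_j(0,0) \neq 0$ by \eqref{eq:NoCaustic}. This gives $F_j(e^{i\tilde\phi_j/h}\tilde\sigma_j)(x) = e^{i\phi_j(x)/h}\sigma_j(x)$ with
\begin{equation*}
\sigma_j(0) = e^{i\pi\upsilon_j/4}\left|\frac{\partial_\xi(\kappa^*p_j)(0,0)}{\partial_\xi p_j(0,0)}\right|^{1/2} + \BigO{h}.
\end{equation*}
Comparing with the normalized $w_j^\dir \equiv e^{i\phi_j/h}\sigma_{j,j,0}\mathbf{e}_j + \BigO{h}$ where $\sigma_{j,j,0}(0) = \sqrt{|\partial_{(x,\xi)}p_j(0,0)|/|\partial_\xi p_j(0,0)|}$ (Propositions~\ref{prop:scalarmicsol}(iii) and \ref{Prop:matrixmicsol}(iii)), and using $\partial_\xi(\kappa^*p_1) \equiv 1$ together with $\partial_\xi(\kappa^*p_2)(0,0) = 1/c$, we obtain
\begin{equation*}
\beta_1^\dir = e^{i\pi\upsilon_1/4}\left|\partial_{(x,\xi)} p_1(0,0)\right|^{-1/2} + \BigO{h}, \qquad \beta_2^\dir = e^{i\pi\upsilon_2/4}\left|c\,\partial_{(x,\xi)} p_2(0,0)\right|^{-1/2} + \BigO{h}.
\end{equation*}

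\textbf{Step 3 (Vanishing of $\upsilon_j$).} It remains to prove $\upsilon_j \equiv 0 \pmod 8$. Writing $\psi$ for the generating function of $\kappa$, the Hessian of $\tilde\phi_j(y) - y\eta + \psi(0,\eta)$ at $(y,\eta)=(0,0)$ is
\begin{equation*}
H_j = \begin{pmatrix} \tilde\phi_j''(0) & -1 \\ -1 & \partial_\eta^2\psi(0,0) \end{pmatrix}.
\end{equation*}
For $j = 1$, $\tilde\phi_1'' = 0$ so $\det H_1 = -1$ and $\upsilon_1 = 0$. For $j = 2$ in the tangential case, Theorem~\ref{Thm:Matrix-NF}(iv) gives $f'(0) = \tilde\phi_2''(0) = 0$, hence $\det H_2 = -1$ and $\upsilon_2 = 0$. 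In the transversal case, $f'(0) = -c\{p_1,p_2\}(0,0) \neq 0$, and one computes $\partial_\eta^2\psi(0,0)$ from the explicit factorization $\kappa = \kappa_0 \circ \kappa_\lambda$ of Lemma~\ref{Lemma:ClassicalNF}: using $\partial_y X(0,0) = \partial_\xi p_1(0,0)$ (from $\kappa^*p_1=\xi$ and symplecticity) one gets $\partial_\eta^2\psi(0,0) = -\partial_\eta X(0,0)/\partial_\xi p_1(0,0)$. Under the sign constraint $c > 0$ together with \eqref{eq:Not-changed}, a direct computation then shows $f'(0)\partial_\eta^2\psi(0,0) < 1$, so $\det H_2 < 0$ and $\upsilon_2 = 0$.

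The main obstacle is Step 3 in the transversal case: while for $\tilde\phi_j'' = 0$ the vanishing of the signature is immediate from the off-diagonal entries of $H_j$, the transversal subcase requires explicit bookkeeping of the shear parameter $\lambda$ in $\kappa = \kappa_0 \circ \kappa_\lambda$ and of $\partial_\eta X_0(0,0)$ in the Darboux chart, together with the sign constraint \eqref{eq:Not-changed}, in order to rule out $\det H_2 > 0$.
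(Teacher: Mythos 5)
Your Steps 1 and 2 agree with the paper's proof: existence of $\beta_j^\dir$ via the one-dimensionality of the microlocal solution space, and the computation of its modulus via Proposition~\ref{Prop:FIO-WKB}. The gap is in Step 3, in the transversal case. After reducing $\upsilon_2$ to the sign of $\det H_2 = f'(0)\,\partial_\eta^2\psi(0,0) - 1$, you write that ``a direct computation then shows $f'(0)\partial_\eta^2\psi(0,0) < 1$,'' but this computation cannot be carried out with the ingredients you list. From $\kappa^*p_1 = \xi$ and symplecticity you correctly obtain $\partial_y X(0,0) = \partial_\xi p_1(0,0)$ and hence $\partial_\eta^2\psi(0,0) = -\partial_\eta X(0,0)/\partial_\xi p_1(0,0)$; however, the condition $\kappa^*p_1 = \xi$ alone does \emph{not} determine $\partial_\eta X(0,0)$ (the compatibility equations from $p_1(X,\Xi)=\eta$ together with $\det d\kappa = 1$ are automatically consistent and leave $\partial_\eta X(0,0)$ free). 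The factorization $\kappa = \kappa_0\circ\kappa_\lambda$ likewise only gives $\partial_\eta X(0,0) = \lambda\,\partial_\xi p_1(0,0) + \partial_\eta X_0(0,0)$ with $\partial_\eta X_0(0,0)$ still unknown, since the Darboux chart $\kappa_0$ is not explicit. What you are missing is a second relation tying $\partial_\eta X(0,0)$ to $p_2$ (equivalently to $c$). The paper supplies exactly this through Lemma~\ref{lem:FIO-index}: introducing the functions $g_j$ with $\kappa(g_j(x),*) = (x,\phi_j'(x))$, it shows $\det H_2 = -g_1'(0)/g_2'(0)$, and the identity $g_j'(0) = \partial_\xi(\kappa^*p_j)(0,0)/\partial_\xi p_j(0,0)$ from the proof of Proposition~\ref{Prop:FIO-WKB} (equation \eqref{eq:gderp}) then gives
\begin{equation*}
\det H_2 = -\frac{\partial_\xi(\kappa^*p_1)(0,0)}{\partial_\xi p_1(0,0)}\cdot\frac{\partial_\xi p_2(0,0)}{\partial_\xi(\kappa^*p_2)(0,0)},
\end{equation*}
whose negativity is precisely \eqref{eq:Not-changed}. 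This identity works uniformly in both transversal and tangential cases and requires no case split on $f'(0)$; without it (or an equivalent) your transversal case does not close.

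A secondary remark: the normalization of $\tw_2^\dir$ is $|\tilde v_2(0)|=1$ rather than the Definition~\ref{def:normalized} normalization, so the constant on the $\tw$-side entering \eqref{eq:betacal5} is $\tilde\sigma_0(0)=1$ as you wrote; this part is consistent with the paper.
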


\begin{proof}[Proof of Proposition~\ref{prop:T-and-Tnf} when $c>0$ using Lemma~\ref{lem:beta-with-pos-c}]
    The definition \eqref{eq:def-beta} of $\beta_j^\dir$ shows the identity
    \begin{equation}\label{eq:T-matrix-ori-nf}
    T=\operatorname{diag}(\beta_1^\out\ \beta_2^\out)T_{\rm nf}\operatorname{diag}(\beta_1^\inc\ \beta_2^\inc)^{-1}.
\end{equation}
According to Proposition~\ref{Prop:AsymptoticsOfTransferMatrix},
one has
\begin{equation*}
    T_{\rm nf}=I+\begin{pmatrix}
        0&t_{12}\\t_{21}&0
    \end{pmatrix}+\BigO{h^{\frac2{m+1}}\logdelt}.
\end{equation*}
We obtain Proposition~\ref{prop:T-and-Tnf} by substituting this formula and \eqref{eq:beta-asympt} into \eqref{eq:T-matrix-ori-nf}.
\end{proof}

We will apply Proposition~\ref{Prop:FIO-WKB} to prove Lemma~\ref{lem:beta-with-pos-c}. To do this, we need the following lemmas.

\begin{lemma}
Assume that \eqref{eq:NoCaustic} holds. Then, the projection \eqref{eq:projdiffeo} is a diffeomorphism near $(0,0)$
\end{lemma}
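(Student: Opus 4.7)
My plan is to verify the equivalent condition indicated in the footnote below~\eqref{eq:projdiffeo}: writing $\kappa(y,\eta) = (x(y,\eta), \xi(y,\eta))$, the $1 \times 1$ block $\partial_y x(0,0)$ of $d\kappa(0,0)$ must be nonzero.

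First I would exploit the defining identity from Lemma~\ref{Lemma:ClassicalNF}, namely $(\kappa^*p_1)(y,\eta) = \eta$ near $(0,0)$, i.e.\ $p_1(x(y,\eta),\xi(y,\eta)) = \eta$. Differentiating with respect to $y$ at $(0,0)$ yields
\begin{equation*}
    \partial_x p_1(0,0)\,\partial_y x(0,0) + \partial_\xi p_1(0,0)\,\partial_y \xi(0,0) = 0.
\end{equation*}
This single linear relation is not enough on its own, so I would then pair it with the symplectic condition $\det d\kappa(0,0) = 1$ (which in dimension one is the entirety of the symplectic constraint on a linear map), namely
\begin{equation*}
    \partial_y x(0,0)\,\partial_\eta \xi(0,0) - \partial_\eta x(0,0)\,\partial_y \xi(0,0) = 1.
\end{equation*}

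Assuming for contradiction that $\partial_y x(0,0) = 0$, the first identity combined with assumption~\eqref{eq:NoCaustic} ($\partial_\xi p_1(0,0) \neq 0$) forces $\partial_y \xi(0,0) = 0$. But then the left-hand side of the symplectic identity vanishes, contradicting its value of $1$. Hence $\partial_y x(0,0) \neq 0$, and by the footnote the projection is a local diffeomorphism at $(0,0)$. The only mild subtlety is remembering to couple the first-order information from $\kappa^*p_1 = \eta$ with the symplectic non-degeneracy of $d\kappa(0,0)$; neither fact suffices individually.
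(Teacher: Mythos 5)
Your proof is correct and follows essentially the same path as the paper: differentiate $\kappa^*p_1 = \eta$ in $y$ to get the linear relation, then invoke the nondegeneracy of $d\kappa(0,0)$ and the hypothesis $\partial_\xi p_1(0,0)\neq 0$. The paper organizes this as a case split on whether $\partial_y\xi(0,0)$ vanishes, whereas you argue by contradiction, but the two are just reorganizations of the same argument.
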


\begin{proof}
It suffices to prove that the $1\times 1$ block $\frac{\pa x}{\pa y}(0,0)$ in the derivative $d\kappa(0,0)$ is invertible. If $\frac{\pa \xi}{\pa y}(0,0)=0$, then we have $\frac{\pa x}{\pa y}(0,0)\neq 0$ since $d\kappa(0,0)$ is invertible. Thus we may assume $\frac{\pa \xi}{\pa y}(0,0)\neq 0$.
Differentiating the equation $(\kappa^*p_1)(y,\eta)=\eta$ with respect to $y$, we have
\begin{align*}
0=\frac{\pa x}{\pa y}(0,0)\partial_{x}p_1(0,0)+\frac{\pa \xi}{\pa y}(0,0)\partial_{\xi}p_1(0,0).
\end{align*}
Since $\partial_{\xi}p_1(0,0)\neq 0$ by \eqref{eq:NoCaustic}, we conclude $\frac{\pa x}{\pa y}(0,0)\neq 0$.
\end{proof}

In the next lemma, we compute the index $\upsilon$ appearing in Proposition~\ref{Prop:FIO-WKB} in our situation.

\begin{lemma}\label{lem:FIO-index}
    Put $F(y)=\int_0^yf(t)dt$. 
    Under \eqref{eq:NoCaustic} and \eqref{eq:Not-changed}, one has 
    \begin{align}
        &\operatorname{sgn}\partial^2_{(y,\eta)}(-y\cdot\eta+\psi(0,\eta))|_{(y,\eta)=(0,0)}=0,\label{eq:FIO-index1}\\
        &\operatorname{sgn}\partial^2_{(y,\eta)}(F(y)-y\cdot\eta+\psi(0,\eta))|_{(y,\eta)=(0,0)}=0. \label{eq:FIO-index2}   
    \end{align}
\end{lemma}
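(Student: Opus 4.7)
The plan is linear-algebraic, once $\psi_{\eta\eta}(0,0)$ is identified in terms of the Jacobian of $\kappa$. I would first write out both Hessians explicitly. In the variables $(y,\eta)$, the phase $-y\cdot\eta + \psi(0,\eta)$ has Hessian
$\begin{pmatrix} 0 & -1 \\ -1 & \psi_{\eta\eta}(0,0) \end{pmatrix}$
and the phase $F(y) - y\cdot\eta + \psi(0,\eta)$ has Hessian
$\begin{pmatrix} f'(0) & -1 \\ -1 & \psi_{\eta\eta}(0,0) \end{pmatrix}$,
since $F'(0)=f(0)=0$. For \eqref{eq:FIO-index1} the determinant is $-1<0$, so the two eigenvalues have opposite signs and the signature vanishes; no further input is required. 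The tangential case $m\geq2$ of \eqref{eq:FIO-index2} is also immediate because $f'(0)=0$ by \eqref{Eq:Derivative-f}.

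The substantive case is the transversal one ($m=1$), where $f'(0)\neq0$. Vanishing of the signature is equivalent to the determinant $f'(0)\psi_{\eta\eta}(0,0)-1$ being strictly negative, and I would prove this by rewriting it purely in terms of $p_1$, $p_2$ and then invoking \eqref{eq:Not-changed}. Write $\kappa(y,\eta) = (X(y,\eta),\Xi(y,\eta))$. From the defining identity $\kappa(\partial_\eta\psi(x,\eta),\eta) = (x,\partial_x\psi(x,\eta))$ one has $\partial_\eta\psi(x,\eta)=y(x,\eta)$, the inverse of $X$ in $y$ at fixed $\eta$, so differentiating $X(y(x,\eta),\eta)=x$ in $\eta$ gives
\begin{equation*}
    \psi_{\eta\eta}(0,0) = -\frac{X_\eta(0,0)}{X_y(0,0)}.
\end{equation*}
Next, differentiating $(\kappa^*p_1)(y,\eta) = p_1(X,\Xi) = \eta$ in $y$ and $\eta$ and combining with the symplectic identity $X_y\Xi_\eta - X_\eta\Xi_y = 1$ yields
\begin{equation*}
    X_y(0,0) = \partial_\xi p_1(0,0),\qquad \Xi_\eta(0,0) = \frac{1-\partial_x p_1(0,0)\,X_\eta(0,0)}{\partial_\xi p_1(0,0)}.
\end{equation*}
Plugging these into $\partial_\xi(\kappa^*p_2)(0,0) = \partial_x p_2(0,0)\,X_\eta(0,0) + \partial_\xi p_2(0,0)\,\Xi_\eta(0,0)$ and using $H_{p_1}p_2 = \{p_1,p_2\} = \partial_\xi p_1\,\partial_x p_2 - \partial_x p_1\,\partial_\xi p_2$ gives
\begin{equation*}
    \partial_\xi(\kappa^*p_2)(0,0) = \frac{X_\eta(0,0)\, H_{p_1}p_2(0,0) + \partial_\xi p_2(0,0)}{\partial_\xi p_1(0,0)}.
\end{equation*}

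Finally, combining this with $f'(0) = -H_{p_1}p_2(0,0)/\partial_\xi(\kappa^*p_2)(0,0)$ (from Theorem~\ref{Thm:Matrix-NF}$(iv)$ with $c = 1/\partial_\xi(\kappa^*p_2)(0,0)$) and with the formula for $\psi_{\eta\eta}(0,0)$, the product $f'(0)\psi_{\eta\eta}(0,0)$ collapses after eliminating $X_\eta(0,0)$ to
\begin{equation*}
    f'(0)\psi_{\eta\eta}(0,0) - 1 = -\frac{\partial_\xi p_2(0,0)}{\partial_\xi p_1(0,0)\,\partial_\xi(\kappa^*p_2)(0,0)}.
\end{equation*}
Multiplying numerator and denominator by $\partial_\xi p_1(0,0)$ and using $\partial_\xi(\kappa^*p_1)(0,0)=1$, the sign of the right-hand side equals $-\operatorname{sgn}\bigl(\partial_\xi p_1\,\partial_\xi p_2/[\partial_\xi(\kappa^*p_1)\,\partial_\xi(\kappa^*p_2)]\bigr)$, which is negative by \eqref{eq:Not-changed}. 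Hence the determinant is strictly negative, so the signature is $0$.

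No single step is hard; the main obstacle is bookkeeping of signs and Jacobian identities, in particular making sure the symplectic relation together with $(\kappa^*p_1)(y,\eta)=\eta$ is exploited to reduce everything to $X_\eta(0,0)$, which then cancels algebraically between $f'(0)$ and $\psi_{\eta\eta}(0,0)$.
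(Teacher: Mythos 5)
Your proof is correct and arrives at exactly the same determinant identity as the paper, namely
\[
\det\partial^2_{(y,\eta)}\bigl(F(y)-y\cdot\eta+\psi(0,\eta)\bigr)\big|_{(0,0)}
= -\frac{\partial_\xi(\kappa^*p_1)(0,0)}{\partial_\xi p_1(0,0)}\cdot\frac{\partial_\xi p_2(0,0)}{\partial_\xi(\kappa^*p_2)(0,0)},
\]
after using $\partial_\xi(\kappa^*p_1)(0,0)=1$. The difference is in how that formula is reached. The paper introduces the auxiliary functions $g_1,g_2$ parametrizing the preimages of the Lagrangians $\{\xi=\phi_j'(x)\}$ under $\kappa$, shows $\det = -g_1'(0)/g_2'(0)$ via the generating-function relation $\partial_\eta\psi(x,\eta)=g_j(x)$ on each Lagrangian, and then reuses $g_j'(0)=\partial_\xi(\kappa^*p_j)(0,0)/\partial_\xi p_j(0,0)$ (already established in the proof of Proposition~\ref{Prop:FIO-WKB} as \eqref{eq:gderp}). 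You instead compute $\psi_{\eta\eta}(0,0)=-X_\eta(0,0)/X_y(0,0)$ directly from the implicit definition of the generating function, then eliminate $X_\eta$ using the pullback identity for $p_1$ together with the symplectic constraint $X_y\Xi_\eta - X_\eta\Xi_y=1$, and finally invoke $f'(0)=-c\,H_{p_1}p_2(0,0)$ from Theorem~\ref{Thm:Matrix-NF}$(iv)$. Both arguments are of comparable length; yours is more self-contained (it does not lean on the $g_j$ machinery already set up), while the paper's avoids re-deriving Jacobian relations by reusing \eqref{eq:gderp}. Your observation that \eqref{eq:FIO-index2} is trivial for $m\ge2$ because $f'(0)=0$ is a nice simplification that the paper does not single out, though it is implicit in the general formula.
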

\begin{proof}
    We see that the Hessian matrix is computed as
    \begin{equation}\label{eq:FIO-indexpf1}
        \partial^2_{(y,\eta)}(F(y)-y\cdot\eta+\psi(0,\eta))|_{(y,\eta)=(0,0)}=
        \begin{pmatrix}
            f'(0)&-1\\-1&\partial_\eta^2\psi(0,0)
        \end{pmatrix}.
    \end{equation}
    The determinant of this matrix is $-1$ if $F$ vanishes identically. 
    This 
    proves \eqref{eq:FIO-index1}.  
For \eqref{eq:FIO-index2}, we \blue{will} calculate the determinant of \eqref{eq:FIO-indexpf1} as
\begin{align}\label{eq:FIO-indexpfid}
\det  \left(\partial^2_{(y,\eta)}(F(y)-y\cdot\eta+\psi(0,\eta))\right)|_{(y,\eta)=(0,0)}= -\frac{\partial_\xi(\kappa^*p_1)(0,0)}{\partial_\xi p_1(0,0)}\frac{\partial_\xi p_2(0,0)}{\partial_\xi(\kappa^*p_2)(0,0)}.
\end{align}
We observe that \eqref{eq:FIO-index2} 
is equivalent to this determinant being negative.
Then, \eqref{eq:FIO-index2} follows from \eqref{eq:Not-changed}, \eqref{eq:FIO-indexpfid}, and the elementary identity $\sgn (\frac{a}{b}\cdot\frac{c}{d})=\sgn (\frac{b}{a}\cdot\frac{c}{d})$.

In the following, we prove the identity \eqref{eq:FIO-indexpfid}.
Let $g_j=g_j(x)$ be the function defined near $x=0$ by
\begin{equation*}
\kappa(g_1(x),0)=(x,\phi_1'(x)),\quad
\kappa(g_2(x),f(g_2(x)))=(x,\phi_2'(x)).
\end{equation*}
We observe that $g_1(0)=g_2(0)=0$ holds 
because
$\kappa(0,0)=(0,0)$ and \eqref{eq:p_jeikonal}.
We claim that
\begin{align}\label{eq:FIO-indexpf2}
\det\partial^2_{(y,\eta)}(F(y)-y\cdot\eta+\psi(0,\eta))=-\frac{g_1'(0)}{g_2'(0)}
\end{align}
holds.
The definition of the generating function, $
        \kappa(\partial_\eta\psi(x,\eta),\eta)=(x,\partial_x\psi(x,\eta))$,
    implies that one has $g_1(x)=\partial_\eta\psi(x,0)$ and $g_2(x)=\partial_\eta\psi(x,f(g_2(x))$.
    By differentiating in $x$, we obtain
    \begin{equation*}
        g_1'(x)=\partial_x\partial_\eta\psi(x,0),\quad
        g_2'(x)=\partial_x\partial_\eta\psi(x,f(g_2(x)))+g_2'(x)f'(g_2(x))\partial_\eta^2\psi(x,f(g_2(x)).
    \end{equation*}
    We in particular deduce that $g_1'(0)=\partial_x\partial_\eta \psi(0,0)$ from the first identity, and
    \begin{equation*}
f'(0)\partial_\eta^2\psi(0,0)-1=-\frac{g_1'(0)}{g_2'(0)}        
    \end{equation*}
where we use $g_1(0)=g_2(0)=0$ and $f(0)=0$. This with \eqref{eq:FIO-indexpf1} proves \eqref{eq:FIO-indexpf2}.
    
By the same calculation as \eqref{eq:gderp}, 
\begin{align}\label{eq:g1g2at0}
g_j'(0)=\frac{\partial_\xi(\kappa^*p_j)(0,0)}{\partial_\xi p_j(0,0)}\quad \text{for}\quad j=1,2.
\end{align}
Combining \eqref{eq:FIO-indexpf2} with \eqref{eq:g1g2at0}, we obtain \eqref{eq:FIO-indexpfid}.
\end{proof}

\begin{proof}[Proof of Lemma~\ref{lem:beta-with-pos-c}]
Let $\tilde{\gamma}_j^{\dir}$ ($(j,\dir)\in\{1,2\}\times\{\inc,\out\}$) be the four connected components of $(\{\xi=0\}\cap \{\xi=f(x)\})\setminus \{0\}$. Our assumption $c=(\partial_\xi (\kappa^*p_2)(0,0))^{-1}>0$ implies $\kappa(\tilde{\gamma}_j^{\dir})=\gamma_j^{\dir}$ in a neighborhood of $(0,0)$.
Since $\tw_j^{\dir}$ and $w_j^{\dir}$ are WKB solutions associated with $\tilde{\gamma}_j^{\dir}$ and $\gamma_j^{\dir}$ respectively, the uniqueness of the microlocal solutions on each $\gamma_j^\dir$ ($(j,\dir)\in
\{1,2\}\times\{\inc,\out\}$) (Proposition~\ref{Prop:matrixmicsol} $(i)$) implies that we can find constants $\beta_j^{\dir}$ satisfying \eqref{eq:def-beta}. Therefore, it suffices to prove the asymptotic formula \eqref{eq:beta-asympt} of $\beta_{j}^{\dir}$.

By Proposition~\ref{Prop:matrixmicsol} $(ii)$ and $(iii)$, we have
\begin{align}
&w_1^{\dir}(x)=e^{\frac{i}{h}\phi_1(x)}\begin{pmatrix}
c_1^{\dir}\\ 0
\end{pmatrix}+\BigO{h},\quad w_2^{\dir}(x)=e^{\frac{i}{h}\phi_2(x)}\begin{pmatrix}
0\\ c_2^{\dir}
\end{pmatrix}+\BigO{h}\quad \text{with} \label{eq:betacal1}\\
&c_j^{\dir}=\left|\partial_{(x,\xi)} p_j(0,0)\right|^{\frac{1}{2}}\left|\partial_\xi p_j(0,0)\right|^{-\frac{1}{2}}. \label{eq:betacal2}
\end{align}
Since $\tw_1^{\dir}={}^t(\mathbbm{1},0)+\BigO{h}$ and $\tw_2^{\dir}=e^{\frac{i}{h}F(\cdot)}\cdot{}^t(0,\mathbbm{1})+\BigO{h}$, Proposition \ref{Prop:FIO-WKB} with Lemma \ref{lem:FIO-index} implies
\begin{align}
\mathcal{F}\tw_1^\dir\equiv
\begin{pmatrix}
F_1(\mathbbm{1})\\
0
\end{pmatrix}+\BigO{h},\quad
\mathcal{F}\tw_2^\dir\equiv
\begin{pmatrix}
0\\
F_2(e^{\frac{i}{h}F(\cdot)})
\end{pmatrix}+\BigO{h}\label{eq:betacal3}
\end{align}
for $\dir=\inc,\out$. Since $F_1$ and $F_2$ are microlocally unitary Fourier integral operators in the sense of Definition \ref{def:FIO} $(iii)$, Proposition \ref{Prop:FIO-WKB} (with $\tilde{\phi}_1(x)=0$ and $\tilde{\phi}_2(x)=F(x)$) implies
\begin{align}
&F_1(\mathbbm{1})(x)=\sigma_1(x,h)e^{\frac{i}{h}\phi_1(x)},\quad F_2(e^{\frac{i}{h}F(\cdot)})=\sigma_2(x,h)e^{\frac{i}{h}\phi_2(x)}\quad \text{with}\label{eq:betacal4}\\
&\sigma_j(0,h)=\left|\frac{\partial_\xi(\kappa^*p_j)(0,0)}{\partial_\xi p_j(0,0)}\right|^{1/2}+\BigO{h}\quad (j=1,2),\label{eq:betacal5}
\end{align}
where the index $\upsilon$ appearing in Proposition \ref{Prop:FIO-WKB} is equal to $0$ by virtue of Lemma \ref{lem:FIO-index}. By \eqref{eq:def-beta}, \eqref{eq:betacal1}, \eqref{eq:betacal3}, and \eqref{eq:betacal4}, it must hold that
\begin{align*}
\beta_j^{\dir}c_j^{\dir}=\sigma_j(0,h)+\BigO{h^{\infty}}.
\end{align*}
Combining this with \eqref{eq:betacal2} and \eqref{eq:betacal5}, we obtain
\begin{align*}
\beta_j^{\dir}=|\partial_\xi(\kappa^*p_j)(0,0)|^{\frac{1}{2}} |\partial_{(x,\xi)} p_j(0,0)|^{-\frac{1}{2}}+\BigO{h}.
\end{align*}
Now we recall that $\partial_\xi(\kappa^*p_1)(0,0)=1$ and $c=(\partial_\xi(\kappa^*p_2)(0,0))^{-1}$, which shows \eqref{eq:beta-asympt}.

\end{proof}

\subsection{Proof of Proposition~\ref{prop:T-and-Tnf} when $c$ is negative}\label{SubS:negative-c}

Let us consider the case with negative $c$.
The major difference is the fact that the Hamiltonian flow induced by the Hamiltonian $\xi-f(x)$ has the inverse direction to that by $\kappa^*p_2(x,\xi)=a_0(x,\xi)(\xi-f(x))$ when $c=a_0(0,0)^{-1}$ is negative.

\begin{lemma}\label{lem:beta-with-neg-c}
    Assume $c<0$. 
    There exist constants $\beta_1^\inc$, $\til{\beta}_2^\inc$, $\beta_1^\out$, and $\til{\beta}_2^\out$ such that
    \begin{equation}\label{eq:def-beta2}
        \mathcal{F}\tw_1^\dir\equiv\beta_1^\dir w_1^\dir\quad \text{microlocally near }\gamma_1^\dir,\qquad(\dir=\inc,\out)
    \end{equation}
    and
    \begin{equation}\label{eq:def-beta3}
        \mathcal{F}\tw_2^\inc\equiv\til{\beta}_2^\out w_2^\out\quad \text{microlocally near }\gamma_2^\out,\qquad
        \mathcal{F}\tw_2^\out\equiv\til{\beta}_2^\inc w_2^\inc\quad \text{microlocally near }\gamma_2^\inc.
    \end{equation}
    They admit the asymptotic formula
    \begin{align}\label{eq:beta-asympt2}
        \beta_1^\dir=\left|\partial_{(x,\xi)} p_1(0,0)\right|^{-1/2}+\BigO{h},\qquad
        \til{\beta}_2^\dir=
        \left|c\partial_{(x,\xi)} p_2(0,0)\right|^{-1/2}+\BigO{h},
    \end{align}
    as $h\to+0$.
\end{lemma}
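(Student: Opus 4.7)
The proof will mirror that of Lemma \ref{lem:beta-with-pos-c}, with the essential novelty being that the sign of $c$ controls the orientation with which $\kappa$ identifies the characteristic curves. I would proceed in three steps.

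First, I would establish the direction statement: $\kappa(\tilde{\gamma}_1^{\dir}) = \gamma_1^{\dir}$ for $\dir \in \{\inc,\out\}$, while $\kappa(\tilde{\gamma}_2^{\inc}) = \gamma_2^{\out}$ and $\kappa(\tilde{\gamma}_2^{\out}) = \gamma_2^{\inc}$. For the first component this is immediate since $\kappa^{*}p_1(x,\xi)=\xi$ carries no sign flip. For the second, I would write $\kappa^{*}p_2(x,\xi) = a_0(x,\xi)(\xi-f(x))$ as in Lemma \ref{Lemma:ClassicalNF} and observe that on the characteristic $\{\xi = f(x)\}$ one has $H_{\kappa^{*}p_2} = a_0(x,f(x))\, H_{\xi - f(x)}$, so when $a_0(0,0) = c^{-1} < 0$ the Hamiltonian flow of $\kappa^{*}p_2$ runs opposite to that of $\xi-f(x)$. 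Since $\tilde{\gamma}_2^{\dir}$ is defined via the flow of $\xi - f(x)$ and $\gamma_2^{\dir}$ via the flow of $p_2$, the incoming and outgoing branches are swapped by $\kappa$.

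Second, I would invoke the one-dimensionality of the space of microlocal WKB solutions on each $\gamma_j^{\dir}$ (Proposition \ref{Prop:matrixmicsol} $(i)$) to define the constants $\beta_1^{\dir}$ and $\tilde{\beta}_2^{\dir}$. Since $\mathcal{F}\tw_2^{\inc}$ is a microlocal solution of $\mathscr{P}u=0$ whose wavefront set lies in $\kappa(\tilde{\gamma}_2^{\inc}) = \gamma_2^{\out}$, it must be proportional to $w_2^{\out}$; this justifies the swapped labeling in \eqref{eq:def-beta3}. The same argument produces the other three constants.

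Third, the asymptotics \eqref{eq:beta-asympt2} will follow from Proposition \ref{Prop:FIO-WKB} exactly as in the proof of Lemma \ref{lem:beta-with-pos-c}, applied to $F_1$ acting on the constant function $\mathbbm{1}$ and $F_2$ acting on $e^{\frac{i}{h}F(\cdot)}$. The key verification is that the Maslov-type index $\upsilon$ provided by Lemma \ref{lem:FIO-index} still vanishes when $c<0$: writing the determinant \eqref{eq:FIO-indexpfid} as $-c\,\partial_\xi p_2(0,0)/\partial_\xi p_1(0,0) = -c\,\partial_\xi p_1(0,0)\partial_\xi p_2(0,0)/(\partial_\xi p_1(0,0))^2$, the assumption $c\,\partial_\xi p_1(0,0)\partial_\xi p_2(0,0) > 0$ from Theorem \ref{Thm:Matrix-NF} $(iii)$ forces this quantity to be strictly negative regardless of the sign of $c$, so $\upsilon = 0$. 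The amplitude formula \eqref{eq:WKBFIO} then produces $|\partial_\xi(\kappa^{*}p_2)(0,0)/\partial_\xi p_2(0,0)|^{1/2} = |c|^{-1/2}|\partial_\xi p_2(0,0)|^{-1/2}$, which combines with the normalization constant $c_2 = |\partial_{(x,\xi)} p_2(0,0)|^{1/2}|\partial_\xi p_2(0,0)|^{-1/2}$ from Proposition \ref{Prop:matrixmicsol} $(iii)$ to yield $\tilde{\beta}_2^{\dir} = |c\,\partial_{(x,\xi)} p_2(0,0)|^{-1/2} + \BigO{h}$. The formula for $\beta_1^{\dir}$ is unchanged from the $c>0$ case since the first component is insensitive to the sign of $c$.

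The main obstacle will be the careful bookkeeping in the first step: reading off unambiguously which end of $\tilde{\gamma}_2$ maps to which end of $\gamma_2$, and ensuring that the corresponding WKB amplitudes (which in Proposition \ref{Prop:matrixmicsol} $(ii)$ may differ across connected components) are correctly paired. Once this direction swap is pinned down via the single identity $H_{\kappa^{*}p_2} = a_0\, H_{\xi-f(x)}$ on the characteristic set, the amplitude calculation is insensitive to the sign of $c$ thanks to the absolute values, and the conclusion follows.
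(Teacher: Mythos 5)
Your proposal is correct and follows the same structure as the paper's proof: (1) observe that $c<0$ swaps the incoming/outgoing branches of the second characteristic under $\kappa$, (2) use the one-dimensionality of the solution space (Proposition~\ref{Prop:matrixmicsol} $(i)$) to define the constants with the swapped labeling, and (3) run the same FIO computation as in Lemma~\ref{lem:beta-with-pos-c}. The only differences are that you supply an explicit mechanism for the swap (the identity $H_{\kappa^*p_2}=a_0\,H_{\xi-f(x)}$ on the characteristic, which the paper asserts without argument) and that you re-verify that the index $\upsilon$ vanishes when $c<0$; the latter is actually already covered by Lemma~\ref{lem:FIO-index}, whose hypotheses \eqref{eq:NoCaustic} and \eqref{eq:Not-changed} are independent of $\operatorname{sgn}c$, so no fresh check is needed — though spelling it out does no harm.
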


\begin{proof}
Let $\tilde{\gamma}_j^{\dir}$ ($(j,\dir)\in\{1,2\}\times\{\inc,\out\}$) be four connected components of $(\{\xi=0\}\cap \{\xi=f(x)\})\setminus \{0\}$. Our assumption $c=(\partial_\xi (\kappa^*p_2)(0,0))^{-1}<0$ implies 
\begin{align*}
\kappa(\tilde{\gamma}_1^{\dir})=\gamma_1^{\dir},\quad \kappa(\tilde{\gamma}_2^{\inc})=\gamma_2^{\out},\quad \kappa(\tilde{\gamma}_2^{\out})=\gamma_2^{\inc}
\end{align*}
in a neighborhood of $(0,0)$. This is the main difference from the case $c>0$. From these relations, we obtain \eqref{eq:def-beta2} and \eqref{eq:def-beta3} with constants $\beta_1^{\dir}$ and $\tilde{\beta}_2^{\dir}$. The asymptotic formula \eqref{eq:beta-asympt2} of $\beta_{j}^{\dir}$ and $\tilde{\beta}_2^{\dir}$ can be proved similarly to the proof of Lemma~\ref{lem:beta-with-pos-c}.
\end{proof}

\begin{proof}[Proof of Proposition~\ref{prop:T-and-Tnf} when $c<0$]
    Let $w$ satisfy the microlocal Cauchy problem \eqref{eq:msPCauchy} for the original operator $\scP$ with the initial condition given by $\alpha_1^\inc$, $\alpha_2^\inc$, and let $\alpha_1^\out$ and $\alpha_2^\out$ be the outgoing data \eqref{eq:outgoing-data} of $w$.
    By the definition \eqref{eq:def-beta} of $\beta_j^\dir$ and the transfer matrix, 
    we have shown
    the identity
    \begin{equation*}
\begin{pmatrix}\alpha_1^\out\\\alpha_2^\inc\end{pmatrix}
        =\operatorname{diag}(\beta_1^\out\ \til{\beta}_2^\inc)
        T_{\rm nf}\diag(\beta_1^\inc\ \til{\beta}_2^\out)^{-1}
        \begin{pmatrix}\alpha_1^\inc\\\alpha_2^\out\end{pmatrix},\qquad \begin{pmatrix}
            \alpha_1^\out\\\alpha_2^\out
        \end{pmatrix}
        =T
        \begin{pmatrix}
            \alpha_1^\inc\\\alpha_2^\inc
        \end{pmatrix}.
    \end{equation*}
    An algebraic computation\footnote{
    The identity
    \begin{equation*}
        \begin{pmatrix}
            x\\w
        \end{pmatrix}
        =
        \begin{pmatrix}
            a_{11}&a_{12}\\
            a_{21}&a_{22}
        \end{pmatrix}
        \begin{pmatrix}
            y\\z
        \end{pmatrix}
        \iff
        \begin{pmatrix}
            y\\z
        \end{pmatrix}
        =
        \begin{pmatrix}
            a^{11}&a^{12}\\
            a^{21}&a^{22}
        \end{pmatrix}
        \begin{pmatrix}
            x\\w
        \end{pmatrix}
    \end{equation*}
    with $(a^{jk})_{j,k}=(a_{jk})_{j,k}^{-1}\in\mathcal{M}_2(\Cc)$ implies $z=a^{21}x+a^{22}w=a^{21}(a_{11}y+a_{12}z)+a^{22}w$. This gives $z=(1-a_{12}a^{21})^{-1}(a^{21}a_{11}y+a^{22}w)$.
    By substituting this identity into $x=a_{11}y+a_{12}z$, $x$ is also written 
    as a
    linear combination of $y$ and $w$.
    Then we obtain
    \begin{equation*}
        \begin{pmatrix}
            x\\z
        \end{pmatrix}
        =\frac1{1-a_{12}a^{21}}
        \begin{pmatrix}
            a_{11}&a_{12}a^{22}\\
            a^{21}a_{11}&a^{22}
        \end{pmatrix}
        \begin{pmatrix}
            y\\w
        \end{pmatrix}.
    \end{equation*}
    } shows 
    \begin{equation}\label{eq:T-matrix-ori-nf2}
        T=\operatorname{diag}(\beta_1^\out\ \til{\beta}_2^\out)
        \frac1{1-t^{21}t_{12}}
        \begin{pmatrix}
            t_{11}&t_{12}t^{22}\\
            t^{21}t_{11}&t^{22}
        \end{pmatrix}
        \operatorname{diag}(\beta_1^\inc\ \til{\beta}_2^\inc)^{-1},
    \end{equation}
    where $t_{jk}$ are the $(j,k)$-entry of  $T_{\rm nf}$, and $t^{jk}$ are those of $T_{\rm nf}^{-1}$.
Now we observe that
\begin{align*}
\frac1{1-t^{21}t_{12}}
\begin{pmatrix}
t_{11}&t_{12}t^{22}\\
t^{21}t_{11}&t^{22}
\end{pmatrix}=I+\begin{pmatrix}
0&t_{12}\\t_{21}&0
\end{pmatrix}+\BigO{h^{\frac2{m+1}}\logdelt}
\end{align*}
hold since one has
\begin{equation*}
T_{\rm nf}=I+\begin{pmatrix}
0&t_{12}\\t_{21}&0
\end{pmatrix}+\BigO{h^{\frac2{m+1}}\logdelt},\,\, T_{\rm nf}^{-1}=I-\begin{pmatrix}
0&t_{12}\\t_{21}&0
\end{pmatrix}+\BigO{h^{\frac2{m+1}}\logdelt}
\end{equation*}
and $t_{12},t_{21}=\BigO{h^{\frac{1}{m+1}}}$ according to Proposition~\ref{Prop:AsymptoticsOfTransferMatrix} and the Neumann series expansion.
We obtain Proposition~\ref{prop:T-and-Tnf} by substituting these formulae and \eqref{eq:beta-asympt2} into \eqref{eq:T-matrix-ori-nf2}.
\end{proof}

\subsection{Computation of the asymptotics}\label{S:computation}
We provide an outline of the computation of the asymptotics of the transfer matrix given in Theorem~\ref{Th:LSM}.
It is deduced by combining the formulae written in Theorem~\ref{Thm:Matrix-NF}, Propositions~\ref{Prop:AsymptoticsOfTransferMatrix}, \ref{prop:T-and-Tnf},
and the following lemma.
\begin{lemma}\label{lem:prelim-for-compute}
    One has
    \begin{equation}\label{eq:sign-s-c}
        s=\operatorname{sgn}\,c,
    \end{equation}
    where $s$ and $c$ are defined in \eqref{eq:def-s-tau-mu} and Theorem~\ref{Thm:Matrix-NF} (iv), respectively.
    One also has
    \begin{equation}\label{eq:sym-Hp12}
        H_{p_1}^mp_2(0,0)=-c^{m-1}H_{p_2}^mp_1(0,0).
    \end{equation}
\end{lemma}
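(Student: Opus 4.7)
\textbf{Proof proposal for Lemma~\ref{lem:prelim-for-compute}.}

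The plan is to prove the two assertions independently: \eqref{eq:sign-s-c} by a sign comparison relying on \eqref{eq:Not-changed}, and \eqref{eq:sym-Hp12} by an explicit computation in the normal coordinates provided by Theorem~\ref{Thm:Matrix-NF}. The first part is the only delicate one, since it requires a small detour to handle the possibility that $\partial_\xi p_j(0,0)$ vanishes in the transversal case.

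For \eqref{eq:sign-s-c}, the tangential case $m \geq 2$ is immediate from Theorem~\ref{Thm:Matrix-NF}~(v), which writes $c$ as the product of $s$ and the strictly positive quantity $|\partial_{(x,\xi)} p_1(0,0)|/|\partial_{(x,\xi)} p_2(0,0)|$. For the transversal case $m = 1$, I would start from the definition \eqref{eq:def-Theta} of $\Theta(p_j)$ to obtain the closed form
\begin{equation*}
    \tau(\Theta(p_j)) = \frac{\operatorname{sgn}(\partial_\xi p_j(0,0))}{|\partial_{(x,\xi)} p_j(0,0)|}\, H_{p_j}(0,0),
\end{equation*}
valid when $\partial_\xi p_j(0,0) \neq 0$, so that $(H_{p_j}(0,0), \tau(\Theta(p_j)))_{\Rr^2} = \operatorname{sgn}(\partial_\xi p_j(0,0))\, |H_{p_j}(0,0)|$. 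Combined with the identities $\kappa^* p_1 = \xi$ (hence $\partial_\xi(\kappa^* p_1)(0,0) = 1$) and $\partial_\xi(\kappa^* p_2)(0,0) = c^{-1}$, the sign invariance \eqref{eq:Not-changed} then yields $s = \operatorname{sgn}(\partial_\xi p_1(0,0)\, \partial_\xi p_2(0,0)) = \operatorname{sgn}(c)$. The residual case where some $\partial_\xi p_j(0,0) = 0$ will be handled by replacing the original symbols by $\kappa_\theta^* p_j$ for a small $\theta > 0$ (making $\partial_\xi(\kappa_\theta^* p_j)(0,0) \neq 0$) and using that, by the convention $\Theta(p_j) = -\pi/2$, the function $\theta \mapsto \Theta(\kappa_\theta^* p_j) - \theta$ is left-continuous at $\theta = 0^+$, as encoded in \eqref{eq:ind-theta}.

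For \eqref{eq:sym-Hp12}, the plan is to compute both iterated Hamiltonian derivatives via Jacobi's identity $\kappa^* H_f = H_{\kappa^* f}$ (\cite[Theorem 2.10]{Zwo}) in the normal form $\kappa^* p_1(x,\xi) = \xi$ and $\kappa^* p_2(x,\xi) = a_0(x,\xi)(\xi - f(x))$, with $a_0(0,0) = c^{-1}$ and $f$ vanishing to order $m$ at $0$. For the left-hand side, $H_{\xi} = \partial_x$ and the Leibniz rule give
\begin{equation*}
    H_{p_1}^m p_2(0,0) = \partial_x^m \bigl(a_0(x,\xi)(\xi - f(x))\bigr) \Big|_{(0,0)} = -\,a_0(0,0)\, f^{(m)}(0) = -c^{-1} f^{(m)}(0),
\end{equation*}
the other Leibniz terms vanishing because $f^{(j)}(0) = 0$ for $j < m$. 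For the right-hand side, I would interpret $H_{p_2}^m p_1(0,0) = H_{a_0(\xi - f)}^m(\xi)|_{(0,0)}$ as the $m$-th time derivative at $t = 0$ of the $\xi$-component along the Hamiltonian flow of $a_0(\xi - f)$ starting at $(0,0)$. Since this flow stays on $\{\xi = f(x)\}$ where its $x$-component satisfies $\dot x = a_0(x, f(x))$ with $\dot x(0) = a_0(0,0) = c^{-1}$, the Taylor expansion
\begin{equation*}
    f(x(t)) = \frac{f^{(m)}(0)}{m!}\,(c^{-1} t)^m + O(t^{m+1})
\end{equation*}
yields $H_{p_2}^m p_1(0,0) = c^{-m} f^{(m)}(0)$. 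Combining the two computations gives $-c^{m-1} H_{p_2}^m p_1(0,0) = -c^{-1} f^{(m)}(0) = H_{p_1}^m p_2(0,0)$, as desired. The main obstacle in the whole argument is the limiting case of \eqref{eq:sign-s-c} just discussed; the rest is a direct calculation once the normal form is in place.
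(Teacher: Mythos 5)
Your proof is correct. The proof of \eqref{eq:sign-s-c} matches the paper's argument in all essentials: both invoke \eqref{eq:Not-changed} to identify $\operatorname{sgn} c$ with $\operatorname{sgn}(\partial_\xi p_1(0,0)\,\partial_\xi p_2(0,0))$ when \eqref{eq:NoCaustic} holds, identify the latter with $s$ by noting that $\tau(\Theta(p_j))$ is a positive multiple of $H_{p_j}(0,0)$ in that regime, and dispose of the case $\partial_\xi p_j(0,0)=0$ by a small rotation $\kappa_\theta$ together with the invariance of $(H_{p_{j,\theta}}(0,0),\tau(\Theta(p_{j,\theta})))_{\Rr^2}$. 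Invoking Theorem~\ref{Thm:Matrix-NF}~(v) for the tangential case is a harmless shortcut.

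For \eqref{eq:sym-Hp12}, however, you take a genuinely different route. The paper argues algebraically: using $\kappa^*p_1=\xi$ it writes $H_{p_2}^kp_1 = -H_{\kappa^*p_2}^{k-1}\partial_x(\kappa^*p_2)$ and observes that at $(0,0)$, because $\partial_x^\ell(\kappa^*p_2)(0,0)=0$ for $1\le\ell\le m-1$, only the $(\partial_\xi\kappa^*p_2)\partial_x$-part of $H_{\kappa^*p_2}$ survives when iterated $m-1$ times, giving $H_{p_2}^mp_1(0,0)=-c^{-(m-1)}\partial_x^m(\kappa^*p_2)(0,0)=-c^{-(m-1)}H_{p_1}^mp_2(0,0)$. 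You instead use the explicit factorization $\kappa^*p_2=a_0\cdot(\xi-f)$ from Lemma~\ref{Lemma:ClassicalNF}: a Leibniz expansion gives $H_{p_1}^mp_2(0,0)=-c^{-1}f^{(m)}(0)$, and you read off $H_{p_2}^mp_1(0,0)$ as the $m$-th time-derivative at $t=0$ of $\xi(t)=f(x(t))$ along the Hamiltonian flow of $\kappa^*p_2$ restricted to its zero level set, where $\dot x(0)=a_0(0,0)=c^{-1}$ makes the factor $c^{-m}$ transparent. The flow argument is more geometric and avoids tracking which pieces of the iterated vector field vanish at the origin; the paper's version is more self-contained in that it uses only $\kappa^*p_1=\xi$ and Condition~\ref{C:ContactOrder} without appealing to the factorization. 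Both routes yield the same constants, and the final combination $-c^{m-1}\cdot c^{-m}f^{(m)}(0)=-c^{-1}f^{(m)}(0)$ in your proposal checks out.
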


\begin{proof}
\underline{Proof of \eqref{eq:sign-s-c}:}
    According to the condition \eqref{eq:Not-changed}, the sign of $c$ coincides with that of the product $(\partial_\xi p_1(0,0))(\partial_\xi p_2(0,0))$ under the condition \eqref{eq:NoCaustic} (the product does not vanish).
    By definition of $\Theta(p_j)\in[-\pi/2,\pi/2)$, the vector $\tau(\Theta(p_j))$ is the unit vector parallel to the Hamiltonian vector $H_{p_j}(0,0)$.
    This implies that the sign of $(H_{p_j}(0,0),\tau(\Theta(p_j)))_{\Rr^2}$ coincides with that of $(\cos(\Theta(p_j)))\partial_\xi p_j(0,0)$ if the latter does not vanish.
    Note that the condition \eqref{eq:NoCaustic}: $\partial_\xi p_j(0,0)\neq0$  is equivalent to $\Theta(p_j)\neq-\pi/2$.
    This condition also implies $\cos(\Theta(p_j))>0$, and consequently \eqref{eq:sign-s-c}.

    Otherwise, we use the rotation map $\kappa_\theta$ as in Proposition~\ref{Prop:matrixmicsol}.
    Recall that the value 
    \begin{equation*}
        (H_{p_{j,\theta}}(0,0),\tau(\Theta(p_{j,\theta})))_{\Rr^2}=(H_{p_j}(0,0),\tau(\Theta(p_j)))_{\Rr^2},\quad(p_{j,\theta}:=\kappa_\theta^*p_j)
    \end{equation*}
    is invariant for small $\theta\ge0$.
    For small $\theta>0$, the equality $\Theta(p_{j,\theta})=\Theta(p_j)+\theta$ implies $\partial_\xi p_{j,\theta}(0,0)\neq0$.
    This reduces the problem to the case which is already treated above.    

    \noindent
    \underline{Proof of \eqref{eq:sym-Hp12}:}
    By employing $\kappa$ such that $\kappa^*p_1(x,\xi)=\xi$, one can easily check for each $k\ge1$ that 
    \begin{equation}\label{eq:Hp-many-times}
        \begin{aligned}
            &H_{p_1}^kp_2=\partial_x^k(\kappa^*p_2),\\
            &H_{p_2}^kp_1=-H_{\kappa^*p_2}^{k-1}\partial_x(\kappa^*p_2)
            =-[(\partial_\xi(\kappa^*p_2))\partial_x]^{k-1}\partial_x(\kappa^*p_2).
        \end{aligned}
    \end{equation}
    By the definition of the contact order, $H_{p_1}^k p_2(0,0)=\partial_x^k(\kappa^*p_2)(0,0)=0$ holds for $1\le k\le m-1$.
    Plugging this into the second equality of \eqref{eq:Hp-many-times}, one obtains
    \begin{equation*}
        H_{p_2}^mp_1(0,0)=-c^{-(m-1)}\partial_x^m(\kappa^*p_2)(0,0)=-c^{-(m-1)}H_{p_1}^mp_2(0,0).
    \end{equation*}
\end{proof}

\begin{proof}[Proof of Theorem~\ref{Th:LSM}]

Due to Proposition \ref{prop:scalarmicsol} $(iv)$ and the definition, the transfer matrix $T$ is invariant under the conjugation of the metaplectic operator $M_{\theta}$ for sufficiently small $\theta>0$. Moreover, we can find sufficiently small $\theta>0$ such that
the symbol $p_{j,\theta}:=\kappa_{\theta}^*p_j$ of $M_{\theta}^{-1}P_jM_{\theta}$ satisfies \eqref{eq:NoCaustic} by Condition \ref{C:RealPrincipal}, where $\kappa_{\theta}$ is the rotation defined in \eqref{eq:rotation}.
Thus, we may assume \eqref{eq:NoCaustic}.

As in Theorem~\ref{Thm:Matrix-NF} (iv), $r_1(0,0)$ and $r_2(0,0)$ coincide with $r_1(0,0)$ and $cr_2(0,0)$ modulo $\BigO{h}$.
Then by combining Propositions~\ref{Prop:AsymptoticsOfTransferMatrix} and \ref{prop:T-and-Tnf}, the anti-diagonal entries of the transfer matrix $T$ are given by $q_1(0,0)\omega_1$ and $q_2(0,0)\omega_2$ modulo $\BigO{h}$, where $\omega_1$ and $\omega_2$ are defined by
\begin{equation}\label{eq:omega12}
    \omega_1=\left|\frac{c}{c'}\right|^{\frac12}\omega,\quad\omega_2=|cc'|^{\frac12}\overline{\omega}.
\end{equation}
Here, $\omega$ is defined by \eqref{Eq:omega} in terms of the $m$-th derivative at $x=0$ of the function $f(x)$ appearing in the normal form.
According to the formula \eqref{Eq:Derivative-f} in Theorem~\ref{Thm:Matrix-NF}, this function $f$ satisfies $f^{(m)}(0)=-cH^m_{p_1}p_2(0,0)$.
Then $\omega$ is rewritten as
\begin{equation*}
    \omega=2|c|^{-\frac1{m+1}}\mu_m\left(\frac{-\operatorname{sgn}(sH_{p_1}^mp_2(0,0))\pi}{2(m+1)}\right)\bm{\Gamma}\left(\frac{m+2}{m+1}\right)\left(\frac{(m+1)!}{|H_{p_1}^mp_2(0,0)|}\right)^{\frac1{m+1}}.
\end{equation*}
Here, we also applied \eqref{eq:sign-s-c}.
The asymptotic formula for $\omega_1$ given in Theorem~\ref{Th:LSM} follows from this with \eqref{eq:omega12} and the definition of $c$ and $c'$. 
More concretely, one obtains the identity $|c/c'|^{1/2}|c|^{-1/(m+1)}=|c'|^{-1/(m+1)}$ with $c'=|\partial_{(x,\xi)}p_1(0,0)|/|\partial_{(x,\xi)}p_2(0,0)|$ which is obvious for $m=1$, and is implied by $|c|=c'$ in the tangential case $m\ge2$ (see Theorem~\ref{Thm:Matrix-NF} (v)).

For the formula of $\omega_2$, we also use \eqref{eq:sym-Hp12}.
The difference between $\omega_1$ and $\omega_2$ caused by the complex conjugate only appears in $\mu_m$ and $\overline{\mu_m}$ since the other factors are real.
Moreover, $\mu_m$ is real when $m$ is even.
When $m$ is odd, one has $\overline{\mu_m(\theta)}=\mu_m(-\theta)$ on one hand, and 
\begin{equation*}
    H_{p_1}^mp_2(0,0)=-c^{m-1}H_{p_2}p_1(0,0)=-|c|^{m-1}H_{p_2}p_1(0,0),
\end{equation*}
on the other hand.
We conclude here that 
\begin{equation*}
    \overline{\mu_m\left(\frac{-\operatorname{sgn}(sH_{p_1}^mp_2(0,0))\pi}{2(m+1)}\right)}=\mu_m\left(\frac{-\operatorname{sgn}(sH_{p_2}^mp_1(0,0))\pi}{2(m+1)}\right).
\end{equation*}

Concerning the absolute value, \eqref{eq:omega12} implies 
\begin{equation*}
    |\omega_2|=|c'||\omega_1|.
\end{equation*}
The equality \eqref{eq:sym-Hp12} shows 
\begin{equation*}
    |c'|\left(\frac{|\partial_{(x,\xi)}p_2(0,0)|}{|\partial_{(x,\xi)}p_1(0,0)|}\frac{(m+1)!}{|H_{p_1}^mp_2(0,0)|}\right)^{\frac1{m+1}}
    =\left(\frac{|\partial_{(x,\xi)}p_1(0,0)|}{|\partial_{(x,\xi)}p_2(0,0)|}\frac{(m+1)!}{|H_{p_2}^mp_1(0,0)|}\right)^{\frac1{m+1}}.
\end{equation*}
This is the only factor of $\omega_2$ different from $\omega_1$ other than $\mu_m$.
\end{proof}
        	\appendix

\section{The stationary phase method for a degenerate phase}

\begin{lemma}\label{Lemma:degst}
Let $I \subset \Rr$ be a bounded, $h$-independent interval. Let $F \in C^\infty(I,\Rr)$ be a function whose derivative vanishes nowhere but at $x=0$, at a finite order $m \geq 1$. Let $a \in C^\infty(I,\Cc)$. 
Then
\begin{equation}\label{Eq:degstEstimate}
\int_I a(y)e^{\frac{i}{h}F(y)}dy \leq C \left[ h^{\frac{1}{m+1}} \norm{a}{L^\infty(I)} + h^{\frac{2}{m+1}}\log(1/h)^{\delta_{m,1}}\norm{a'}{L^\infty(I)} \right]
\end{equation}
where $\delta_{m,1}$ is the Kronecker delta. 
Moreover, if $J \Subset I \cap (0,+\infty)$ is $h$-independent, then for all $I \ni x_0 < 0$ and $x \in J$,
\begin{multline}\label{Eq:degstExpansion}
	\int_{x_0}^x a(y)e^{\frac{i}{h}(F(y)-F(0))}dy \\
	=2\mu_m\left(\frac{\sgn(F^{(m+1)}(0))\pi}{2(m+1)}\right)
	\pmb\Gamma\left(\frac{m+2}{m+1}\right) 
	\left(\frac{m+1}{|F^{(m+1)}(0)|}\right)^{\frac{1}{m+1}}
	a(0)h^{\frac{1}{m+1}}
	+ \BigOO{h^{\frac{2}{m+1}}}{L^\infty(J)}
\end{multline}
where $\pmb\Gamma$ is Euler's Gamma function and $\mu_m$ is defined by 
$\mu_m(\theta) := 2^{-1}(e^{i\theta} + e^{i(-1)^{m+1}\theta})$ for $\theta$ in $\Rr$.
\end{lemma}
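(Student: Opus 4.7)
The plan is to prove the pointwise bound \eqref{Eq:degstEstimate} first by a careful splitting and one integration by parts, then derive the expansion \eqref{Eq:degstExpansion} by a Morse-type change of variables reducing the phase to a pure power.

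For \eqref{Eq:degstEstimate}, I would fix $\delta>0$ small enough that $[-\delta,\delta]\subset I$ and $F'$ vanishes only at $0$ on $I$, then split
\begin{equation*}
    \int_I a(y)e^{\frac{i}{h}F(y)}dy=\int_{|y|\le h^{1/(m+1)}}+\int_{h^{1/(m+1)}<|y|\le\delta}+\int_{I\setminus[-\delta,\delta]}.
\end{equation*}
The innermost region gives at most $2h^{1/(m+1)}\|a\|_{L^\infty}$ trivially. The outermost region has $F'$ bounded away from zero, so one non-stationary integration by parts gives $\BigO{h\|a\|_{W^{1,\infty}}}$, which is absorbed in the remainder. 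In the middle annulus I would perform one integration by parts using $e^{iF/h}=\frac{h}{iF'}\partial_y e^{iF/h}$. Since $|F'(y)|\asymp|y|^m$ near $0$, the boundary contributions at $|y|=h^{1/(m+1)}$ are $\BigO{h^{1/(m+1)}\|a\|_{L^\infty}}$. The interior splits into a term with $a'/F'$ bounded by $h\|a'\|_\infty\int_{h^{1/(m+1)}}^\delta y^{-m}dy$, which gives $\BigO{h^{2/(m+1)}\|a'\|_{L^\infty}}$ for $m\ge 2$ and $\BigO{h\log(1/h)\|a'\|_{L^\infty}}$ for $m=1$; and a term with $aF''/F'^2\asymp a/y^{m+1}$ giving $\BigO{h^{1/(m+1)}\|a\|_{L^\infty}}$. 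Summing yields \eqref{Eq:degstEstimate}.

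For \eqref{Eq:degstExpansion}, I would localize at $0$ via a cutoff $\chi\in C_c^\infty((-\delta,\delta))$ equal to $1$ near $0$. On $(1-\chi)$, repeated integration by parts (using that $x\in J\Subset(0,+\infty)$ is separated from the critical point and from $x_0$) yields a $\BigO{h}$ contribution, absorbed in the remainder since $2/(m+1)\le 1$. For the main piece, write $F(y)-F(0)=\frac{F^{(m+1)}(0)}{(m+1)!}y^{m+1}(1+g(y))$ with $g$ smooth and $g(0)=0$, and apply the substitution $t=y(1+g(y))^{1/(m+1)}$, which is a smooth diffeomorphism on $[-\delta,\delta]$ (shrinking $\delta$ if needed) with $dt/dy|_0=1$. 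Setting $\varepsilon=\sgn F^{(m+1)}(0)$ and $c_0=|F^{(m+1)}(0)|/(m+1)!$, the integral becomes
\begin{equation*}
    \int_{-\delta'}^{\delta''}\tilde\chi(t)\tilde a(t)e^{i\varepsilon c_0 t^{m+1}/h}dt,\qquad \tilde a(0)=a(0).
\end{equation*}

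Rescaling $s=(c_0/h)^{1/(m+1)}t$, the integral takes the form $(h/c_0)^{1/(m+1)}\int_{\Rr}\tilde a((h/c_0)^{1/(m+1)}s)\,e^{i\varepsilon s^{m+1}}ds$ with truncation error $\BigO{h^\infty}$ from the tails (by non-stationary integration by parts, since $\partial_s(s^{m+1})\ne 0$ for $|s|$ large). Taylor-expanding $\tilde a((h/c_0)^{1/(m+1)}s)=a(0)+\BigO{(h/c_0)^{1/(m+1)}|s|}$ and integrating, the correction gives $\BigO{h^{2/(m+1)}}$. The leading-order constant is $a(0)(h/c_0)^{1/(m+1)}\int_\Rr e^{i\varepsilon s^{m+1}}ds$, and the standard contour-deformation identity $\int_0^\infty e^{i\alpha t^n}dt=|\alpha|^{-1/n}e^{i\sgn(\alpha)\pi/(2n)}\pmb\Gamma(1+1/n)$ combined with the parity of $s^{m+1}$ yields $\int_\Rr e^{i\varepsilon s^{m+1}}ds=2\mu_m(\varepsilon\pi/(2(m+1)))\pmb\Gamma((m+2)/(m+1))$, where the two cases (even versus odd $m+1$) produce exactly the $\mu_m$ function defined via the $(-1)^{m+1}$ symmetry.

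The main obstacle will be bookkeeping the uniformity in $x\in J$ and ensuring that the substitution $t=y(1+g(y))^{1/(m+1)}$ is genuinely smooth and invertible (rather than only $C^{k}$); this requires noting that $(1+g(y))^{1/(m+1)}$ extends smoothly through $y=0$ since $1+g(y)>0$ near $0$. A secondary technical point is the uniform control of the truncation error in $s$: one must verify that the boundary terms arising in the non-stationary integration by parts applied to the tails of $\int e^{i\varepsilon s^{m+1}}ds$ decay polynomially in the truncation radius and thus contribute only $\BigO{h^\infty}$ after rescaling. All other steps reduce to routine applications of Taylor expansion and the already established estimate \eqref{Eq:degstEstimate}.
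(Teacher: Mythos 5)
Your argument follows the same two-step route the paper cites from Stein (one integration by parts for \eqref{Eq:degstEstimate}; a smooth Morse-type change of variable reducing the phase to $c\,t^{m+1}$, rescaling, and a Fresnel contour integral for \eqref{Eq:degstExpansion}), and the details you fill in for the three-region splitting and the substitution $t=y(1+g(y))^{1/(m+1)}$ are sound. One remark: your derivation correctly yields $\bigl((m+1)!/|F^{(m+1)}(0)|\bigr)^{1/(m+1)}$ in the leading coefficient, whereas the lemma as printed has $(m+1)$ in place of $(m+1)!$; a test with $F(y)=y^{3}/6$, $m=2$, confirms $(m+1)!$ is the correct constant, and it is the one that reproduces the $\omega$ of \eqref{Eq:omega} when the lemma is applied in Proposition \ref{prop:normbdd}, so the printed statement appears to contain a typo that your proof avoids.
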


\begin{proof}
The estimate \eqref{Eq:degstEstimate} comes from an integration by parts. 
For the expansion \eqref{Eq:degstExpansion}, perform an $h$-independent change of variable on $\int e^{\frac{i}{h}y^{m+1}}dy$. The last integral is computed with the residue theorem (Fresnel-type integral). We send the reader to \cite[Chap. VIII]{Ste} for more details.
\end{proof}

\section{Construction of WKB solutions for the reduced equation}\label{App:WKB}

We consider the reduced operator of Section \ref{S:CFReduced} defined by
\begin{equation*}
    P=P_{\rm nf}=\begin{pmatrix}
        hD_x&hR_1\\hR_2&hD_x-f(x)
    \end{pmatrix}.
\end{equation*}
Let $\Omega$ be a neighborhood of $(0,0)$. Let $\rho_j^\dir \in \gamma_j^\dir$ for $\dir = \inc,\out$ and $j = 1,2$. We define WKB solutions near each of these four points. Here, $\gamma_j^\inc$ denotes $\Gamma_j \cap \{\xi < 0\}$ and $\gamma_j^\out$ denotes $\Gamma_j \cap \{\xi > 0\}$ as illustrated on Figure \ref{Fig:Crossing}.

\begin{figure}[h]
    \begin{minipage}{\linewidth}
    {\centering
    \includegraphics[width=\linewidth,page=2]{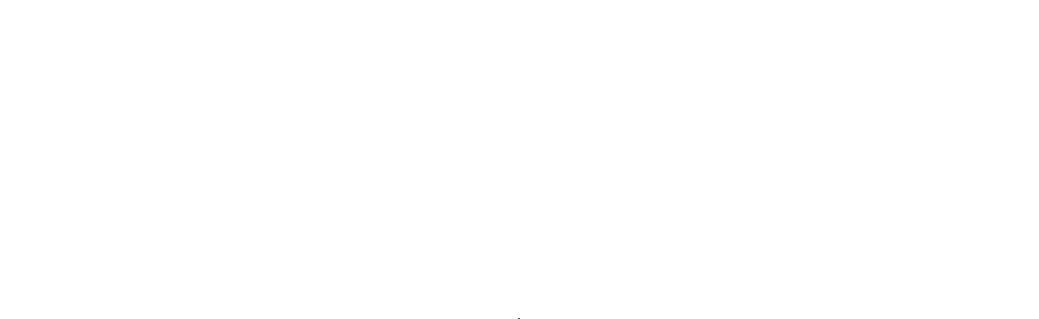}
    \caption{Crossing of Hamiltonian flows for the reduced case}\label{Fig:Crossing}
    }
    \end{minipage}
\end{figure}

\begin{prop}\label{prop:modelWKBconst}
Let $I^\inc \Subset (-\infty,0)$ and $I^\out \Subset (0,+\infty)$ be intervals such that $\rho_j^\dir \in I^\dir \times \Rr_\xi$. For each $j = 1,2$ and $\dir = \inc,\out$ there exist a microlocal WKB solution $\tw_j^\dir$ of $Pu=0$ near $\rho_j^\dir$ satisfying
\begin{equation}\label{Eq:WKBOrderh}
    \tw_1^\dir(x) =
    \begin{pmatrix}
    1 + \BigOO{h}{C^\infty_b}\\ \BigOO{h}{C^\infty_b}
    \end{pmatrix}
    \stext{and}
    \tw_2^\dir(x) = e^{\frac{i}{h}F(x)}
    \begin{pmatrix}
    \BigOO{h}{C^\infty_b}\\ 1 + \BigOO{h}{C^\infty_b}
    \end{pmatrix} \quad \text{for } x \in I^\dir,
\end{equation}
where $F(x)=\int_0^x f(y)dy$. Moreover, 
\begin{align*}
\mathrm{WF}_h(\tw_1^\dir(x)) \subset \{(x,0)\in T^*\Rr\mid x\in \Rr\},\quad \mathrm{WF}_h(\tw_2^\dir(x)) \subset \{(x,f(x)\mid x\in \Rr)\}
\end{align*}
by Example \ref{ex:WF}.

\end{prop}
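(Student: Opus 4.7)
The construction is a classical semiclassical WKB analysis carried out separately on each interval $I^\dir$, on which $f$ is nowhere zero. Since $I^\dir$ is disjoint from the crossing $\{x=0\}$, the local characteristic sets $\gamma_1^\dir \subset \{\xi=0\}$ and $\gamma_2^\dir \subset \{\xi=f(x)\}$ are well separated there, and the standard WKB ansatz---no oscillating phase for the solution microlocalized on $\gamma_1^\dir$ and phase $e^{iF(x)/h}$ for the one microlocalized on $\gamma_2^\dir$---is available.

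For $\tw_1^\dir$, take the ansatz $\tw_1^\dir(x) = {}^t(\sigma(x,h), \eta(x,h))$ with formal series $\sigma \sim \sum_{k\geq 0} h^k \sigma_k$ and $\eta \sim \sum_{k\geq 0} h^k \eta_k$ in $C^\infty(I^\dir)$. Substituting into $P\tw_1^\dir = 0$ and expanding $R_j$ applied to smooth non-oscillating amplitudes as an $h$-series via the symbolic calculus, the second row at order $h^0$ forces $f(x)\eta_0 = 0$, hence $\eta_0 \equiv 0$; the first row at order $h^1$ gives $\sigma_0' = 0$, so set $\sigma_0 \equiv 1$; the second row at order $h^1$ yields $f(x)\eta_1 = r_2(x,0)$, which is solvable on $I^\dir$ since $f$ does not vanish there. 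The higher-order coefficients are determined inductively, each $\sigma_k$ from a first-order transport ODE and each $\eta_{k+1}$ by algebraic division by $f$ from lower-order data, all in $C_b^\infty(I^\dir)$.

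For $\tw_2^\dir = e^{iF(x)/h}\,{}^t(\tilde\eta(x,h), \tilde\sigma(x,h))$, the identity $(hD_x - f(x))(e^{iF/h}\tilde\sigma) = e^{iF/h}\, hD_x \tilde\sigma$ together with Proposition~\ref{Prop:conjugatePDO} on the conjugation of $R_j$ by $e^{iF/h}$ reduce $P\tw_2^\dir = 0$ to
\[
\begin{pmatrix} (hD_x + f(x))\tilde\eta + h(R_1)_f \tilde\sigma \\ hD_x \tilde\sigma + h(R_2)_f \tilde\eta \end{pmatrix} = 0 \quad \text{modulo } O(h^\infty),
\]
where $(R_j)_f$ has principal symbol $r_j(x, \xi + f(x))$. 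The analogous order-by-order analysis produces $\tilde\sigma_0 \equiv 1$, $\tilde\eta_0 \equiv 0$, $\tilde\eta_1 = -r_1(x,f(x))/f(x)$, and all higher coefficients recursively.

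Once the formal series are in hand, Borel's theorem realizes $\sigma, \eta, \tilde\sigma, \tilde\eta$ as genuine smooth functions in $C_b^\infty(\Rr)$, after multiplying by a smooth cutoff equal to one on $I^\dir$ to extend globally. By construction $P\tw_j^\dir = O_{C_b^\infty(I^\dir)}(h^\infty)$, hence $P\tw_j^\dir \equiv 0$ microlocally near $\rho_j^\dir$, and \eqref{Eq:WKBOrderh} is immediate from the leading coefficients. The wavefront set statement follows from Example~\ref{ex:WF}: for $\tw_1^\dir$ the amplitudes carry no oscillating phase, so $\operatorname{WF}_h(\tw_1^\dir) \subset \{\xi = 0\}$, while for $\tw_2^\dir$ the phase $F$ satisfies $F'(x) = f(x)$, giving $\operatorname{WF}_h(\tw_2^\dir) \subset \{(x, f(x))\}$. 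The only technical point to monitor is that the symbolic calculus expansions of $R_j$ applied to non-oscillating and $e^{iF/h}$-oscillating amplitudes remain in $C_b^\infty(I^\dir)$, which is immediate since $f$ is bounded away from zero on $I^\dir$ and no degenerate stationary phase ever enters.
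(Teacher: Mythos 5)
Your proposal is correct and follows essentially the same route as the paper: eikonal equation for the phase, an inductive solution of the transport equations (algebraic division by $f$ for one component, integration for the other, valid because $f$ does not vanish on $I^\dir$), Borel summation, and the wavefront set identification via Example~\ref{ex:WF}. The only cosmetic differences are that you pre-conjugate by $e^{iF/h}$ to simplify the system for $\tw_2^\dir$ rather than computing $e^{-i\phi/h}Pe^{i\phi/h}$ directly, and that you use a cutoff on the amplitudes whereas the paper assumes, without loss of generality, a support restriction on the kernels of $R_j$; both devices accomplish the same localization.
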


\begin{proof}
The proof is more or less standard and is known as the WKB construction.
The construction of the four solutions ($\dir = \inc,\out$ and $j = 1,2$) is very similar, so we only construct $\tw_2^\inc$. Without loss of generality,
we may assume that the integral kernel $k_j$ of $R_j$ satisfies $\supp(k_j) \Subset I^\inc \times I^\inc$. We look for $\tw_2^\inc$ under the WKB form $\tw_2^\inc(x) \sim e^{\frac{i}{h}\phi(x)}\sum_{k=0}^\infty \tw_k(x)h^k$ where the amplitude $\tw_k(x) = (\tw_{k,1}(x),\tw_{k,2}(x))$ is vector-valued. 
We have, for a fixed $N \in \Nn$,
\begin{align}\label{Eq:Pw(x)}
    e^{-\frac{i}{h}\phi(x)}P e^{\frac{i}{h}\phi(x)}\sum_{k=0}^Nh^k \tw_k(x) &= 
    \begin{pmatrix}
        \phi'(x)\tw_{0,1}(x)\\
        (\phi'(x) - f(x))\tw_{0,2}(x)
    \end{pmatrix}\nonumber\\
    &+ \sum_{k=1}^{N} h^k 
    \begin{pmatrix}
         \phi'(x)\tw_{k,1}(x) -i\tw_{k-1,1}'(x)
        + (R_1)_{f}  \tw_{k-1,2}(x)   \\
        (\phi'(x) - f(x))\tw_{k,2} -i\tw_{k-1,2}'(x)(x) 
        + (R_2)_{f} \tw_{k-1,1}(x) 
    \end{pmatrix}\nonumber\\
    &+ h^{N+1}\begin{pmatrix}
         -i\tw_{N,1}'(x)
        + (R_1)_{f}  \tw_{N,2}(x)   \\
        -i\tw_{N,2}'(x)(x) 
        + (R_2)_{f} \tw_{N,1}(x)
    \end{pmatrix}.
\end{align}
Here, $(R_j)_{f}$ is the pseudo-differential operator obtained from the conjugation of $R_j$ by $e^{\frac{i}{h}\phi(x)}$ and given by Lemma \ref{Prop:conjugatePDO}. Remark that the integral kernel of $(R_j)_{f}$ is $e^{\frac{i}{h}(\phi(y) - \phi(x))}k_j(x,y)$, which is also compactly supported in $I^\inc \times I^\inc$. For $\tw_2^\inc$ to be a microlocal solution, we require the terms for $0 \leq k \leq N$ of the above to vanish. We may assume $\tw_0\neq0$. A possible choice of $\phi$ and $\tw_0$ for the first term to vanish is
\begin{equation}\label{Eq:phiAndw0}
    \phi(x) := \int_0^x f(y)dy \stext{and} (\tw_{0,1}(x),\tw_{0,2}(x)) := (0,1) \text{ for all } x \in \Rr.
\end{equation}
The equation $\phi'-f=0$ satisfied by $\phi$ is called the \textit{eikonal equation} associated with $\Gamma_2 $\footnote{Considering the eikonal equation $\phi'=0$ associated with $\Gamma_1$ would yield a construction of $\tw_1^\inc$.}. The system of \textit{transport equations} is obtained by requiring the terms for $1 \leq k \leq N$ in the sum of \eqref{Eq:Pw(x)} to vanish. It is solved by
amplitudes $\tw_k \in C^\infty(I^\inc)$ defined inductively by
\begin{equation}\label{Eq:AmplitudesWKB2inc}
    \text{For all } x \in I^\flat, \; \begin{cases}
        \tw_{k+1,1}(x) &:= f(x)^{-1}\left(i\tw_{k,1}'(x) - (R_1)_{f} \tw_{k,2}(x)  \right)\vspace{0.2cm}\\ 
        \tw_{k+1,2}(x) &:= -i  \int_{x_0}^x (R_2)_{f} \tw_{k+1,1}(y)dy
    \end{cases},
\end{equation}
where $x_0 \in I^\inc$ is any point in $I^\inc$. 
We now define via a Borel summation (a slightly modified version of \cite[Theorem 4.15]{Zwo}) a smooth function $\tw_2^\inc \sim e^{\frac{i}{h}\phi(x)}\sum_{k=0}^\infty \tw_k(x)h^k$. Finally, using the support properties of $\tw_k$ combined with \cite[Theorem 8.13]{Zwo}, we show that $\tw_2^\inc$ is indeed a microlocal solution of $Pu = 0$ near $\rho_2^\inc$.
\end{proof}

    \end{spacing}
\end{document}